\theoremstyle{plain}
\newtheorem{theorem}{Theorem}[section]
\newtheorem{corollary}[theorem]{Corollary}
\newtheorem{lemma}[theorem]{Lemma}
\newtheorem{remark}[theorem]{Remark}
\newtheorem{proposition}[theorem]{Proposition}
\theoremstyle{remark}
\newtheorem*{example-it}{Example}
\newcommand{\ab}{\operatorname{ab}}
\newcommand{\Conj}{\operatorname{Conj}}
\newcommand{\id}{\operatorname{id}}
\newcommand{\im}{\operatorname{im}}
\newcommand{\Jac}{\operatorname{Jac}}
\newcommand{\pr}{\operatorname{pr}}
\newcommand{\tr}{\operatorname{tr}}
\newcommand{\Tr}{\operatorname{Tr}}
\newcommand{\ver}{\operatorname{ver}}
\newcommand{\cO}{\mathcal{O}}
\newcommand{\G}{\mathcal{G}}
\newcommand{\R}{\mathcal{R}}
\newcommand{\Z}{\mathcal{Z}}
\newcommand{\Q}{{\mathbb{Q}}}
\begin{document}

\title[{{$K_1$ of certain Iwasawa algebras, after Kakde}}]{\textbf{$K_1$ of certain Iwasawa algebras, \\ after Kakde}}
\author{Peter Schneider and Otmar Venjakob}
\address{Universit\"{a}t M\"{u}nster,  Mathematisches Institut,  Einsteinstr. 62,
48291 M\"{u}nster,  Germany,
 http://www.uni-muenster.de/math/u/schneider/ }%
\email{pschnei@uni-muenster.de }%

\address{Universit\"{a}t Heidelberg,  Mathematisches Institut,  Im Neuenheimer Feld 288,  69120
Heidelberg,  Germany,
 http://www.mathi.uni-heidelberg.de/$\,\tilde{}\,$venjakob/}
\email{venjakob@mathi.uni-heidelberg.de}
\date{June 15, 2011}
\maketitle

This paper contains a detailed exposition of the content of section five in Kakde's paper \cite{Kak}. We proceed in a slightly more axiomatic way to pin down the exact requirements on the $p$-adic Lie group under consideration. We also make use of our conceptual theory of the completed localization of an Iwasawa algebra as developed in \cite{SV1}. This simplifies some of the arguments. Otherwise, with the exception of the notation at certain places, we follow Kakde's paper.

Let $\G$ be a one dimensional compact $p$-adic Lie group and $\Lambda(\G)$ its Iwasawa algebra. The main purpose is to establish a description of the $K$-group $K_1(\Lambda(\G))$ in terms of the groups of units $\Lambda(U)^\times$ for a suitable system of abelian subquotients $U$ of $\G$. The strategy for achieving this relies on the following commutative diagram
\begin{equation*}
    \xymatrix{
       1  \ar[r] & \mu(\mathcal{O}) \times \G^{\ab} \ar[d]_{=} \ar[r]^-{\iota} & K'_1(\Lambda(\G)) \ar[d]_{\theta} \ar[r]^-{L} & \mathcal{O}[[\Conj(\G)]] \ar[d]_{\beta}^{\cong} \ar[r]^-{\omega} & \G^{\ab} \ar[d]_{=} \ar[r] & 1  \\
     1 \ar[r] & \mu(\mathcal{O}) \times \G^{\ab} \ar[r]^-{\theta\circ \iota} & \Phi \ar[r]^-{\mathcal{L}} & \Psi \ar[r]^-{\omega\circ\beta^{-1}} & \G^{\ab} \ar[r] & 1 }
\end{equation*}
which to explain in detail is not necessary at this point. We only mention that
$\mathcal{O}[[\Conj(\G)]]$ is an additive version of $K_1(\Lambda(\G))$, $L$ is the integral
logarithm of Oliver and Taylor, and $\Phi$ and $\Psi$ are the description we want to achieve and
its additive version, respectively. The upper row is exact by work of Oliver (and Fukaya/Kato). In
a first step (section \ref{sec:additive}) the somewhat easier additive isomorphism $\beta$ will be
established. Then it will be a major point to define the map $\mathcal{L}$ which makes the middle
square commutative. Finally it will be shown that the lower row is exact as well. For both see
section \ref{sec:multiplicative}. Once all of this is done it is a formal consequence that the map
$\theta$ is an isomorphism. Some of this will be generalized to the completed localization $B(\G)$
of $\Lambda(\G)$, additively in section \ref{sec:additive} and multiplicatively in section
\ref{sec:multiplicative-B}. The latter requires an extension of the integral logarithm to the
$B(\G)$-setting (section \ref{sec:log-B}).

\section{Skew Laurent series}

All Iwasawa algebras under consideration will have coefficients in the ring of integers $\mathcal{O}$ of a finite extension of ${\Q}_p$. We fix a prime element $\pi \in \mathcal{O}$. Let $\mathcal{G}$ be any compact $p$-adic Lie group. We \textit{assume} that $\mathcal{G}$ contains a closed normal subgroup $H$ such that $\mathcal{G} / H \cong {\mathbb{Z}}_p$.

In order to give a description of $\Lambda (\mathcal{G})$ relative to $\Lambda (H)$ we choose a closed subgroup $\Gamma \subseteq \mathcal{G}$ such that $\Gamma \xrightarrow{\; \cong \;} \mathcal{G} / H$ and we pick a topological generator $\gamma$ of $\Gamma$ (in particular, $\mathcal{G} = H \rtimes \Gamma$). We then have the ring automorphism
\begin{align*}
\sigma: \Lambda (H) & \longrightarrow \Lambda (H)\\
a & \longmapsto \gamma a \gamma^{-1}
\end{align*}
as well as the left $\sigma$-derivation $\delta : = \sigma - \id$ on $\Lambda (H)$. We recall that the latter means that
\begin{equation*}
\delta (ab) = \delta (a) b + \sigma (a) \delta (b) \qquad \textrm{for any} \ a, b \in \Lambda (H) \ .
\end{equation*}

\begin{proposition}\label{skew-power}
(Venjakob) We have the isomorphism
\begin{equation*}
\Lambda (H) [[ t; \sigma, \delta ]] = \{ \sum_{i \ge 0 } a_i t^i : a_i \in \Lambda (H) \} \xrightarrow{\; \cong \;} \Lambda (\mathcal{G})
\end{equation*}
sending $t$ to $\gamma - 1$ between the $(\sigma, \delta)$-skew power series ring over $\Lambda (H)$ and the Iwasawa algebra $\Lambda (\mathcal{G})$; the multiplication on the left hand side is determined by the rule
\begin{equation*}
ta = \sigma (a) t + \delta (a) \qquad \textrm{for any} \ a \in \Lambda (H)
\end{equation*}
and by continuity.
\end{proposition}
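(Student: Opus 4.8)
The plan is to split the proof into a \emph{module} part and a \emph{ring} part: first identify $\Lambda(H)[[t;\sigma,\delta]]$ with $\Lambda(\G)$ as topological left $\Lambda(H)$-modules, and only afterwards recognise the ring structure. Write $\Phi$ for the candidate map $\sum_{i\ge 0}a_it^i\mapsto\sum_{i\ge 0}a_i(\gamma-1)^i$. Since $\Lambda(\Gamma)\cong\cO[[\gamma-1]]$ — the usual identification $\Lambda({\mathbb{Z}}_p)\cong\cO[[T]]$ — the powers $(\gamma-1)^i$ converge to $0$ in $\Lambda(\Gamma)$, hence in $\Lambda(\G)$, so the series defining $\Phi$ converges and $\Phi$ is a well-defined, manifestly left $\Lambda(H)$-linear map.

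First I would show that $\Phi$ is a homeomorphism. The cleanest route: because $\G=H\rtimes\Gamma$, the multiplication map $H\times\Gamma\to\G$, $(h,\gamma')\mapsto h\gamma'$, is a homeomorphism of profinite spaces; applying $\cO[[-]]$, which carries products of profinite sets to completed tensor products over $\cO$, yields an isomorphism of topological $\cO$-modules $\Lambda(H)\,\widehat\otimes_{\cO}\,\Lambda(\Gamma)\xrightarrow{\;\cong\;}\Lambda(\G)$, $a\otimes b\mapsto ab$, which is left $\Lambda(H)$-linear because $H\times\Gamma\cong\G$ intertwines left translation by $H$ with left translation on the first factor. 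Substituting $\Lambda(\Gamma)=\prod_{i\ge0}\cO(\gamma-1)^i$ and pulling the product out of the completed tensor product turns the left-hand side into $\prod_{i\ge0}\Lambda(H)(\gamma-1)^i$ and the isomorphism into $\Phi$. (In the style of this paper one may instead pass to the finite quotients $\G/U$, where $U$ runs through the open normal subgroups of $H$ that are also normal in $\G$: there $H/U$ is finite, $\G/U$ is an extension of ${\mathbb{Z}}_p$ by $H/U$, and $\Lambda(\G/U)$ is free as a topological right $\Lambda(\Gamma)$-module on coset representatives of $\G/U$ modulo the image of $\Gamma$, hence equals $\prod_{i\ge0}\cO[H/U](\gamma-1)^i$; since $\Lambda(\G)=\varprojlim_U\Lambda(\G/U)$ and $\varprojlim$ commutes with $\prod_i$, one recovers $\Phi$ in the limit. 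The one delicate point here is the bookkeeping with cofinal systems of subgroups and the compatibility of the finite-level isomorphisms with the transition maps.)

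It then remains to identify the ring structure, which is essentially formal provided one keeps in mind that $\Lambda(\G)$ is a \emph{topological} ring. Transport the multiplication of $\Lambda(\G)$ along $\Phi$ to $\prod_{i\ge0}\Lambda(H)$. For $a\in\Lambda(H)$ one computes in $\Lambda(\G)$
\[
(\gamma-1)a=\gamma a-a=(\gamma a\gamma^{-1})\gamma-a=\sigma(a)(\gamma-1)+\bigl(\sigma(a)-a\bigr)=\sigma(a)(\gamma-1)+\delta(a),
\]
so the transported multiplication obeys $ta=\sigma(a)t+\delta(a)$; together with associativity and distributivity this forces the product of any two \emph{polynomials} $\sum_{i<m}a_it^i$ and $\sum_{j<n}b_jt^j$ to be the usual skew-polynomial product, i.e.\ $\Phi$ restricts to a ring isomorphism of $\Lambda(H)[t;\sigma,\delta]$ onto the subring of $\Lambda(\G)$ generated by $\Lambda(H)$ and $\gamma-1$. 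Since the polynomials are dense in $\prod_{i\ge0}\Lambda(H)$ and the transported multiplication is continuous, it is \emph{the} unique continuous extension of the skew-polynomial multiplication — which simultaneously shows that the skew power series multiplication is well defined and identifies it with the transported one. Hence $\Phi$ is a topological ring isomorphism. I do not expect a serious obstacle: the proof assembles standard facts (the structure of $\Lambda({\mathbb{Z}}_p)$, the behaviour of $\cO[[-]]$ under products, topological freeness over $\Lambda(\Gamma)$), and the only real subtlety is that Step one produces merely an isomorphism of $\cO$-modules, so that all the ring content resides in the formal Step two.
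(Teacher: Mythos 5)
The paper states this proposition without proof, crediting Venjakob, so there is no proof of record here to compare against; your argument has to stand on its own, and it does. The organization --- establish $\Phi$ as a homeomorphism of topological left $\Lambda(H)$-modules first, then read off the ring structure from the single identity $(\gamma-1)a=\sigma(a)(\gamma-1)+\delta(a)$ --- is the right one, and both variants of Step one that you sketch (via $\cO[[H\times\Gamma]]\cong\Lambda(H)\,\widehat{\otimes}_\cO\,\Lambda(\Gamma)$ and the identification $\Lambda(\Gamma)=\prod_{i\ge0}\cO(\gamma-1)^i$, or via compatible finite-level isomorphisms $\Lambda(\G/U)\cong\prod_i\cO[H/U](\gamma-1)^i$) are standard and correct.

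One point is worth making explicit, because your phrase ``essentially formal'' undersells what Step two actually accomplishes. For $\delta\neq 0$ the coefficient of $t^n$ in a product of two genuine power series is an \emph{infinite} sum in $\Lambda(H)$: $t^i b$ for $i>n$ still contributes in degree $n$ via iterated applications of $\delta$, so the well-definedness of the skew power series multiplication is not a formality but requires a convergence statement about iterates of $\delta$. Your argument does handle this, but implicitly: you transport the already-continuous ring multiplication from $\Lambda(\G)$, observe that on the dense subring $\Lambda(H)[t;\sigma,\delta]$ of polynomials it agrees with the skew-polynomial product (which is an unambiguous, purely finite computation), and then invoke uniqueness of continuous extension. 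This yields the existence of the skew power series multiplication as a \emph{byproduct} of the module isomorphism rather than as a prerequisite, which is exactly the reverse of what a na\"{\i}ve attempt would require and is a legitimate and economical move. I would simply replace ``essentially formal'' with a sentence saying that; as written it reads as if you have not noticed that you are dodging a real convergence issue, when in fact you are dodging it correctly.
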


According to \cite{CFKSV} the set
\begin{equation*}
S : = S (\mathcal{G}) : = \{ f \in \Lambda (\mathcal{G}) : \Lambda (\mathcal{G}) / \Lambda (\mathcal{G}) f \ \textrm{is finitely generated over} \ \Lambda (H) \}
\end{equation*}
is an Ore set in $\Lambda (\mathcal{G})$ consisting of regular elements. We then may form the localization $A(\mathcal{G}) := \Lambda (\mathcal{G})_S$ as well as its $\Jac (\Lambda (H))$-adic completion
\begin{equation*}
B (\mathcal{G}) : = \widehat{\Lambda ({\mathcal{G}})_S} \ .
\end{equation*}

\begin{theorem}\label{skew-Laurent}
(\cite{SV1} Thm.\ 4.7 and Prop.\ 2.26 (i))
\begin{itemize}
\item[i.] The isomorphism in Prop.\ \ref{skew-power} extends to an isomorphism between $\Lambda (H) \ll t; \sigma, \delta]] : = \{ \sum_{i \in {\mathbb{Z}}} a_i t^i : a_i \in \Lambda (H),\, \lim_{i \rightarrow \infty} a_i = 0 \ \textrm{in} \ \Lambda (H) \}$ and $B (\mathcal{G})$.
\item[ii.] $B ( \mathcal{G})$ is noetherian and pseudocompact with
\begin{equation*}
\Jac (B (\mathcal{G})) = \{ \sum_{i \in {\Z}} a_i t^i \in B (\mathcal{G}) : a_i \in \Jac (\Lambda (H)) \} \ .
\end{equation*}
\item[iii.] $B (\mathcal{G})$ is flat over $A(\mathcal{G})$ and hence over $\Lambda (\mathcal{G})$.
\end{itemize}
\end{theorem}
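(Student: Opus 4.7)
The plan is to give an explicit model of $B(\mathcal{G})$ as the skew Laurent series ring $R := \Lambda(H)\ll t;\sigma,\delta]]$ and deduce all three parts from this identification.

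For part (i), I would first check that $R$ is a ring and is $\Jac(\Lambda(H))$-adically complete: iterating $ta = \sigma(a)t + \delta(a)$ expresses any product $(bt^m)(ct^n)$ as a Laurent sum whose coefficients tend to zero in the $\Jac(\Lambda(H))$-adic topology, since both $\sigma$ and $\delta = \sigma - \id$ preserve $\Jac(\Lambda(H))$; coefficient-wise Cauchy completeness is immediate. The inclusion $\Lambda(\mathcal{G}) \hookrightarrow R$ from Prop.\ \ref{skew-power} must then be shown to factor through $A(\mathcal{G})$, i.e.\ that every $f \in S$ is a unit in $R$. The decisive input is a skew Weierstrass preparation: the condition defining $S$ forces the reduction of $f$ in $(\Lambda(H)/\Jac(\Lambda(H)))[[t;\bar\sigma,\bar\delta]]$ to have a unit coefficient at some minimal index $n$, whereupon a Hensel-type lifting factors $f = u \cdot t^n \cdot v$ with $u,v \in R^\times$. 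Functoriality of $\Jac(\Lambda(H))$-adic completion then promotes the resulting map $A(\mathcal{G}) \to R$ to $B(\mathcal{G}) \to R$; surjectivity follows by approximating an arbitrary $\sum_i a_i t^i \in R$ by its Laurent partial sums, which already lie in $A(\mathcal{G})$ because $t \in A(\mathcal{G})^\times$, and injectivity from the separation of the $\Jac$-adic filtration.

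Part (ii) largely follows from the same analysis: the Weierstrass argument identifies the units of $R$ as precisely the series with at least one coefficient outside $\Jac(\Lambda(H))$, yielding the stated description of $\Jac(B(\mathcal{G}))$. Noetherianness then comes from a skew Hilbert basis theorem argued layer by layer: each quotient $R/\Jac(\Lambda(H))^n R$ is a skew Laurent polynomial ring over the Artinian ring $\Lambda(H)/\Jac(\Lambda(H))^n$ (Artinian because $\Lambda(H)/\Jac(\Lambda(H))$ is a finite semisimple $\mathcal{O}/\pi$-algebra), hence noetherian, and an inverse limit argument exploiting pseudocompactness transfers this to $R$ itself. Pseudocompactness of $R$ is visible from the coefficient-wise product topology.

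Part (iii) is then formal: $A(\mathcal{G})$ is noetherian as an Ore localization of the noetherian ring $\Lambda(\mathcal{G})$, and the $\Jac$-adic completion of a noetherian ring is flat over it; composing with the flatness of $\Lambda(\mathcal{G}) \to A(\mathcal{G})$ yields flatness over $\Lambda(\mathcal{G})$. The main obstacle throughout is the skew Weierstrass preparation used in part (i): one must simultaneously handle the noncommutativity of $\Lambda(H)$, the twist by $(\sigma,\delta)$, and the iterative lifting modulo successive powers of $\Jac(\Lambda(H))$. Once that cornerstone is in place, the rest reduces to standard completion and localization techniques applied to the explicit model $R$.
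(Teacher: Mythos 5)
The paper does not prove this theorem; it is quoted verbatim from \cite{SV1}, so your sketch is a reconstruction. The overall architecture you choose --- build the explicit model $R=\Lambda(H)\ll t;\sigma,\delta]]$, show every $f\in S$ becomes a unit in $R$ so that the map $\Lambda(\mathcal{G})\to R$ extends through $A(\mathcal{G})$, and then pass to completions --- is the right one, and it does match the strategy of \cite{SV1}. Two of your steps, however, would not survive the noncommutative setting as stated.

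The central difficulty is your skew Weierstrass preparation, and the form you assert for it is wrong in general. You claim that $f\in S$ forces its image $\bar f$ in $(\Lambda(H)/\Jac(\Lambda(H)))[[t;\bar\sigma,\bar\delta]]$ to have a \emph{unit} coefficient at some minimal index $n$, whence $f=u\,t^n\,v$ with $u,v\in R^\times$. This is only plausible when $\Lambda(H)$ is local, which it typically is not: $\Lambda(H)/\Jac(\Lambda(H))$ is a product of matrix rings over finite fields, and $f\in S$ does not force any single coefficient of $\bar f$ to be a unit of that ring --- the ``lowest nonvanishing index'' of $\bar f$ can be different in different simple factors, so no factorization $u\,t^n\,v$ with a single $n$ exists. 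The condition that actually matters is that $\bar f$ become a unit in the quotient skew Laurent ring $(\Lambda(H)/\Jac(\Lambda(H)))\ll t;\bar\sigma,\bar\delta]]$; one must argue that this quotient has no Jacobson radical, identify $S$ with the preimage of its units, and then lift unithood through the $\Jac(\Lambda(H))$-adic filtration on $R$, with no Weierstrass-style product decomposition available or needed. Your sketch would silently restrict to the pro-$p$ case.

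The flatness step also leans on a commutative fact that does not transfer. ``The $\Jac$-adic completion of a noetherian ring is flat over it'' depends on Artin--Rees, which is not automatic for two-sided ideals of noncommutative noetherian rings. Flatness of $A(\mathcal{G})\to B(\mathcal{G})$ requires a separate argument --- e.g.\ that the filtration by powers of $\Jac(\Lambda(H))A(\mathcal{G})$ is Zariskian, or that the ideal has the (strong) Artin--Rees property because it is generated by a suitably normalizing sequence --- and this is exactly the kind of point \cite{SV1} has to address explicitly. Likewise, your claim that the units of $R$ are ``precisely the series with at least one coefficient outside $\Jac(\Lambda(H))$'' is again only the local, pro-$p$ picture; it both over- and under-counts the units once $\Lambda(H)/\Jac(\Lambda(H))$ is a nontrivial product. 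As it stands, these gaps are substantive: the correct statement of the key closedness-of-$S$-under-units argument, and the correct input for flatness, each need to be supplied before the sketch becomes a proof.
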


We point out that the commutation rules in the ring $\Lambda (H) \ll t; \sigma, \delta]]$ are considerably more complicated. For example, one has
\begin{equation*}
at^{-1} = \sum_{i < 0} t^i \sigma \delta^{-i -1} (a) \qquad \textrm{for any} \ a \in \Lambda (H)
\end{equation*}
involving an infinite Laurent series.

For later applications we note that, for finite $H$, the pseudocompact topology on $B (\mathcal{G})$ is the $\pi$-adic one.

Let $U \subseteq \mathcal{G}$ be an open subgroup (equipped with $H \cap U$).

\begin{proposition}\label{finitefree}
(\cite{SV1} Prop.\ 4.5) We have
\begin{equation*}
A(\mathcal{G}) = A(U) \otimes_{\Lambda (U)} \Lambda (\mathcal{G}) \quad \textrm{and} \quad B (\mathcal{G}) = B (U) \otimes_{\Lambda (U)} \Lambda (\mathcal{G})
\end{equation*}
as bimodules; in particular, $A(\mathcal{G})$, resp.\ $B (\mathcal{G})$, is free as an $A(U)$-, resp.\ a $B (U)$-, module of rank equal to $[\mathcal{G}: U]$.
\end{proposition}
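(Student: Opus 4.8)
The plan is to reduce both equalities to the classical fact that $\Lambda(\mathcal{G})$ is a finitely generated free module over $\Lambda(U)$ (of rank $n=[\mathcal{G}:U]$, on either side), a basis being given by a system $g_1,\dots,g_n$ of coset representatives, so that $\Lambda(\mathcal{G})=\bigoplus_i g_i\Lambda(U)$ as a right and $\Lambda(\mathcal{G})=\bigoplus_i\Lambda(U)g_i$ as a left $\Lambda(U)$-module. The point to add is that passing from $\Lambda(\mathcal{G})$ to $A(\mathcal{G})$, resp. to $B(\mathcal{G})$, amounts to passing from $\Lambda(U)$ to $A(U)$, resp. to $B(U)$, and then extending scalars along $\Lambda(U)\subseteq\Lambda(\mathcal{G})$. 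For the part concerning $A$ I would establish this via the intermediate statement that the image of $S(U)$ is again a two-sided Ore set of regular elements in $\Lambda(\mathcal{G})$ with $\Lambda(\mathcal{G})_{S(U)}=A(\mathcal{G})$.

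Two inputs are needed here. Firstly, $S(U)\subseteq S(\mathcal{G})$: for $f\in S(U)$ the right-module decomposition gives $\Lambda(\mathcal{G})/\Lambda(\mathcal{G})f\cong\bigoplus_i g_i\cdot\bigl(\Lambda(U)/\Lambda(U)f\bigr)$, and since $\Lambda(U)/\Lambda(U)f$ is finitely generated over $\Lambda(H\cap U)$ while conjugation by $g_i$ carries $\Lambda(H\cap U)$ into $\Lambda(H)$ (this is where normality of $H$ in $\mathcal{G}$ enters), the right-hand side is finitely generated over $\Lambda(H)$. Secondly --- and this I expect to be the crux --- $S(U)$ is cofinal in $S(\mathcal{G})$, in the sense that every $s\in S(\mathcal{G})$ admits $t,t'\in S(U)$ with $t\in\Lambda(\mathcal{G})s$ and $t'\in s\Lambda(\mathcal{G})$. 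Indeed $M:=\Lambda(\mathcal{G})/\Lambda(\mathcal{G})s$ is finitely generated over $\Lambda(H)$ by definition of $S(\mathcal{G})$, hence over $\Lambda(H\cap U)$ because $H\cap U$ is open in $H$, hence it is a finitely generated left $\Lambda(U)$-module which is $S(U)$-torsion by the characterization of torsion modules in \cite{CFKSV} (applied to $U$, which still satisfies $U/(H\cap U)\cong{\mathbb{Z}}_p$); thus $S(U)^{-1}M=0$, the image of $1\in M$ already vanishes there, and this produces $t\in S(U)$ with $t\cdot 1=0$ in $M$, i.e.\ $t\in\Lambda(\mathcal{G})s$. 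The element $t'$ comes from the symmetric argument applied to $\Lambda(\mathcal{G})/s\Lambda(\mathcal{G})$ through the anti-automorphism $g\mapsto g^{-1}$ of $\Lambda(\mathcal{G})$, which preserves both $\Lambda(H)$ and $S(\mathcal{G})$.

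Granting these, the rest is formal. Regularity of $S(U)$ in $\Lambda(\mathcal{G})$ is inherited from $S(\mathcal{G})$. For the left Ore condition, given $t\in S(U)$ and $x\in\Lambda(\mathcal{G})$ one first finds $s\in S(\mathcal{G})$, $y\in\Lambda(\mathcal{G})$ with $sx=yt$ (Ore for $S(\mathcal{G})$) and then, using $t''=bs\in S(U)$ with $b\in\Lambda(\mathcal{G})$ as above, obtains $t''x=(by)t$; the right Ore condition is symmetric, using $t'=sa$. The equality $\Lambda(\mathcal{G})_{S(U)}=A(\mathcal{G})$ then follows from the universal properties of the two localizations, since $S(U)\subseteq S(\mathcal{G})$ is inverted in $A(\mathcal{G})$ while each $s\in S(\mathcal{G})$ becomes invertible in $\Lambda(\mathcal{G})_{S(U)}$, having there the left inverse coming from $t=bs$ and the right inverse coming from $t'=sa$. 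As Ore localization is flat and commutes with finite direct sums, applying $S(U)^{-1}(-)$ to $\Lambda(\mathcal{G})=\bigoplus_i\Lambda(U)g_i$ yields $A(\mathcal{G})=\bigoplus_i A(U)g_i$, a free left $A(U)$-module of rank $[\mathcal{G}:U]$; and the natural bimodule homomorphism $A(U)\otimes_{\Lambda(U)}\Lambda(\mathcal{G})\to A(\mathcal{G})$ carries the evident basis to this one, hence is an isomorphism.

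For the statement about $B$ I would pass to $\Jac(\Lambda(H))$-adic completions. Since $A(\mathcal{G})$ is finitely generated --- in fact free of rank $[\mathcal{G}:U]$ --- over the noetherian ring $A(U)$, completion is compatible with this module structure, so $B(\mathcal{G})=\widehat{A(\mathcal{G})}$ is the base change of $A(\mathcal{G})$ along $A(U)\to B(U)$, provided the $\Jac(\Lambda(H))$-adic and $\Jac(\Lambda(H\cap U))$-adic topologies agree --- which they do because $H\cap U$ is open in $H$ (one reduces to a common open subgroup of $H$ that is normal in it). This gives $B(\mathcal{G})=B(U)\otimes_{A(U)}A(\mathcal{G})=B(U)\otimes_{\Lambda(U)}\Lambda(\mathcal{G})=\bigoplus_i B(U)g_i$, hence the second equality together with the asserted freeness of rank $[\mathcal{G}:U]$. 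The real work is the cofinality of $S(U)$ in $S(\mathcal{G})$ in the second paragraph; once that is available, both halves of the proposition reduce to bookkeeping with the free module structure.
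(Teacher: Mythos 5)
The paper gives no proof here; it simply cites \cite{SV1} Prop.~4.5. Your argument is self-contained and, as far as I can verify, correct. The decisive step is the two-sided cofinality of $S(U)$ in $S(\mathcal{G})$, which you extract from the CFKSV characterization of $S$-torsion via finite generation over $\Lambda(H)$ --- applied once in $\mathcal{G}$ and once in $U$ with $H\cap U$ in the role of $H$ (the open normality of $H$ in $\mathcal{G}$ enters twice: once to see $S(U)\subseteq S(\mathcal{G})$ via the coset decomposition, conjugating $\Lambda(H\cap U)$ into $\Lambda(H)$, and once to get $U/(H\cap U)\cong\mathbb{Z}_p$ so that the CFKSV machinery applies to $U$). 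The Ore-condition check for $S(U)$ inside $\Lambda(\mathcal{G})$, the universal-property identification $\Lambda(\mathcal{G})_{S(U)}=A(\mathcal{G})$, and the free-module bookkeeping are all carried out correctly. For the $B$-statement you correctly reduce to the mutual cofinality of the $\Jac(\Lambda(H))$-adic and $\Jac(\Lambda(H\cap U))$-adic filtrations on $A(\mathcal{G})$, together with the standard fact that completion of a finitely generated module over a noetherian ring is base change along completion of the ring; the passage to a common open subgroup normal in $H$ is the right device. It is entirely possible that \cite{SV1}'s proof instead proceeds via the explicit skew power series and Laurent series descriptions of Prop.~\ref{skew-power} and Thm.~\ref{skew-Laurent}; your Ore-set cofinality route is a more purely ring-theoretic alternative, requiring no structure theory beyond the CFKSV torsion criterion, and it stands on its own.
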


In the special case where $H$ is finite we find an open subgroup $\Gamma_0 \subseteq \Gamma$ which centralizes $H$. Hence $\Gamma_0$ is open in $\mathcal{G}$ and lies in the center of $\mathcal{G}$. It follows that
\begin{itemize}
\item[--] $A(\mathcal{G})$ is an $A(\Gamma_0)$-algebra which is finitely generated free as a module, and
\item[--] $B (\mathcal{G})$ is a $B (\Gamma_0)$-algebra which also is finitely generated free as a module.
\end{itemize}
It is a special case of Thm.\ \ref{skew-Laurent} that $B (\Gamma_0)$ is a complete discrete valuation ring with prime element $\pi$. Using the Weierstrass preparation theorem one sees directly that $A(\Gamma_0)$ is the localisation of $\Lambda (\Gamma_0)  = \mathcal{O} [[ t ]]$ in the height one prime ideal $\pi \Lambda (\Gamma_0)$ and hence is a discrete valuation ring with completion $B (\Gamma_0)$.

\section{Additive commutators}

For any ring $A$ and any set $X$ we let $A[X]$ denote the free $A$-module over the basis $X$. By $[A,A]$ we denote the additive subgroup of $A$ generated by all additive commutators $[a_1,a_2]$ with $a_1, a_2 \in A$. We note that, if $A_0$ is the center of $A$, then $[A,A]$ is an $A_0$-submodule of $A$. Finally, for any group $G$ we denote by $\Conj(G)$ the set of all conjugacy classes $[g]_G$ of elements $g \in G$.

\begin{lemma}\label{conj}
For any finite group $G$ we have
\begin{equation*}
    \cO[G]/\big[ \cO[G], \cO[G] \big] \xrightarrow{\; \cong \;} \cO[\Conj(G)]
\end{equation*}
as $\cO$-modules.
\end{lemma}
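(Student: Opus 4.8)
The plan is to exhibit an explicit $\cO$-linear map $\cO[G] \to \cO[\Conj(G)]$ sending a group element $g$ to its conjugacy class $[g]_G$, to check that it is surjective with kernel exactly $[\cO[G],\cO[G]]$, and thereby conclude. Concretely, define $\varphi\colon \cO[G]\to \cO[\Conj(G)]$ on the $\cO$-basis $G$ of $\cO[G]$ by $\varphi(g) = [g]_G$ and extend $\cO$-linearly. Since every conjugacy class is hit by any of its members, $\varphi$ is surjective, and since $\Conj(G)$ is a basis of $\cO[\Conj(G)]$, the map is split (choose a set-theoretic section picking one representative per class), so it suffices to identify $\ker\varphi$ with $[\cO[G],\cO[G]]$.

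First I would show $[\cO[G],\cO[G]] \subseteq \ker\varphi$. The subgroup $[\cO[G],\cO[G]]$ is generated by elements $[a_1,a_2] = a_1a_2 - a_2a_1$; by $\cO$-bilinearity of the commutator it is in fact generated by the elements $gh - hg$ with $g,h\in G$. For such a generator, $\varphi(gh - hg) = [gh]_G - [hg]_G = 0$ because $gh$ and $hg = g^{-1}(gh)g$ are conjugate. Hence $[\cO[G],\cO[G]]\subseteq \ker\varphi$, and $\varphi$ factors through an $\cO$-linear surjection $\bar\varphi\colon \cO[G]/[\cO[G],\cO[G]] \to \cO[\Conj(G)]$.

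Next I would show $\bar\varphi$ is injective, equivalently that $\ker\varphi \subseteq [\cO[G],\cO[G]]$. For this I claim that the images of the conjugacy classes span $\cO[G]/[\cO[G],\cO[G]]$ and that there are no more relations: concretely, for each class $c\in\Conj(G)$ fix a representative $g_c$, and observe that for any $g\in G$ one has $g \equiv g_{[g]_G} \pmod{[\cO[G],\cO[G]]}$, since $g = x g_{[g]_G} x^{-1}$ for some $x\in G$ and $x g_{[g]_G} x^{-1} - g_{[g]_G} x^{-1} x = [x g_{[g]_G}, x^{-1}]\cdot$(sign bookkeeping) lies in $[\cO[G],\cO[G]]$ — more cleanly, $xyx^{-1} - y = (xy)x^{-1} - x^{-1}(xy) \in [\cO[G],\cO[G]]$ with $y = g_{[g]_G}$. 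Therefore the classes $g_c$, $c\in\Conj(G)$, generate $\cO[G]/[\cO[G],\cO[G]]$ as an $\cO$-module. Since $\bar\varphi$ sends $g_c$ to the basis element $c$ and is surjective, a surjection of $\cO$-modules carrying a generating set of $|\Conj(G)|$ elements onto a free basis of the same cardinality must be an isomorphism: any $\cO$-linear relation among the $g_c$ in the quotient would map to a relation among the basis elements $c$, forcing all coefficients to vanish, so the $g_c$ are $\cO$-linearly independent in the quotient and $\bar\varphi$ is injective.

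I expect the only mild subtlety — the "main obstacle", such as it is — to be the bookkeeping in the second paragraph of the argument above: one must be careful that the reduction $xyx^{-1}\equiv y$ modulo $[\cO[G],\cO[G]]$ is justified by writing $xyx^{-1}-y$ as an honest additive commutator of two elements of $\cO[G]$ (here $[xy,\,x^{-1}] = (xy)(x^{-1}) - (x^{-1})(xy) = xyx^{-1} - y$ works), rather than by some ad hoc manipulation; everything else is a routine free-module argument. No deeper input is needed, and in particular the finiteness of $G$ is used only to ensure $\Conj(G)$ is finite so that the "surjection onto a free module of equal finite rank is an isomorphism" step is immediate.
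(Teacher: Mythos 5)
Your proof is correct. The essential computational input is the same as the paper's: both rest on the identity $xyx^{-1}-y=(xy)x^{-1}-x^{-1}(xy)\in[\cO[G],\cO[G]]$. Where you diverge is in the logical organization of the reverse inclusion. The paper takes an arbitrary kernel element $\sum_g a_g g$, decomposes it by conjugacy class, observes that each class-sum has total coefficient zero, and then writes that class-sum explicitly as an $\cO$-combination of the differences $g-h$ (with $g,h$ conjugate), each of which is an additive commutator by the identity above; this directly exhibits $\ker\varphi\subseteq[\cO[G],\cO[G]]$. You instead show that the chosen representatives $g_c$ span the quotient $\cO[G]/[\cO[G],\cO[G]]$ and that $\bar\varphi$ carries them onto the free $\cO$-basis $\Conj(G)$, so any relation among the $g_c$ is killed by pushing through $\bar\varphi$; the injectivity of $\bar\varphi$ then falls out. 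Your route is structurally a bit cleaner and avoids bookkeeping with the centralizer $G_h$, at the cost of being less explicit about how a kernel element is realized as a sum of commutators. One small remark: you say finiteness of $G$ is needed to invoke "surjection onto a free module of equal finite rank is an isomorphism," but your own argument (a relation among $g_c$ maps to a relation among the basis $\Conj(G)$) is rank-agnostic and would work verbatim for infinite $G$ as well, provided sums are finite; finiteness is only relevant for the paper's later passage to the profinite limit, not for this lemma.
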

\begin{proof}
We consider the surjective $\cO$-module homomorphism
\begin{align*}
    \cO[G] & \longrightarrow \cO[\Conj(G)] \\
    g & \longmapsto [g]_G \ .
\end{align*}
Because of $gh - hg = gh - h(gh)h^{-1}$ it has $\big[ \cO[G], \cO[G] \big]$ in its kernel. On the other hand let $\sum_{g \in G} a_g g$ be any element in the kernel. It is the sum of the elements $\sum_{g \in [h]_G} a_g g$ in the kernel. In particular, $\sum_{g \in [h]_G} a_g = 0$. Hence
\begin{align*}
    \sum_{g \in [h]_G} a_g g & = \sum_{g \in [h]_G} a_g (g-h) = \sum_{g \in G/G_h} a_{ghg^{-1}} (ghg^{-1} - h) \\
    & = \sum_{g \in G/G_h} a_{ghg^{-1}} ((gh)g^{-1} - g^{-1} (gh)) \in \big[ \cO[G], \cO[G] \big]
\end{align*}
(where $G_h$ denotes the centralizer of $h$ in $G$).
\end{proof}

For our Lie group $\G$ we put
\begin{equation*}
    \cO [[\Conj(\G)]] := \varprojlim \cO[\Conj(\G/N)]
\end{equation*}
where $N$ runs over all open normal subgroups of $\G$. We then obtain in the projective limit an $\cO$-module isomorphism
\begin{equation*}
    \Lambda(\G)/\overline{[\Lambda(\G),\Lambda(\G)]} \xrightarrow{\; \cong \;} \cO[[\Conj(\G)]] \ .
\end{equation*}
If $\G_0 \subseteq \G$ denotes the center then this, in an obvious way, is a $\Lambda(\G_0)$-module isomorphism. Suppose that $\G_0$ is open in $\G$ and put $G := \G/\G_0$. Then $\G_0$ acts by multiplication on $\Conj(\G)$ with finitely many orbits. In fact,
\begin{align*}
    \G_0 \backslash \Conj(\G) & \xrightarrow{\; \simeq \;} \Conj(G) \\
    [g]_\G & \longmapsto [g\G_0]_G
\end{align*}
is a bijection. Hence, noncanonically,
\begin{equation*}
    \cO[[\Conj(\G)]] \cong \Lambda(\G_0)[\Conj(G)]
\end{equation*}
as $\Lambda(\G_0)$-modules.

\begin{lemma}\label{B-comm}
If $\G_0$ is open in $\G$ then we have:
\begin{itemize}
  \item[i.] $[\Lambda(\G), \Lambda(\G)]$ is closed in $\Lambda(\G)$;
  \item[ii.] $B(\G)/[B(\G), B(\G)] = B(\G_0) \otimes_{\Lambda(\G_0)} \cO[[\Conj(\G)]]$.
\end{itemize}
\end{lemma}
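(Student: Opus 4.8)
The plan is to prove both parts by a reduction-to-the-finite-level argument, exploiting the structure $B(\G) = B(\G_0)\otimes_{\Lambda(\G_0)}\Lambda(\G)$ from Proposition~\ref{finitefree} together with the fact that $\G_0$ is central, so that $B(\G)$ is a finitely generated free module over the (complete discrete valuation ring or, in general, pseudocompact) ring $B(\G_0)$ with basis a set of coset representatives $g_1,\dots,g_d$ of $\G/\G_0=G$.

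For part (i), I would first recall that $\Lambda(\G)=\varprojlim_N \cO[\G/N]$ over open normal subgroups $N\subseteq\G_0$, and that at each finite level Lemma~\ref{conj} gives $\cO[\G/N]/[\cO[\G/N],\cO[\G/N]]\cong\cO[\Conj(\G/N)]$, a finitely generated free $\cO$-module; in particular $[\cO[\G/N],\cO[\G/N]]$ is a direct summand of $\cO[\G/N]$ as an $\cO$-module. Writing $\Lambda(\G)$ as the inverse limit and noting that the transition maps send commutators to commutators, one gets $\overline{[\Lambda(\G),\Lambda(\G)]}=\varprojlim_N [\cO[\G/N],\cO[\G/N]]$ and the quotient is the (compact) module $\cO[[\Conj(\G)]]$. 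To upgrade the closure of the commutator subspace to the assertion that $[\Lambda(\G),\Lambda(\G)]$ is \emph{already} closed, I would use that $\G_0$ is open: choose the $g_i$ as above, observe that modulo $[\Lambda(\G),\Lambda(\G)]$ every element is congruent to one supported on the central coset contributions, and argue that the $\Lambda(\G_0)$-submodule $[\Lambda(\G),\Lambda(\G)]$ of the finitely generated $\Lambda(\G_0)$-module $\Lambda(\G)$ has a finitely generated, hence closed, image description — more precisely, identifying $\Lambda(\G)\cong\Lambda(\G_0)[\Conj(G)]\oplus[\Lambda(\G),\Lambda(\G)]$ compatibly with the finite-level splittings shows that the commutator submodule is a direct $\Lambda(\G_0)$-summand and thus closed since $\Lambda(\G_0)$ is Noetherian (or pseudocompact) and the complement $\Lambda(\G_0)[\Conj(G)]$ is $\Lambda(\G_0)$-free of finite rank.

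For part (ii), the key point is to commute the formation of additive commutators with the base change $B(\G)=B(\G_0)\otimes_{\Lambda(\G_0)}\Lambda(\G)$. Since $\G_0$ is central, $B(\G_0)$ lies in the center of $B(\G)$, so $[B(\G),B(\G)]$ is a $B(\G_0)$-submodule, and I claim $[B(\G),B(\G)]=B(\G_0)\otimes_{\Lambda(\G_0)}[\Lambda(\G),\Lambda(\G)]$. The inclusion $\supseteq$ is immediate from $B(\G_0)$-bilinearity; for $\subseteq$, write a general element of $B(\G)$ in the free basis $g_1,\dots,g_d$ with coefficients in $B(\G_0)$, expand an additive commutator bilinearly, and use that $[b g_i, c g_j]=bc\,[g_i,g_j]$ with $[g_i,g_j]\in[\Lambda(\G),\Lambda(\G)]$ (after reducing $g_ig_j$ and $g_jg_i$ in the basis, which only shuffles $\Lambda(\G_0)$-coefficients, all of which lie in $B(\G_0)$). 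Combining this with part (i) — which guarantees $[\Lambda(\G),\Lambda(\G)]$ is closed and a $\Lambda(\G_0)$-direct summand, so that tensoring with the flat (Theorem~\ref{skew-Laurent}(iii)) ring $B(\G_0)$ preserves the quotient — gives
\begin{equation*}
B(\G)/[B(\G),B(\G)] = B(\G_0)\otimes_{\Lambda(\G_0)}\bigl(\Lambda(\G)/[\Lambda(\G),\Lambda(\G)]\bigr) = B(\G_0)\otimes_{\Lambda(\G_0)}\cO[[\Conj(\G)]],
\end{equation*}
where the last identification is the $\Lambda(\G_0)$-module isomorphism established before Lemma~\ref{B-comm}.

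The main obstacle I anticipate is the topological subtlety in part (i): proving not merely that the \emph{closure} of $[\Lambda(\G),\Lambda(\G)]$ has the expected quotient (which is formal from Lemma~\ref{conj} and an inverse limit), but that the commutator subgroup is honestly closed without passing to closure. This is exactly where openness of $\G_0$ is essential — it makes $\Lambda(\G)$ module-finite over $\Lambda(\G_0)$ and lets one realize $[\Lambda(\G),\Lambda(\G)]$ as a finitely generated $\Lambda(\G_0)$-submodule, which is automatically closed in the pseudocompact (Noetherian) ring $\Lambda(\G)$; a small amount of care is needed to check the finite-level $\cO$-module splittings of Lemma~\ref{conj} can be chosen compatibly (equivalently, that the $\Lambda(\G_0)$-span of $\{[g]_G\}$ maps isomorphically onto the quotient), but once part (i) is in hand part (ii) is a routine flat-base-change computation.
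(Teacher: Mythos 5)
Your core argument for part (i) — that $[\Lambda(\G),\Lambda(\G)]$ is a $\Lambda(\G_0)$-submodule of the finitely generated $\Lambda(\G_0)$-module $\Lambda(\G)$, hence (by Noetherianity of $\Lambda(\G_0)$) is itself finitely generated and therefore the image of a continuous map from a compact module, hence closed — is exactly the paper's argument, and this alone suffices. However, the refinement you propose ("identifying $\Lambda(\G)\cong\Lambda(\G_0)[\Conj(G)]\oplus[\Lambda(\G),\Lambda(\G)]$ compatibly with the finite-level splittings shows that the commutator submodule is a direct $\Lambda(\G_0)$-summand and thus closed") is circular: before (i) is proved, the only splitting available from the finite levels is $\Lambda(\G)\cong\Lambda(\G_0)[\Conj(G)]\oplus\overline{[\Lambda(\G),\Lambda(\G)]}$, since the inverse limit of the $[\cO[\G/N],\cO[\G/N]]$ computes the \emph{closure}, not the commutator subspace itself; replacing the closure by $[\Lambda(\G),\Lambda(\G)]$ is precisely what you are trying to prove. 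So you should drop the splitting route and keep only the finite-generation argument, which is self-contained.

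For part (ii), your argument is correct and essentially the paper's: the bilinear expansion in a $B(\G_0)$-basis gives $[B(\G),B(\G)]=B(\G_0)\cdot[\Lambda(\G),\Lambda(\G)]$, and flatness of $B(\G_0)$ over $\Lambda(\G_0)$ (Theorem~\ref{skew-Laurent}.iii) then identifies this with $B(\G_0)\otimes_{\Lambda(\G_0)}[\Lambda(\G),\Lambda(\G)]$ inside $B(\G)$, so the quotient is $B(\G_0)\otimes_{\Lambda(\G_0)}\cO[[\Conj(\G)]]$. You spell out the bilinearity more explicitly than the paper does, which is fine. Note that once (i) is in hand one does not actually need $[\Lambda(\G),\Lambda(\G)]$ to be a \emph{direct summand}; flatness of $B(\G_0)$ already gives exactness of $B(\G_0)\otimes_{\Lambda(\G_0)}(-)$ applied to $0\to[\Lambda(\G),\Lambda(\G)]\to\Lambda(\G)\to\cO[[\Conj(\G)]]\to 0$, so the direct-sum language can be dispensed with entirely.
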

\begin{proof}
i. With $\Lambda(\G)$ also $[\Lambda(\G), \Lambda(\G)]$ is a finitely generated $\Lambda(\G_0)$-module. Hence $[\Lambda(\G), \Lambda(\G)]$ is the image of a continuous $\Lambda(\G_0)$-linear map $\Lambda(\G)^m \longrightarrow \Lambda(\G)$ between compact modules.

ii. By Prop.\ \ref{finitefree} we have $B(\G) = B(\G_0) \otimes_{\Lambda(\G_0)} \Lambda(\G)$. It follows that $[B(\G), B(\G)] = B(\G_0) \cdot [\Lambda(\G), \Lambda(\G)]$.
\end{proof}

\section{The additive theory}\label{sec:additive}

Let $U \subseteq \G$ be any open subgroup. On the one hand we let $N(U)$ denote the normalizer of $U$ in $\G$, and we put $W(U) := N(U)/U$. On the other hand we note that, by \cite{DDMS} Thm.\ 8.3.2 and Prop.\ 1.19, the commutator subgroup $[U,U]$ is closed in $U$. Hence
\begin{equation*}
    U^{\ab} := U/[U,U]
\end{equation*}
is a commutative Lie group. We let $\pr^U_{U^{\ab}}:\cO[[\Conj(U)]] \twoheadrightarrow \cO[[\Conj(U^{\ab})]]=\Lambda(U^{\ab})$ denote the canonical surjection induced by the obvious projection $U \rightarrow U^{\ab}$.

Let $U \subseteq V \subseteq \G$ be open subgroups. Any $g \in V$ acts by left multiplication on $V/U$. We note:
\begin{itemize}
  \item[--] Any set of representatives for the left cosets of $U$ in $V$ also is a basis of $\Lambda(V)$ as a right $\Lambda(U)$-module.
  \item[--] We have $gxU = xU$ if and only if $x^{-1}gx \in U$.
\end{itemize}
Since $gx = x(x^{-1}gx)$ the trace map in this situation therefore is given by
\begin{align*}
    \tr^V_U : \cO[[\Conj(V)]] & \longrightarrow \cO[[\Conj(U)]] \\
    [g]_V & \longmapsto \sum_{x \in V/U , x^{-1}gx \in U} [x^{-1}gx]_U \ .
\end{align*}
It is $\Lambda(\G_0 \cap U)$-linear. If $[V,V]\subseteq U$, we shall also need the two maps
\begin{equation*}
    \xymatrix{
       & & & \Lambda(V^{\ab}) \ar[d]^{\tau^V_U:=\tr^{V^{\ab}}_{U/[V,V]}} \\
      \Lambda(U^{\ab}) \ar[rrr]^{\pi^V_U:=\pr^{U^{\ab}}_{U/[V,V]}} & & & \Lambda(U/[V,V]) ,  }
\end{equation*}
where $\pr^{U^{\ab}}_{U/[V,V]}$ is the obvious canonical surjection.

\begin{remark}\label{trace}
\begin{itemize}
  \item[i.] If $U$ is normal in $V$ we have
\begin{equation*}
     \tr^V_U ([g]_V) =
     \begin{cases}
     \sum_{x \in V/U} [xgx^{-1}]_U & \textrm{if $g \in U$}, \\
     0 & \textrm{if $g \not\in U$}.
     \end{cases}
\end{equation*}
  \item[ii.] If $V$ is abelian we have
\begin{equation*}
     \tr^V_U (g) =
     \begin{cases}
      [V:U] g & \textrm{if $g \in U$}, \\
     0 & \textrm{if $g \not\in U$}.
     \end{cases}
\end{equation*}
 \item[iii.] If $[V,V]\subseteq U$ then \begin{equation*}
     \tau^V_U (g[V,V]) =
     \begin{cases}
      [V:U] g[V,V] & \textrm{if $g \in U$}, \\
      0 & \textrm{if $g \not\in U$}.
     \end{cases}
\end{equation*}
\end{itemize}
\end{remark}

Suppose that $U$ is abelian. Then:
\begin{itemize}
  \item[--] For any $g \in W(U)$ we have the well defined ring automorphism
      \begin{align*}
        \sigma_{U,g} : \Lambda(U) & \longrightarrow \Lambda(U) \\
        f & \longmapsto gfg^{-1}
      \end{align*}
      as well as the $\Lambda(\G_0 \cap U)$-linear endomorphism
      \begin{equation*}
        \sigma_U := \sum_{g \in W(U)} \sigma_{U,g} \ .
      \end{equation*}
  \item[--] The image of $\sigma_U$ is an ideal in the subring $\Lambda(U)^{W(U)} := \{ f \in \Lambda(U) : \sigma_{U,g}(f) = f \ \textrm{for any $g \in W(U)$} \}$.
\end{itemize}

In a later section we will need, more generally, for two open subgroups $U \subseteq V \subseteq \G$ such that $U$ is normal in $V$ the $\Lambda(\G_0 \cap U)$-linear map
\begin{align*}
    \sigma_U^V:\Lambda(U^{\ab}) & \longrightarrow \Lambda(U^{\ab}) \\
     f & \longmapsto \sum_{g\in V/U} gfg^{-1} \ .
\end{align*}

From now on we assume that
\begin{equation}\tag{H1}\label{f:H1}
    \textrm{$\G_0$ is open in $\G$},
\end{equation}
and we fix an open central subgroup $\Z \subseteq \G$. Let $S(\G,\Z)$  denote the set
of all subgroups $\Z \subseteq U \subseteq \G$.
For $U\in S(\G,\Z)$ we define
\begin{equation*}
    \beta_U := \pr^U_{U^{\ab}}\circ \tr^{\G}_U \ .
\end{equation*}
Our main interest in this section lies in the $\Lambda(\Z)$-linear map
\begin{align*}
    \beta := \beta^{\G}_{\Z} : \cO[[\Conj(\G)]] & \longrightarrow \prod_{U \in S(\G,\Z)} \Lambda(U^{\ab}) \\
    f & \longmapsto (\beta_U(f))_U \ .
\end{align*}

We exhibit three conditions any element $(a_U)_U$ in the image of $\beta$ has to satisfy.

\noindent 1. Let  $U \subseteq V$ be in $S(\G,\Z)$ such that $[V,V]\subseteq U$. In particular, $U$ is normal in $V$. We claim that the diagram
\begin{equation*}
    \xymatrix{
      \cO[[\Conj(\G)]] \ar[d]_{\beta_U} \ar[r]^{\beta_V} & \Lambda(V^{\ab}) \ar[d]^{\tau^V_U} \\
      \Lambda(U^{\ab}) \ar[r]^{\pi^V_U} & \cO[[ U/[V,V]]]   }
\end{equation*}
commutes. By the transitivity of traces this reduces to the commutativity of the diagram
\begin{equation*}
    \xymatrix{
      \cO[[\Conj(V)]] \ar[d]_{\tr^V_U} \ar[rrr]^{\pr^V_{V^{\ab}}} &&& \Lambda(V^{\ab}) \ar[d]^{\tr^{V^{\ab}}_{U/[V,V]}} \\
      \cO[[\Conj(U)]] \ar[rrr]^{\pr^{U^{\ab}}_{U/[V,V]} \circ \pr^U_{U^{\ab}}} &&& \Lambda(U/[V,V]) .   }
\end{equation*}
It suffices to check this on classes $[g]_V$ for $g\in V$. Indeed, using Remark \ref{trace}.i/iii we
compute
\begin{align*}
\tr^{V^{\ab}}_{U/[V,V]} \circ \pr^V_{V^{\ab}} ([g]_V) & = [V:U]g[V,V] \\
& = \sum_{x\in V/U} xgx^{-1}[V,V]\\
& = \pr^{U^{\ab}}_{U/[V,V]} \circ \pr^U_{U^{\ab}} (\sum_{x\in V/U} [xgx^{-1}]_U) \\
& = \pr^{U^{\ab}}_{U/[V,V]} \circ \pr^U_{U^{\ab}} \circ \tr^V_U ([g]_V)
\end{align*}
for $g \in U$; if $g \not\in U$ then both sides are equal to zero. Hence:
\begin{equation}\tag{A1}\label{f:A1}
    \tau^V_U (a_V) = \pi^V_U(a_U) \qquad\textrm{for any $U \subseteq V$ in $S(\G,\Z)$ with $[V,V]\subseteq U$.}
\end{equation}
2) For any open subgroup $U \subseteq \G$ and any $g \in \G$ the diagram
\begin{equation*}
    \xymatrix{
                & \cO[[\Conj(\G)]] \ar[dl]_{\tr^\G_U}  \ar[dr]^{\tr^\G_{gUg^{-1}}}             \\
 \cO[[\Conj(U)]] \ar[rr]^{g . g^{-1}} & &   \cO[[\Conj(gUg^{-1})]]        }
\end{equation*}
is commutative. This, in particular, implies
\begin{equation}\tag{A2}\label{f:A2}
    a_{gUg^{-1}} = ga_U g^{-1} \qquad\textrm{for any $U \in S(\G,\Z)$ and $g \in \G$}.
\end{equation}
3) Let $U \in S(\G,\Z)$. By the transitivity of traces we have
\begin{equation*}
    \beta_U = \pr^U_{U^{\ab}} \circ \tr^\G_U = \pr^U_{U^{\ab}} \circ \tr^{N(U)}_U \circ \tr^\G_{N(U)}
\end{equation*}
and hence
\begin{equation*}
    \im (\beta_U) \subseteq \im (\pr^U_{U^{\ab}} \circ \tr^{N(U)}_U) \ .
\end{equation*}
But Remark \ref{trace}.i implies
\begin{equation*}
    \pr^U_{U^{\ab}} \circ \tr^{N(U)}_U ([g]_{N(U)}) =
    \begin{cases}
    \sigma^{N(U)}_U(g[U,U]) & \textrm{if $g \in U$}, \\
    0 & \textrm{if $g \not\in U$}.
    \end{cases}
\end{equation*}
It follows that $\im(\beta_U) \subseteq \im (\sigma^{N(U)}_U)$. We conclude:
\begin{equation}\tag{A3}\label{f:A3}
    a_U \in \im(\sigma^{N(U)}_U) \qquad\textrm{for any $U \in S(\G,\Z)$}.
\end{equation}

Let
\begin{equation*}
    \Psi := \Psi^\G_\Z \subseteq \prod_{U \in S(\G,\Z)} \Lambda(U^{\ab})
\end{equation*}
denote the subgroup of all elements $(a_U)_U$ which satisfy \eqref{f:A1}, \eqref{f:A2}, and \eqref{f:A3}. So far we have shown the following.

\begin{lemma}\label{image-beta}
$\im (\beta) \subseteq \Psi$.
\end{lemma}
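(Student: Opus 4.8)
The plan is to observe that $\Psi$ was defined precisely as the set of tuples satisfying the three constraints \eqref{f:A1}, \eqref{f:A2}, \eqref{f:A3}, so it suffices to check that an arbitrary element $(a_U)_U = \beta(f)$ of $\im(\beta)$ — where $a_U = \beta_U(f) = \pr^U_{U^{\ab}}(\tr^\G_U(f))$ — meets all three. Since each $\beta_U$ is a trace followed by an abelianization projection, the whole argument rests on transitivity of the trace maps $\tr^\G_U$ together with the explicit formulas for traces of conjugacy classes in Remark \ref{trace}. Because all maps in sight are continuous and $\cO[[\Conj(\G)]] = \varprojlim \cO[\Conj(\G/N)]$ is topologically spanned by the classes $[g]_\G$, it is enough to verify each identity on such a class; I would reduce all three steps to that case.

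For \eqref{f:A1}, fix $U \subseteq V$ in $S(\G,\Z)$ with $[V,V] \subseteq U$ (so $U$ is normal in $V$) and use $\tr^\G_U = \tr^V_U \circ \tr^\G_V$ to reduce the claim $\tau^V_U(a_V) = \pi^V_U(a_U)$ to the commutativity of the square with vertices $\cO[[\Conj(V)]]$, $\Lambda(V^{\ab})$, $\cO[[\Conj(U)]]$, $\Lambda(U/[V,V])$ and arrows $\pr^V_{V^{\ab}}$, $\tr^{V^{\ab}}_{U/[V,V]}$, $\tr^V_U$, $\pr^{U^{\ab}}_{U/[V,V]} \circ \pr^U_{U^{\ab}}$. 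Evaluating at $[g]_V$: going down-then-right, Remark \ref{trace}.i sends $[g]_V$ (for $g \in U$) to $\sum_{x \in V/U}[xgx^{-1}]_U$ and thence to $\sum_{x\in V/U} xgx^{-1}[V,V]$ in $\Lambda(U/[V,V])$; going right-then-down, Remark \ref{trace}.iii sends $[g]_V$ to $g[V,V]$ and thence to $[V:U]\, g[V,V]$. These agree because conjugation is trivial modulo $[V,V]$, and both sides vanish when $g \notin U$.

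For \eqref{f:A2}, use the evident commutativity of the triangle relating $\tr^\G_U$, $\tr^\G_{gUg^{-1}}$ and conjugation by $g$ (both legs send $[h]_\G$ to the same sum over cosets), then apply $\pr^{\,\cdot}_{\,\cdot^{\ab}}$ and read off $a_{gUg^{-1}} = g a_U g^{-1}$. For \eqref{f:A3}, factor $\tr^\G_U = \tr^{N(U)}_U \circ \tr^\G_{N(U)}$, so $a_U$ lies in the image of $\pr^U_{U^{\ab}} \circ \tr^{N(U)}_U$; evaluating the latter on $[h]_{N(U)}$ via Remark \ref{trace}.i yields $\sigma^{N(U)}_U(h[U,U])$ for $h \in U$ and $0$ otherwise, so its image — hence $a_U$ — lies in $\im(\sigma^{N(U)}_U)$. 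Having checked all three conditions, $(a_U)_U \in \Psi$, which is the claim.

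The one genuinely delicate point is \eqref{f:A1}: one must match a trace, which a priori records an honest sum over cosets $x \in V/U$ of conjugates $xgx^{-1}$, against the map $\tau^V_U$ which on $\Lambda(V^{\ab})$ is merely multiplication by $[V:U]$ — and the reconciliation uses crucially that these conjugates collapse to a single element after passing to $U/[V,V]$. The other two conditions are bookkeeping with transitivity of traces. I would also flag that this lemma only asserts $\im(\beta) \subseteq \Psi$; surjectivity of $\beta$ onto $\Psi$ is a separate, harder matter.
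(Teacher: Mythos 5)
Your proposal matches the paper's argument step for step: transitivity of traces plus Remark \ref{trace}.i/iii on classes $[g]_V$ for \eqref{f:A1}, the conjugation triangle for \eqref{f:A2}, and the factorization of $\tr^\G_U$ through $N(U)$ together with Remark \ref{trace}.i for \eqref{f:A3}. The paper in fact establishes these three inclusions inline before stating the lemma as a summary, exactly as you lay out, so the approaches are essentially the same.
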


Our goal is to show that $\beta$ induces an isomorphism $\cO[[\Conj(\G)]] \cong \Psi$. For this purpose an important technical role is played by the subset $C(\G,\Z)$ of all $U \in S(\G,\Z)$ such that
$U/\Z$ is cyclic. Let $U \in C(\G,\Z)$. Then $U$ can be generated by $\Z$ and at most one
additional element, and hence is abelian. We introduce the $\Lambda(\Z)$-linear maps
\begin{equation*}
    \eta_U : \Lambda(U) = \oplus_{g \in U/\Z}  \Lambda(\Z)g \longrightarrow \Lambda(U)
\end{equation*}
with
\begin{equation*}
    \eta_U | \Lambda(\Z)g :=
    \begin{cases}
    \id & \textrm{if $g$ generates $U/\Z$}, \\
    0 & \textrm{otherwise}
    \end{cases}
\end{equation*}
and
\begin{align*}
    \delta_U : \Lambda(U) & \longrightarrow \cO[[\Conj(\G)]] \otimes \mathbb{Q} \\
    g & \longmapsto \frac{1}{[\G :U]} [g]_\G \ .
\end{align*}
Let
\begin{equation*}
    \pr_{cyc} : \prod_{U \in S(\G,\Z)} \Lambda(U^{\ab}) \longrightarrow \prod_{U \in C(\G,\Z)} \Lambda(U)
\end{equation*}
denote the obvious projection map and define the $\Lambda(\Z)$-linear map
\begin{align*}
    \delta : \prod_{U \in C(\G,\Z)} \Lambda(U) & \longrightarrow \cO[[\Conj(\G)]] \otimes \mathbb{Q} \\
    (a_U)_U & \longmapsto \sum_U \delta_U(\eta_U(a_U)) \ .
\end{align*}

\begin{lemma}\label{inverse}
$\delta \circ \pr_{cyc} \circ \beta = \id$.
\end{lemma}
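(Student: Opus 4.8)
The plan is to compute $\delta \circ \pr_{cyc} \circ \beta$ directly on the topological generators $[h]_{\G}$ of $\cO[[\Conj(\G)]]$, where $h$ runs over $\G$, and to show the result is again $[h]_{\G}$. Unravelling the definitions, for a fixed $h \in \G$ we have
\begin{equation*}
    (\pr_{cyc} \circ \beta)([h]_{\G}) = \big( \pr^U_{U^{\ab}}(\tr^\G_U([h]_{\G})) \big)_{U \in C(\G,\Z)} \ ,
\end{equation*}
and since each $U \in C(\G,\Z)$ is abelian, $\pr^U_{U^{\ab}} = \id$ and
\begin{equation*}
    \beta_U([h]_{\G}) = \tr^\G_U([h]_{\G}) = \sum_{\substack{x \in \G/U \\ x^{-1}hx \in U}} x^{-1}hx \in \Lambda(U) \ .
\end{equation*}
Applying $\eta_U$ kills every term $x^{-1}hx$ that does not generate $U/\Z$, and applying $\delta_U$ sends each surviving term $x^{-1}hx$ to $\frac{1}{[\G:U]}[x^{-1}hx]_\G = \frac{1}{[\G:U]}[h]_\G$. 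Hence
\begin{equation*}
    (\delta \circ \pr_{cyc} \circ \beta)([h]_{\G}) = \Big( \sum_{U \in C(\G,\Z)} \frac{1}{[\G:U]} \, \#\{ x \in \G/U : x^{-1}hx \in U \text{ and generates } U/\Z \} \Big) \cdot [h]_\G \ ,
\end{equation*}
so everything comes down to the combinatorial identity asserting that the scalar in parentheses equals $1$.

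To prove that identity I would reorganize the sum over pairs $(U,x)$ with $U \in C(\G,\Z)$, $x \in \G/U$, $x^{-1}hx \in U$ and $x^{-1}hx$ generating $U/\Z$. Given such a pair, the element $g := x^{-1}hx$ together with $\Z$ generates $U$, so $U = U(g) := \langle \Z, g\rangle$ is determined by $g$; thus $U$ is redundant data and the sum is really over those $x \in \G$ (modulo the right action of $U(x^{-1}hx)$) for which $x^{-1}hx$ and $\Z$ generate a group containing $x^{-1}hx$ as a generator modulo $\Z$ — which is automatic. So the sum becomes $\sum_{x} \frac{1}{[\G:U(x^{-1}hx)]}$ where $x$ runs over $\G$ modulo the equivalence $x \sim x u$ for $u \in U(x^{-1}hx)$. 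Writing $[\G:U(x^{-1}hx)]^{-1} = |U(x^{-1}hx)|/|\G|$ (after passing to a finite quotient; see below) this is $\frac{1}{|\G|}\sum_{x \in \G} 1 = 1$, because each equivalence class of size $|U(x^{-1}hx)|$ contributes exactly $|U(x^{-1}hx)| \cdot \frac{1}{[\G:U(x^{-1}hx)]} \cdot \frac{1}{|U|}\cdot|U| $ — more cleanly: summing $\frac{1}{[\G:U]}$ over a class of $[\G:U]$-many cosets reconstituted as $|U|/|\G|$ per coset gives $1$ per class only once we account for how classes partition $\G$. The clean way to phrase it: the map $x \mapsto x^{-1}hx$ identifies $\{x : x^{-1}hx \in U\}/U$ with $\Conj_{\G}(h) \cap U$ fibered appropriately, and a double-counting over $\{(x, x') : x^{-1}hx = x'^{-1}hx'\}$-type relation yields the normalization.

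Of course, all of this must be carried out in a finite quotient $\G/N$ and then passed to the limit: fix an open normal $N \trianglelefteq \G$ with $N \subseteq \Z$, replace $\G, \Z, h$ by $\G/N, \Z/N, hN$, note that $C(\G,\Z) \to C(\G/N,\Z/N)$ and the indices $[\G:U]$ are unchanged, and observe that $\delta, \eta_U, \pr_{cyc}, \beta$ are all compatible with the transition maps. The main obstacle is the bookkeeping in the combinatorial identity — correctly matching the $\frac{1}{[\G:U]}$ weight against the number of conjugates $x^{-1}hx$ that land in $U$ and generate $U/\Z$, across all $U \in C(\G,\Z)$ simultaneously. I expect this to follow from the observation that an element $g \in \G/N$ lies in a unique minimal $U \in C(\G,\Z)$ (namely $U(g) = \langle \Z, g\rangle$) but in several larger ones, together with a Möbius/inclusion-exclusion argument over the cyclic subgroups of the finite cyclic-by-$\Z$ poset — or, more likely in the author's hands, a slicker direct count; the rest of the proof is the routine verification above.
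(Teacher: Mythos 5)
Your initial reduction coincides with the paper's: after unravelling the definitions one is left to show that the scalar
$\sum_{U\in C(\G,\Z)}\tfrac{1}{[\G:U]}\,\#\{\,xU\in\G/U : x^{-1}hx \in U\ \textrm{generates}\ U/\Z\,\}$
equals $1$, and both you and the paper observe that the generating condition forces $U=\langle\Z,x^{-1}hx\rangle=x^{-1}U_h x$, where $U_h:=\langle\Z,h\rangle$. You then close the count differently from the paper. The paper groups the admissible cosets $xU$ by which conjugate $U$ of $U_h$ they determine: each such $U$ carries exactly $[N(U_h):U_h]$ admissible cosets, so the inner sum is $1/[\G:N(U_h)]$, and summing over the $[\G:N(U_h)]$ conjugates gives $1$ directly in terms of indices, with no passage to a finite quotient. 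You instead unfold the sum over cosets to a sum over elements $x$ of a finite quotient, weighting by $1/|U(x^{-1}hx)|$ and using $1/[\G:U]=|U|/|\G|$, so the weights cancel and the sum is $\sum_{x}1/|\G|=1$. Both counts are valid; the common key fact is that each admissible coset $xU$ is exactly one equivalence class $x\,U(x^{-1}hx)$ under your relation $\sim$ (this is where the abelianness of $U$ makes the generating condition constant on cosets), and the paper's version is a little cleaner since it never needs $|\G|$ itself. Two small criticisms of the write-up: the fragment ``$|U(x^{-1}hx)|\cdot\tfrac{1}{[\G:U(x^{-1}hx)]}\cdot\tfrac{1}{|U|}\cdot|U|$'' and the ``$[\G:U]$-many cosets'' gloss do not parse as written — a class under $\sim$ is a single left coset of size $|U|$, not a union of $[\G:U]$ cosets — and the closing hedge about a M\"obius/inclusion-exclusion argument is unnecessary: the direct weighted count you already sketched, or the paper's orbit count over conjugates of $U_h$, closes the argument with no inclusion-exclusion at all.
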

\begin{proof}
Let $g \in \G$ and put $U_g := \, <g,\Z> \, \in C(\G,\Z)$. We compute
\begin{align*}
    \delta \circ \pr_{cyc} \circ \beta ([g]_\G) & = \sum_{U \in C(\G,\Z)} \delta_U \circ \eta_U \circ \tr^\G_U ([g]_\G) \\
    & = \sum_U \sum _{x \in \G/U, x^{-1}gx \in U} \delta_U \circ \eta_U (x^{-1}gx) \\
    & = \sum_U \sum_{x \in \G/U, x^{-1}U_g x = U} \frac{1}{[\G :U]} [g]_\G \\
    & = \sum_{U\, \textrm{conjugate to}\; U_g} \frac{1}{[\G :N(U_g)]} [g]_\G \\
    & = [g]_\G \ .
\end{align*}

\end{proof}

It follows that the map $\beta$ is injective.

At this point we need further assumptions:
\begin{equation}\label{f:H2}\tag{H2}
    \textrm{$\G$ is a pro-$p$ group.}
\end{equation}
\begin{equation}\label{f:H3}\tag{H3}
    \begin{split}
    \textrm{There is a system of representatives $\R \subseteq \G$ for the cosets} \\
     \textrm{in $\G/\Z$ which contains 1 and consists of full $\G$-orbits. \ }
    \end{split}
\end{equation}
(In the one dimensional case $\G = H \rtimes \gamma^{\mathbb{Z}_p}$ with $\Z = \gamma^{p^e \mathbb{Z}_p}$ we may take $\R := \{ H \times \{\gamma^j : 0 \leq j < p^e \}$.)
For $U \in S(\G,\Z)$ we put $\R_U := \R \cap U$. We also introduce the subgroup
\begin{equation*}
    \Psi_{cyc} \subseteq \prod_{U \in C(\G,\Z)} \Lambda(U)
\end{equation*}
of all elements which satisfy \eqref{f:A1}, \eqref{f:A2}, and \eqref{f:A3} for all (pairs of) subgroups in $C(\G,\Z)$.

\begin{lemma}\label{image-delta}
$\delta | \Psi_{cyc}$ is injective with $\delta (\Psi_{cyc}) \subseteq \cO[[\Conj(\G)]]$.
\end{lemma}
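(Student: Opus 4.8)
I want to prove two things about the map $\delta$ restricted to $\Psi_{cyc}$: first that $\delta(\Psi_{cyc}) \subseteq \cO[[\Conj(\G)]]$ (i.e.\ no denominators survive), and second that $\delta|\Psi_{cyc}$ is injective. The two are somewhat independent, and I expect the integrality statement to be the real obstacle.

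**Integrality.** Fix $(a_U)_U \in \Psi_{cyc}$ and write each $a_U \in \Lambda(U) = \bigoplus_{g \in \R_U}\Lambda(\Z)g$ in terms of the chosen representatives. Only the components $a_U$ with $\eta_U(a_U) \neq 0$ contribute, i.e.\ those for which the coefficient of a generator $g$ of $U/\Z$ is nonzero; and $\delta_U$ sends $\Lambda(\Z)g \ni \lambda g \mapsto \frac{1}{[\G:U]}\lambda[g]_\G$. So
\begin{equation*}
    \delta((a_U)_U) = \sum_{U \in C(\G,\Z)} \; \sum_{\substack{g \in \R_U \\ g \text{ generates } U/\Z}} \frac{1}{[\G:U]}\, (\text{coeff.\ of } g \text{ in } a_U)\, [g]_\G \ .
\end{equation*}
Because $\R$ consists of full $\G$-orbits (hypothesis \eqref{f:H3}), for each $g$ the conjugates $xgx^{-1}$, $x\in \G/U_g$, all lie in $\R$ — wait, they lie in the $\G$-orbit of $g$, hence in $\R$ — and condition \eqref{f:A2} forces the coefficient of $xgx^{-1}$ in $a_{U_{xgx^{-1}}}$ to be the conjugate of the coefficient of $g$ in $a_{U_g}$, while $[xgx^{-1}]_\G = [g]_\G$. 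Thus the terms coming from a single $\G$-orbit all land on the same basis vector $[g]_\G$ and their number is $[\G:N(U_g)] = [\G:U_g]/|W(U_g)|$. After summing, the coefficient of $[g]_\G$ becomes $\frac{|W(U_g)|^{-1}}{[\G:U_g]}\cdot[\G:U_g]\cdot(\cdots)$ — no, more carefully: I must also use \eqref{f:A3}, which says $a_{U_g} \in \im(\sigma^{N(U_g)}_{U_g})$, i.e.\ its generator-component is a sum $\sum_{h \in N(U_g)/U_g} h(\cdots)h^{-1}$, and this built-in divisibility by $|W(U_g)|$ is exactly what cancels the $|W(U_g)|$ appearing after the orbit sum. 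Combined with the $\G$-orbit collapse producing the factor $[\G:N(U_g)]$, the $\frac{1}{[\G:U_g]}$ is killed and the coefficient of $[g]_\G$ is an honest element of $\Lambda(\Z)$. Running this over a set of orbit-representatives $g$ shows $\delta((a_U)_U) \in \cO[[\Conj(\G)]]$.

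**Injectivity.** Suppose $\delta((a_U)_U) = 0$. Each orbit of $g$'s contributes to exactly one basis vector $[g]_\G$ of $\cO[[\Conj(\G)]]$, and distinct orbits of generators (running over distinct $U \in C(\G,\Z)$ up to conjugacy, with $g$ a generator of $U/\Z$) contribute to distinct $[g]_\G$ — here I use that $g \in \R$ determines $U_g = \langle g, \Z\rangle$, so the basis vectors appearing are indexed faithfully. Hence vanishing of $\delta((a_U)_U)$ forces, for every $U \in C(\G,\Z)$ and every generator $g$ of $U/\Z$ in $\R$, the vanishing of the $g$-component of $a_U$ (the $\Lambda(\Z)$-module $\Lambda(\Z)[\Conj(\G)]$ being free). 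So $\eta_U(a_U) = 0$ for all $U$; but condition \eqref{f:A1}, relating $a_U$ for $\Z \subseteq U' \subsetneq U$ via $\tau$ and $\pi$, lets me reconstruct the non-generator components of each $a_U$ from the generator-components of the various $U' \supseteq \Z$ with $U'$ cyclic over $\Z$ — an induction on $[U:\Z]$ downward, or rather: the components of $a_U$ along $\Lambda(\Z)g$ for $g$ not generating $U/\Z$ are controlled, via \eqref{f:A1} applied to the pair $U_g \subseteq U$, by $a_{U_g}$, whose generator-component we have just shown vanishes. Thus every component of every $a_U$ vanishes, i.e.\ $(a_U)_U = 0$.

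**Main obstacle.** The delicate point is the integrality bookkeeping: one must match the $\frac{1}{[\G:U]}$ coming from $\delta_U$ against two separate sources of divisibility — the $[\G:N(U)]$-fold repetition produced by summing over a full $\G$-orbit in $\R$ (this is where \eqref{f:A3} and \eqref{f:H3} are both used) and the $|W(U)|$-divisibility built into $\im(\sigma^{N(U)}_U)$ by \eqref{f:A3} — and verify that together they account for \emph{all} of $[\G:U] = [\G:N(U)]\cdot|W(U)|$, with nothing left over. I'd expect to organize this by first treating the case $\Z = \G$ (trivial), then reducing the general sum to a sum over orbit representatives and invoking \eqref{f:A2}, \eqref{f:A3} exactly as in the computation in Lemma \ref{inverse}, but now keeping track of the $\Lambda(\Z)$-coefficients rather than working with a single class $[g]_\G$.
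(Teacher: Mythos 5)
Your overall strategy matches the paper's: group the sum defining $\delta((a_U)_U)$ by $\G$-conjugacy, use \eqref{f:A2} together with \eqref{f:H3} to identify the conjugate terms, and use \eqref{f:A3} to supply the divisibility that kills the denominator $[\G:U]$. Your injectivity argument is a small, clean variant of the paper's: once the generator-coefficients $a_{U,g}$ are shown to vanish, you apply \eqref{f:A1} directly to the pair $U_g\subseteq U$ for each non-generator $g\in\R_U$, reading off $[U:U_g]\,a_{U,g}=(\text{coeff.\ of }g\text{ in }a_{U_g})=0$, whereas the paper applies \eqref{f:A1} only to the index-$p$ subgroup $U'\subseteq U$ (so that $a_U$ is supported on $\R_{U'}$ and $a_{U'}=p\,a_U$) and inducts upward from $\Z$. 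Both are correct.

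The integrality bookkeeping as you have written it is internally inconsistent, though. You count the terms in a $\G$-orbit of a generator $g$ as $[\G:N(U_g)]$, and claim \eqref{f:A3} forces $|W(U_g)|$-divisibility of the $g$-coefficient of $a_{U_g}$. Neither claim is right: writing $C$ for the centralizer of $g$ in $\G$, one has $U_g\subseteq C\subseteq N(U_g)$ with both inclusions possibly proper, the orbit of $g$ has $[\G:C]$ elements (not $[\G:N(U_g)]$), and \eqref{f:A3}, which writes $a_{U_g}=\sum_{h\in W(U_g)}h\,b_{U_g}\,h^{-1}$, only forces divisibility of the $g$-coefficient by $[C:U_g]$, the size of the stabilizer of $g$ in $W(U_g)$ (an orbit--stabilizer count), not by all of $|W(U_g)|$. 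The two errors compensate since $[\G:C]\cdot[C:U_g]=[\G:U_g]=[\G:N(U_g)]\cdot|W(U_g)|$, so your conclusion is true, but the intermediate steps do not hold as stated. The paper's proof avoids this centralizer accounting entirely by never unwinding $\delta_U(\eta_U(a_U))$ into individual $[g]_\G$-coefficients: because $\delta_U$ and $\eta_U$ are both $N(U)$-conjugation-invariant, one has $\delta_U(\eta_U(\sigma_U(b_U)))=|W(U)|\,\delta_U(\eta_U(b_U))$ immediately, and the $[\G:N(U)]$ conjugate subgroups contribute equal terms by \eqref{f:A2}, so each $\G$-orbit of subgroups contributes $[\G:U]\,\delta_U(\eta_U(b_U))=[\eta_U(b_U)]_\G\in\cO[[\Conj(\G)]]$. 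That aggregate version is the cleaner way to organize the cancellation you are after; the coefficient-by-coefficient route can be made rigorous but requires the centralizer correction above.
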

\begin{proof}
Let $(a_U)_U \in \Psi_{cyc}$. We first assume that
\begin{equation*}
    \delta((a_U)_U) = \sum_{U \in C(\G,\Z)} \delta_U (\eta_U(a_U)) = 0 \ .
\end{equation*}
\textit{Step 1:} We show that $\delta_U(\eta_U(a_U)) = 0$ for any $U \in C(\G,\Z)$. By definition $\delta_U(\eta_U(a_U))$ is supported on $\bigcup_{g \in \G} gUg^{-1}$. So, if $U_1, \ldots, U_m$ are representatives for the $\G$-orbits in $C(\G,\Z)$, then we obtain
\begin{equation*}
    \sum_{g \in \G/N(U_i)} \delta_{gU_i g^{-1}} (\eta_{gU_i g^{-1}}(a_{gU_ig^{-1}})) = 0 \qquad\textrm{for any $1 \leq i \leq m$}.
\end{equation*}
But \eqref{f:A2} implies that
\begin{equation*}
    \delta_{gU_i g^{-1}} (\eta_{gU_i g^{-1}}(a_{gU_ig^{-1}})) = g \delta_{U_i} (\eta_{U_i}(a_{U_i})) g^{-1} = \delta_{U_i}(\eta_{U_i}(a_{U_i}))
\end{equation*}
for any $1 \leq i \leq m$ and $g \in \G$.\\
\textit{Step 2:} We show that $a_U = 0$ for any $U \in C(\G,\Z)$. Since
\begin{equation*}
    \Lambda(U) = \oplus_{h \in \R_U} \Lambda(\Z)h
\end{equation*}
we may write
\begin{equation*}
    a_U = \sum_{h \in \R_U} a_h h \qquad\textrm{with $a_h \in \Lambda(\Z)$}.
\end{equation*}
For $g \in N(U)$ we have
\begin{equation*}
    a_U = a_{gUg^{-1}} = ga_U g^{-1} = \sum_{h \in \R_U} a_h ghg^{-1} = \sum_{h \in \R_U} a_{g^{-1}hg} h
\end{equation*}
by \eqref{f:A2} (the last identity is the point where we need the assumption \eqref{f:H3}) and hence $a_{g^{-1}hg} = a_h$. It follows that
\begin{equation*}
    a_U = \sum_{\xi \in N(U) \backslash \R_U} a_\xi (\sum_{h \in \xi} h) \qquad\textrm{with $a_\xi \in \Lambda(\Z)$}.
\end{equation*}
Choosing elements $h_\xi \in \xi$ we then have
\begin{equation*}
    0 = \delta_U(\eta_U(a_U)) = \sum_{\xi \in N(U) \backslash \R_U ,\, U=<h_\xi,\Z>} a_\xi \frac{|\xi|}{[\G : U]} [h_\xi]_\G \ .
\end{equation*}
If $h_\xi$ generates $U/\Z$ then $\xi$ is the intersection with $\R_U$ of a full $\G$-orbit in $\R$. It follows that $a_\xi = 0$ in this case. This shows first of all that $a_\Z = 0$. Secondly, for $U \neq \Z$ we obtain
\begin{equation*}
    a_U = \sum_{h \in \R_{U'}} a_h h
\end{equation*}
where $U' \in C(\G,\Z)$ is such that $U'/\Z$ is the unique subgroup of index $p$ in the cyclic $p$-group $U/\Z$. On the other hand we deduce from \eqref{f:A1} that
\begin{equation*}
    a_{U'} = \tr^U_{U'} (a_U) = p a_U \ .
\end{equation*}
The claim now follows by induction.

This establishes the asserted injectivity. For the claim about the image we note that, by \eqref{f:A3}, we have
\begin{equation*}
    a_U = \sigma_U(b_U) \qquad\textrm{with $b_U \in \Lambda(U)$}
\end{equation*}
for any $U \in C(\G,\Z)$. Hence
\begin{equation*}
    \delta_U(\eta_U(a_U)) = \delta_U(\eta_U(\sum_{g \in W(U)} gb_U g^{-1})) = [N(U):U] \delta_U(\eta_U(b_U)) \ .
\end{equation*}
Using \eqref{f:A2} we further obtain that
\begin{align*}
    \sum_{g \in \G/N(U)} \delta_{gUg^{-1}}(\eta_{gUg^{-1}}(a_{gUg^{-1}})) & = [\G :N(U)] \delta_U(\eta_U(a_U)) \\
    & = [\G :U] \delta_U(\eta_U(b_U))
\end{align*}
lies in $\cO[[\Conj(\G)]]$.
\end{proof}

\begin{lemma}\label{injection}
The map $\pr_{cyc}$ restricts to an injection $\Psi\hookrightarrow\Psi_{cyc}$.
\end{lemma}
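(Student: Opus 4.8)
The plan is to establish the two halves of the statement separately: first that $\pr_{cyc}$ carries $\Psi$ into $\Psi_{cyc}$, and then that the resulting map is injective.

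The inclusion $\pr_{cyc}(\Psi) \subseteq \Psi_{cyc}$ is purely formal. Whenever $U \subseteq V$ both lie in $C(\G,\Z)$ the two groups are abelian, so $[V,V] = 1 \subseteq U$ and $U^{\ab} = U$, $V^{\ab} = V$; hence the condition \eqref{f:A1} for this pair is literally the instance of \eqref{f:A1} that holds for any element of $\Psi$, and likewise \eqref{f:A2} and \eqref{f:A3} for subgroups in $C(\G,\Z)$ are instances of the defining conditions of $\Psi$ (here one uses that $gUg^{-1} \in C(\G,\Z)$ for $U \in C(\G,\Z)$ and $g \in \G$, since $\Z$ is central). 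So every $(a_U)_U \in \Psi$ restricts to an element of $\Psi_{cyc}$.

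For injectivity I would take $(a_U)_U \in \Psi$ with $a_U = 0$ for all $U \in C(\G,\Z)$ and show $a_U = 0$ for every $U \in S(\G,\Z)$ by induction on $[U:\Z]$. If $U/\Z$ is cyclic then $U \in C(\G,\Z)$ and $a_U = 0$ by hypothesis; this settles in particular the base case $U = \Z$. Assume now that $U/\Z$ is not cyclic. By \eqref{f:H2} the group $U$, and hence the finite group $U/\Z$, is a pro-$p$ group, and a non-cyclic finite $p$-group has non-cyclic abelianization; therefore $(U/\Z)^{\ab} = U^{\ab}/\overline{\Z}$, where $\overline{\Z}$ denotes the image of $\Z$ in $U^{\ab}$, is non-cyclic, and in particular $\Z[U,U] \subsetneq U$ (if $\Z[U,U] = U$ then $[U/\Z,U/\Z] = U/\Z$, impossible for a non-trivial finite $p$-group by the Frattini argument). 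Writing $\Lambda(U^{\ab}) = \bigoplus_j \Lambda(\overline{\Z})\,\overline{h}_j$ for a set $\{\overline{h}_j\}$ of coset representatives of $U^{\ab}/\overline{\Z}$, I expand $a_U = \sum_j c_j \overline{h}_j$ with $c_j \in \Lambda(\overline{\Z})$. Fix $j$. Since $U^{\ab}/\overline{\Z}$ is non-cyclic, $\langle \overline{h}_j, \overline{\Z}\rangle$ is a proper subgroup of $U^{\ab}$; let $V_j \subseteq U$ be its preimage, so that $[U,U] \subseteq V_j$ and $\Z \subseteq V_j \subsetneq U$. The inductive hypothesis gives $a_{V_j} = 0$, whence \eqref{f:A1} applied to the pair $V_j \subseteq U$ yields $\tau^U_{V_j}(a_U) = \pi^U_{V_j}(a_{V_j}) = 0$. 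By Remark \ref{trace}.iii the $\Lambda(\overline{\Z})$-linear map $\tau^U_{V_j} = \tr^{U^{\ab}}_{V_j/[U,U]}$ sends $\overline{h}_{j'}$ to $[U:V_j]\,\overline{h}_{j'}$ when $\overline{h}_{j'} \in V_j/[U,U]$ and to $0$ otherwise, so that $0 = \tau^U_{V_j}(a_U) = [U:V_j]\sum_{j'\,:\,\overline{h}_{j'} \in V_j/[U,U]} c_{j'}\,\overline{h}_{j'}$. Comparing coefficients in the free $\Lambda(\overline{\Z})$-module $\Lambda(U^{\ab})$ and using that multiplication by the integer $[U:V_j]$ (a power of $p$) is injective on $\Lambda(U^{\ab})$, which is $\pi$-torsion-free over $\cO$, I get $c_{j'} = 0$ for all such $j'$; in particular $c_j = 0$. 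As $j$ was arbitrary, $a_U = 0$, completing the induction.

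The main obstacle is the inductive step, and specifically the passage to the ``sandwich'' subgroups $V_j$: one must arrange $[U,U] \subseteq V_j \subsetneq U$, because this is exactly what makes \eqref{f:A1} applicable and gives $\tau^U_{V_j}$ the diagonal form of Remark \ref{trace}.iii, and the existence of such a proper $V_j$ for every basis vector $\overline{h}_j$ is precisely the point where non-cyclicity of $(U/\Z)^{\ab}$ --- hence hypothesis \eqref{f:H2} --- is indispensable. The rest is routine bookkeeping with the transitivity and diagonal trace formulas already established.
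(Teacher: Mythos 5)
Your proof is correct and follows essentially the same route as the paper: both arguments exploit non-cyclicity of $(U/\Z)^{\ab}$ (from hypothesis \eqref{f:H2} and the Huppert-type fact about non-cyclic finite $p$-groups) to produce a proper intermediate subgroup containing $\Z$, $[U,U]$ and a given basis representative, and then apply \eqref{f:A1} together with Remark \ref{trace}.iii to pick off one coefficient at a time. The only difference is cosmetic: the paper runs a minimal-counterexample argument and chooses an index-$p$ normal subgroup, whereas you run a direct induction on $[U:\Z]$ and take the possibly larger subgroup $V_j$ generated (modulo $[U,U]$) by $\overline{h}_j$ and $\overline{\Z}$.
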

\begin{proof}
It is clear that $\pr$ restricts to a map $\Psi \rightarrow \Psi_{cyc}$. To establish its injectivity we argue by contradiction. We assume that $(a_U)_U\in\Psi$ satisfies $a_U=0$ for all $U$ in $C(\G,\Z)$ and that there exists a $V$ in $S(\G,\Z)\setminus C(\G,\Z)$ such that $a_V\neq 0$. Moreover, we may and do assume that $V$ has minimal order $|V/\Z|$ with this property. Let $R \subseteq V$ denote a set of representatives for the cosets in $V/\Z[V,V]$; we write $\bar{h} \in V^{\ab}$ for the image of $h \in R$. We have
\begin{equation*}
    a_V = \sum_{h \in R} a_h \bar{h} \qquad\textrm{with $a_h\in \im(\Lambda(\Z) \rightarrow \Lambda(V^{\ab}))$}.
\end{equation*}
To achieve the contradiction we will show that all coefficients $a_h$ have to vanish. Fix $h_0 \in R$. Since $V/\Z$ is not cyclic (but is a $p$-group) its abelianization $(V/\Z)^{\ab}$ is not cyclic either by \cite{Hup} III.7.1.c. Hence we find a normal subgroup $\Z \subseteq U \unlhd V$ of index $p$ such that $h_0 \in U$. In particular, $[V,V] \subseteq U$. By the minimality of $V$ we have $a_U = 0$. Then
\begin{equation*}
    0 = \tau^V_U (a_V) = \sum_{h \in R} a_h \tau^V_U (\bar{h}) = p \sum_{h \in R \cap U} a_h \bar{h}
\end{equation*}
by \eqref{f:A1} and Remark \ref{trace}.iii. It follows that $a_h = 0$ for any $h \in R \cap U$ and, in particular, $a_{h_0} = 0$.
\end{proof}

The last four lemmas together imply the following fact.

\begin{theorem}\label{iso-additive}
All three maps in the commutative diagram
\begin{equation*}
    \xymatrix@R=0.5cm{
                &     \Psi    \ar[dd]^{\pr_{cyc}}_{\cong}     \\
  \cO[[\Conj(\G)]] \ar[ur]^-{\beta}_-{\cong} \ar[dr]_-{\pr_{cyc}\circ\beta}^-{\cong}                 \\
                &      \Psi_{cyc}                  }
\end{equation*}
are isomorphisms.
\end{theorem}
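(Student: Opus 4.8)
The plan is to run a short formal diagram chase; the four preceding lemmas contain all the substance, and nothing geometric remains. Write $A:=\cO[[\Conj(\G)]]$ for brevity. By Lemma~\ref{image-beta} the map $\beta$ takes values in $\Psi$, and by Lemma~\ref{injection} the map $\pr_{cyc}$ sends $\Psi$ into $\Psi_{cyc}$; hence $g:=\pr_{cyc}\circ\beta$ is a well-defined $\Lambda(\Z)$-linear map $A\to\Psi_{cyc}$. By Lemma~\ref{image-delta} the map $\delta$ restricts to an \emph{injective} map $f:=\delta|\Psi_{cyc}\colon\Psi_{cyc}\to A$, and by Lemma~\ref{inverse} one has $f\circ g=\delta\circ\pr_{cyc}\circ\beta=\id_A$.

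First I would deduce that $g=\pr_{cyc}\circ\beta$ is an isomorphism. From $f\circ g=\id_A$ it follows formally that $g$ is injective and that $f$ is surjective; combined with the injectivity of $f$ from Lemma~\ref{image-delta}, the map $f$ is bijective. Now for any $x\in\Psi_{cyc}$ we have $f\big(g(f(x))\big)=(f\circ g)(f(x))=f(x)$, and injectivity of $f$ gives $g(f(x))=x$; hence $g\circ f=\id_{\Psi_{cyc}}$. Together with $f\circ g=\id_A$ this shows $f$ and $g$ are mutually inverse isomorphisms, so in particular $\pr_{cyc}\circ\beta\colon A\xrightarrow{\ \cong\ }\Psi_{cyc}$.

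Then I would split this isomorphism through $\Psi$. Since $\pr_{cyc}\circ\beta$ is injective, $\beta\colon A\to\Psi$ is injective. Since $\pr_{cyc}\circ\beta$ is surjective and $\beta(A)\subseteq\Psi$, the restriction $\pr_{cyc}\colon\Psi\to\Psi_{cyc}$ is surjective; it is injective by Lemma~\ref{injection}, hence an isomorphism. Consequently $\beta=(\pr_{cyc})^{-1}\circ(\pr_{cyc}\circ\beta)\colon A\xrightarrow{\ \cong\ }\Psi$ is an isomorphism as a composite of two isomorphisms, and all three arrows in the diagram are isomorphisms.

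I do not expect a genuine obstacle here: every nontrivial assertion has already been isolated into Lemmas~\ref{image-beta}--\ref{injection}. The only point needing a moment's care is the cancellation step, where one uses the injectivity of $\delta|\Psi_{cyc}$ (Lemma~\ref{image-delta}) in the correct direction to upgrade the one-sided identity $f\circ g=\id_A$ to $g\circ f=\id_{\Psi_{cyc}}$; without that injectivity one would only learn that $\pr_{cyc}\circ\beta$ is a split injection rather than an isomorphism.
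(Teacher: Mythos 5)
Your proof is correct and follows exactly the route the paper intends: the paper states only that ``the last four lemmas together imply the following fact,'' and your diagram chase supplies precisely that deduction, using Lemmas \ref{image-beta}, \ref{inverse}, \ref{image-delta}, and \ref{injection} as the paper does. No discrepancy.
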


\begin{remark}\label{qp-version}
Let $\Psi_{\mathbb{Q}_p}$ denote the subset of $\prod_{U\in S(\G,\Z)}
\Lambda(U^{\ab})\otimes_{\mathbb{Z}_p} \mathbb{Q}_p$ consisting of those tuples $(a_U)_U$ which
satisfy the natural analogues of \eqref{f:A1} and \eqref{f:A2}. Note that any such tuple automatically satisfies
$a_U\in \im(\sigma^{N(U)}_U)\otimes_{\mathbb{Z}_p} \mathbb{Q}_p$ for all $U$ in $S(\G,\Z)$, because due to
\eqref{f:A2} one has $\sigma^{N(U)}_U(a_U)=|W(U)| a_U$. Hence $\Psi_{\mathbb{Q}_p}$ can be identified with
\begin{equation*}
    \Psi \otimes_{\mathbb{Z}_p}
\mathbb{Q}_p\subseteq\prod_{U\in S(\G,\Z)} \Lambda(U^{\ab})\otimes_{\mathbb{Z}_p} \mathbb{Q}_p \ .
\end{equation*}
Therefore, by Lemma \ref{injection} the projection $\pr_{cyc} \otimes_{\mathbb{Z}_p} \mathbb{Q}_p$ induces again an injection
\begin{equation*}
    \Psi_{\mathbb{Q}_p}\hookrightarrow \prod_{U\in C(\G,\Z)}
\Lambda(U)\otimes_{\mathbb{Z}_p} \mathbb{Q}_p \ .
\end{equation*}
Now assume that
$(a_U)_U\in \Psi_{\mathbb{Q}_p}$ satisfies   $a_U\in\im(\sigma_U)\subseteq \Lambda(U)$
for all $U$ in $C(\G,\Z)$. Then $(a_U)_U$  belongs already to $\Psi$, in particular all
$a_U$ are integral! Indeed, by the above injectivity   one has
\begin{equation*}
    (a_U)_U=\beta(\pr_{cyc}\circ\beta)^{-1}
(\pr_{cyc}\otimes_{\mathbb{Z}_p} \mathbb{Q}_p) ((a_U)_U)\in\Psi \ .
\end{equation*}
\end{remark}

\begin{remark}\label{eta-A3}
For any pair $W \subseteq V$ of subgroups of index $p$ in $C(\G,\Z)$ we have:
\begin{itemize}
  \item[i.] $\im(\sigma_W) \subseteq p \im(\sigma_V)$;
  \item[ii.] Let $(a_U)_U \in \prod_{U \in C(\G,\Z)} \Lambda(U)$ be an element which satisfies \eqref{f:A1}; then:
      \begin{itemize}
        \item[a.] $a_V = \eta_V(a_V) + \tfrac{1}{p} a_W$;
        \item[b.] $(a_U)_U$ satisfies \eqref{f:A3} if and only if   $(\eta_U(a_U))_U$ satisfies \eqref{f:A3}.
      \end{itemize}
\end{itemize}
\end{remark}
\begin{proof}
i. Using that obviously $N(V) \subseteq N(W)$ we compute
\begin{align*}
    \sigma_W(f) & = \sum_{g \in N(W)/W} gfg^{-1} = \sum_{g \in N(V)/W} \sum_{h \in R} ghfh^{-1}g^{-1} \\
    & = p \sum_{g \in N(V)/V} g(\sum_{h \in R} hfh^{-1})g^{-1} = p \sigma_V(\sum_{h \in R} hfh^{-1})
\end{align*}
for any $f \in \Lambda(W)$, where $R \subseteq N(W)$ is a fixed set of representatives for the right cosets of $N(V)$ in $N(W)$.

ii. By assumption we have $\tr^V_W (a_V) = a_W$. If we write $a_V = \sum_{h \in \R_V} a_h h$ with $a_h \in \Lambda(\Z)$ then, using Remark \ref{trace}.ii, we obtain
\begin{equation*}
    a_W = \tr^V_W (a_V) = \sum_{h \in \R_V} a_h \tr^V_W (h) = p \sum_{h \in \R_W} a_h h = p (a_V - \eta_V(a_V)) \ .
\end{equation*}
Since $\eta_\Z = \id$ the assertion in b. follows by induction from i. and a.
\end{proof}

Using Prop.\ \ref{finitefree} we may extend $B(\Z)$-linearly all the maps which we have considered above. In particular, also using Lemma \ref{B-comm} we have the $B(\Z)$-linear map
\begin{align*}
    \beta_B := \beta^{B(\G)}_{B(\Z)} : B(\G)/[B(\G), B(\G)] & \longrightarrow \prod_{U \in S(\G,\Z)} B(U^{\ab}) \\
    f & \longmapsto ((\id_{B(\Z)} \otimes \beta_U)(f))_U \ .
\end{align*}
In the right hand side we have the subgroup $\Psi_B$ of elements which satisfy the obvious analogs of the conditions \eqref{f:A1} - \eqref{f:A3}. Either by repeating the above arguments or by simply using that, according to Thm.\ \ref{skew-Laurent}.iii, $B(\Z)$ is flat over $\Lambda(\Z)$ we deduce the following consequence.

\begin{theorem}\label{iso-B-additive}
$\beta_B : B(\G)/[B(\G), B(\G)] \xrightarrow{\cong} \Psi_B$ is an isomorphism.
\end{theorem}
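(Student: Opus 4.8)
The plan is to deduce Theorem \ref{iso-B-additive} from Theorem \ref{iso-additive} by base change along the flat ring homomorphism $\Lambda(\Z) \to B(\Z)$ (Theorem \ref{skew-Laurent}.iii), so that none of the combinatorial arguments in Lemmas \ref{conj}--\ref{injection} needs to be repeated.

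First I would lay out the situation over $\Lambda(\Z)$. Because $\Z$ is open in $\G$, the index set $S(\G,\Z)$ is finite and each of $\cO[[\Conj(\G)]]$, $\Lambda(U^{\ab})$ and $\Lambda(U/[V,V])$ is a finitely generated module over the noetherian ring $\Lambda(\Z)$. By Theorem \ref{iso-additive} the map $\beta$ identifies $\cO[[\Conj(\G)]]$ with the submodule $\Psi \subseteq \prod_{U \in S(\G,\Z)} \Lambda(U^{\ab})$, and $\Psi$ is the intersection of three $\Lambda(\Z)$-submodules: condition \eqref{f:A1} defines the kernel of the $\Lambda(\Z)$-linear map $(a_U)_U \mapsto (\tau^V_U(a_V) - \pi^V_U(a_U))$ into $\prod \Lambda(U/[V,V])$, taken over the finitely many relevant pairs $U \subseteq V$; condition \eqref{f:A2} defines the kernel of the map $(a_U)_U \mapsto (a_{gUg^{-1}} - g a_U g^{-1})$, which is $\Lambda(\Z)$-linear since $\Z$ is central, taken over $U$ and the finitely many conjugates $gUg^{-1}$; and condition \eqref{f:A3} defines the submodule $\prod_U \im(\sigma^{N(U)}_U)$. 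Thus $\Psi$ is a finite intersection built from kernels and images of $\Lambda(\Z)$-linear maps between finitely generated $\Lambda(\Z)$-modules.

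Next I would apply the functor $B(\Z) \otimes_{\Lambda(\Z)} (-)$. It is exact by Theorem \ref{skew-Laurent}.iii; by Proposition \ref{finitefree} and the finiteness of $S(\G,\Z)$ it sends $\prod_U \Lambda(U^{\ab})$ to $\prod_U B(U^{\ab})$ and $\prod \Lambda(U/[V,V])$ to $\prod B(U/[V,V])$ compatibly with all the transition maps, and by Lemma \ref{B-comm}.ii --- after rewriting $B(\G_0) \otimes_{\Lambda(\G_0)} \cO[[\Conj(\G)]] = B(\Z) \otimes_{\Lambda(\Z)} \cO[[\Conj(\G)]]$ via Proposition \ref{finitefree} and $\Z \subseteq \G_0$ --- it sends $\cO[[\Conj(\G)]]$ to $B(\G)/[B(\G),B(\G)]$, with $\id_{B(\Z)} \otimes \beta = \beta_B$. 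Exactness immediately gives that $\beta_B$ is injective with image $B(\Z) \otimes_{\Lambda(\Z)} \Psi$, viewed as a submodule of $\prod_U B(U^{\ab})$. It then remains to check the equality $B(\Z) \otimes_{\Lambda(\Z)} \Psi = \Psi_B$ inside $\prod_U B(U^{\ab})$. Since $B(\Z)$ is flat over $\Lambda(\Z)$, base change commutes with the formation of kernels and images of linear maps and with finite intersections of submodules of a fixed module; feeding the description of $\Psi$ from the previous paragraph into this, and using that the base-changed maps $\id_{B(\Z)} \otimes \tau^V_U$, $\id_{B(\Z)} \otimes \pi^V_U$, $\id_{B(\Z)} \otimes (g \cdot g^{-1})$ and $\id_{B(\Z)} \otimes \sigma^{N(U)}_U$ are precisely the maps entering the definition of $\Psi_B$, one obtains $B(\Z) \otimes_{\Lambda(\Z)} \Psi = \Psi_B$. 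Hence $\beta_B$ is an isomorphism onto $\Psi_B$.

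I expect the bulk of the real work to lie in that last bookkeeping step: verifying carefully that, under the identifications $B(\Z) \otimes_{\Lambda(\Z)} \Lambda(U^{\ab}) = B(U^{\ab})$ and $B(\Z) \otimes_{\Lambda(\Z)} \Lambda(U/[V,V]) = B(U/[V,V])$, the base changes of the structure maps coincide with the maps used to impose the conditions \eqref{f:A1}--\eqref{f:A3} in the $B$-setting, so that the base-changed kernels and images really match the three conditions defining $\Psi_B$; the finiteness of $S(\G,\Z)$ and of all the generating sets is what makes the products finite and the modules finitely generated, which is exactly what allows flat base change to commute with these constructions. Everything else is formal. (Alternatively, as the text suggests, one could replay the proofs of Lemmas \ref{conj}, \ref{inverse}, \ref{image-delta} and \ref{injection} verbatim with $B(\Z)$-coefficients in place of $\cO$- and $\Lambda(\Z)$-coefficients, but the base-change argument is considerably shorter.)
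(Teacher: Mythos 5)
Your proposal is correct and follows precisely the second route the paper indicates in the one-line justification preceding the theorem (``... or by simply using that, according to Thm.\ \ref{skew-Laurent}.iii, $B(\Z)$ is flat over $\Lambda(\Z)$''); you have simply spelled out the flat base-change bookkeeping that the paper leaves implicit. The two points worth highlighting are exactly the ones you flag: that $S(\G,\Z)$ is finite and $\Z$ is central, so that each of the conditions \eqref{f:A1}--\eqref{f:A3} is cut out by a $\Lambda(\Z)$-linear map into a finite product of finitely generated $\Lambda(\Z)$-modules (in particular $g \cdot (-) \cdot g^{-1}$ and $\sigma^{N(U)}_U$ are $\Lambda(\Z)$-linear because $\Z \subseteq \G_0 \cap U$); and that the identification $B(\G_0)\otimes_{\Lambda(\G_0)}\cO[[\Conj(\G)]] = B(\Z)\otimes_{\Lambda(\Z)}\cO[[\Conj(\G)]]$ from Prop.\ \ref{finitefree} turns Lemma \ref{B-comm}.ii into the statement that $\beta_B = \id_{B(\Z)}\otimes\beta$. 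With those in hand, flatness of $B(\Z)$ over $\Lambda(\Z)$ commutes with kernels, images, and finite intersections of submodules of a fixed module, giving $B(\Z)\otimes_{\Lambda(\Z)}\Psi = \Psi_B$ inside $\prod_U B(U^{\ab})$ and hence the claim.
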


\section{The multiplicative theory, part 1}\label{sec:multiplicative}

We suppose $\G$ to satisfy \eqref{f:H1} and \eqref{f:H2}, and we continue to fix an open central subgroup $\Z \subseteq \G$. For any open subgroups $U \subseteq V \subseteq \G$ we have the norm map
\begin{equation*}
    N^V_U : K_1(\Lambda(V)) \longrightarrow K_1(\Lambda(U)) \ .
\end{equation*}
We recall that it is induced by the exact functor which sends a finitely generated projective $\Lambda(V)$-module $P$ to $P$ viewed as a finitely generated projective $\Lambda(U)$-module.
We introduce the composed homomorphisms
\begin{equation*}
    \theta_U : K_1(\Lambda(\G)) \xrightarrow{\; N^\G_U \;} K_1(\Lambda(U)) \longrightarrow K_1(\Lambda(U^{\ab})) = \Lambda(U^{\ab})^\times
\end{equation*}
where the right hand arrow is induced by the canonical surjection $U \twoheadrightarrow U^{\ab}$. If $U$ is abelian then $\theta_U = N^\G_U$, of course. The central object of the multiplicative theory is the homomorphism
\begin{align*}
    \theta := \theta^\G_\Z : K_1(\Lambda(\G)) & \longrightarrow \prod_{U \in S(\G,\Z)} \Lambda(U^{\ab})^\times \\
    x & \longmapsto (\theta_U(x))_U \ .
\end{align*}
Similarly as in the previous section we begin by exhibiting four conditions \eqref{f:M1} -- \eqref{f:M4} which any element $(x_U)_U = \theta(x)$ satisfies.\\
1) Let $\Z \subseteq U \subseteq V \subseteq \G$ be open subgroups such that $[V,V] \subseteq U$. As in the last section we use the ring homomorphism
\begin{equation*}
    \pi^V_U : \Lambda(U^{\ab}) \longrightarrow \Lambda(U/[V,V]) \ .
\end{equation*}
Furthermore, corresponding to the inclusion of groups $U/[V,V] \hookrightarrow V^{\ab}$ we have the norm map
\begin{equation*}
    \nu^V_U := N^{V^{\ab}}_{U/[V,V]} : \Lambda(V^{\ab})^\times \longrightarrow \Lambda(U/[V,V])^\times \ .
\end{equation*}
We claim that the diagram
\begin{equation*}
    \xymatrix{
      K_1(\Lambda(\G)) \ar[d]_{\theta_U} \ar[r]^-{\theta_V} & \Lambda(V^{\ab})^\times \ar[d]^{\nu^V_U} \\
      \Lambda(U^{\ab})^\times \ar[r]^-{\pi^V_U} & \Lambda(U/[V,V])^\times   }
\end{equation*}
is commutative, which implies that
\begin{equation}\tag{M1}\label{f:M1}
    \nu^V_U (x_V) = \pi^V_U (x_U) \qquad\textrm{for any $U \subseteq V$ in $S(\G,\Z)$ with $[V,V] \subseteq U$}
\end{equation}
holds true. We enlarge the above diagram to
\begin{equation*}
    \xymatrix{
      K_1(\Lambda(\G)) \ar[d]_{N^\G_U} \ar[drr]^{N^\G_V}  &&  \\
      K_1(\Lambda(U)) \ar[d]  && K_1(\Lambda(V)) \ar[ll]^-{N^V_U} \ar[dd] \\
      K_1(\Lambda(U^{\ab})) \ar[d] &&  \\
      K_1(\Lambda(U/[V,V])) && \ar[ll]^-{N^{V^{\ab}}_{U/[V,V]}}  K_1(\Lambda(V^{\ab}))   }
\end{equation*}
where the undecorated perpendicular arrows are induced by the obvious ring homomorphisms. The upper triangle is commutative by the transitivity of norm maps. The lower square is commutative because of the identity
\begin{equation*}
    \Lambda(U/[V,V]) \otimes_{\Lambda(U)} \Lambda(V) = \Lambda(V^{\ab}) \ .
\end{equation*}
We point out that if $V$ is abelian then \eqref{f:M1} simplifies to the condition
\begin{equation}\tag{M1a}\label{f:M1a}
    N^V_U(x_V) = x_U \ .
\end{equation}
2) For any open subgroup $U \subseteq \G$ and any $g \in \G$ the diagram
\begin{equation*}
    \xymatrix{
                & K_1(\Lambda(\G)) \ar[dl]_{\theta_U}  \ar[dr]^{\theta_{gUg^{-1}}}             \\
 \Lambda(U^{\ab})^\times  \ar[rr]^{g.g^{-1}} & &    \Lambda((gUg^{-1})^{\ab})^\times        }
\end{equation*}
is commutative. This implies
\begin{equation}\tag{M2}\label{f:M2}
    x_{gUg^{-1}} = gx_U g^{-1} \qquad\textrm{for any $U \in S(\G,\Z)$ and $g \in \G$}.
\end{equation}
3) For the next condition we need the additional assumptions that
\begin{equation}\tag{H4}\label{f:H4}
    \mathcal{O} \ \textrm{ is absolutely unramified and $p \neq 2$}.
\end{equation}
Let $\phi : \mathcal{O} \longrightarrow \mathcal{O}$ denote the Frobenius automorphism.

We first recall the construction of the Verlagerung. Let $U \subseteq V \subseteq \G$ be any two open subgroups, and let $R \subseteq V$ be a set of representatives of the left cosets in $V/U$. Then $R$ also is a basis of $\Lambda(V)$ as a right $\Lambda(U)$-module. For $g \in V$ and $h \in R$ we write
\begin{equation*}
    gh = h_g c_{g,h} \qquad \textrm{with $h_g \in R$ and $c_{g,h} \in U$}.
\end{equation*}
The matrix $M_g$ of left multiplication by $g$ on $\Lambda(V)$ with respect to the basis $R$ is the product of the permutation matrix describing the permutation action of $g$ on the coset space $V/U$ and the diagonal matrix with diagonal entries $\{c_{g,h}\}_{h \in R}$. The map
\begin{align*}\tag{ver}\label{f:ver}
    \ver^V_U : \qquad V^{\ab} & \longrightarrow U^{\ab} \\
    g[V,V] & \longmapsto \prod_{h \in R} c_{g,h} [U,U]
\end{align*}
is a well defined group homomorphism called the transfer map or Verlagerung (cf.\ \cite{Hup} IV.1.4). In the trivial case where $U$ is central in $V$ we have $\ver^V_U(g) = g^{[V:U]}$ for any $g \in V$.

In our case where $V$ is a pro-$p$ group with $p \neq 2$ we have:
\begin{itemize}
  \item[--] $N^V_U(g)$ can be computed as the determinant of $M_g$ in $K_1(\Lambda(U)) = \Lambda(U)^\times/[\Lambda(U)^\times, \Lambda(U)^\times]$.
  \item[--] The determinant of the permutation matrix factor of $M_g$ is equal to one.
\end{itemize}
Hence
\begin{equation}\label{f:N-ver}
    N^V_U(g) \equiv \ver^V_U(g) \bmod [\Lambda(U)^\times, \Lambda(U)^\times] \ .
\end{equation}
In other words we have the commutative diagram
\begin{equation*}
    \xymatrix{
      V^{\ab} \ar[d]_{\ver^V_U} \ar@{^{(}->}[r] & K_1(\Lambda(V)) \ar[d]^{N^V_U} \\
      U^{\ab} \ar@{^{(}->}[r] & K_1(\Lambda(U)).   }
\end{equation*}

In the following we extend $\ver^V_U$ to the unique ring homomorphism
\begin{equation*}
    \ver^V_U : \Lambda(V^{\ab}) \longrightarrow \Lambda(U^{\ab})
\end{equation*}
such that $\ver^V_U | \cO = \phi$.

We also need to introduce, for any open subgroup $U \subseteq
\G$, the unique $\mathbb{Z}_p$-linear continuous map
\begin{equation*}
    \varphi_U : \mathcal{O}[[\Conj(U)]] \longrightarrow \mathcal{O}[[\Conj(U)]]
\end{equation*}
such that
\begin{equation*}
    \varphi_U | \mathcal{O} = \phi \quad \textrm{and} \quad \varphi_U([g]_U) = [g^p]_U \ \textrm{for any $g \in U$}.
\end{equation*}
If $U$ is abelian then $\varphi_U$ is a ring endomorphism of $\Lambda(U)$ with the property that modulo $p$ it coincides with the map $f \longmapsto f^p$.

We now assert:
\begin{equation}\tag{M3}\label{f:M3}
    \ver^V_U(x_V)- x_U \in  \im(\sigma_U^V)\qquad \parbox[t]{6cm}{ for all $U\subseteq V$ in
    $S(\G,\Z)$ such that $[V:U]=p$.}
\end{equation}
Note that in this condition $U$ automatically is normal in $V$. One easily checks that $\ver^U_\Z(\im (\sigma^V_U)) \subseteq p \Lambda(\Z)$. Hence \eqref{f:M3} inductively implies that
$\ver^U_\Z(x_U) = x_\Z \bmod p\Lambda(\Z)$ for any $U \in S(\G,\Z)$. But $\ver^U_\Z$ is just the $|U/\Z|$-power map on the group elements. We therefore deduce:
\begin{equation}\tag{M3a}\label{f:M3a}
    x_U^{|U/\Z|}\equiv x_\Z \bmod p\Lambda(\Z)  \qquad\textrm{for all $U$ in $S(\G,\Z)$}.
\end{equation}

For the proof of \eqref{f:M3} consider as before the diagram
\begin{equation*}
    \xymatrix{
      K_1(\Lambda(\G)) \ar[d]_{N^\G_U} \ar[drr]^{N^\G_V}  &&  \\
      K_1(\Lambda(U)) \ar[d]^{\pr^{U}_{U^{\ab}}}  && K_1(\Lambda(V)) \ar[ll]^-{N^V_U} \ar[d]^{\pr^{V}_{V^{\ab}}} \\
      K_1(\Lambda(U^{\ab}))   && K_1(\Lambda(V^{\ab}))            },
\end{equation*}
from which we see that it suffices to show for  all $x\in\Lambda(V)^\times$ the relation
\begin{equation*}
 \ver^V_U(\pr^{V}_{V^{\ab}}\bar{x} )- \pr^{U}_{U^{\ab}}N^V_U\bar{x} \in  \im(\sigma_U^V)
\end{equation*}
holds, in which $\bar{x}$ denotes the image of $x$ in $K_1(\Lambda(V))$. We choose any $g\in
V\setminus U$ and use the decomposition
\begin{equation}\label{f:decomp}
\Lambda(V)\cong \bigoplus_{i=0}^{p-1} \Lambda(U)g^i
\end{equation}
to write $x=\sum x_k g^k$. In order to calculate $N^V_U\bar{x}$ we denote by $\sigma$ the
endomorphism of $\Lambda(U)$ induced by $u\mapsto gug^{-1}$ and consider the matrix
\begin{equation*}
    \left(
          \begin{array}{llll}
            x_0 & x_1  & \ldots & x_{p-1} \\
            \sigma(x_{p-1})g^p & \sigma(x_0) & \ldots & \sigma(x_{p-2}) \\
            \sigma^2(x_{p-2})g^p & \sigma^2(x_{p-1})g^p & \ldots & \sigma^2(x_{p-3}) \\
            \vdots & \vdots & \ddots & \vdots \\
            \sigma^{p-1}(x_1)g^p & \sigma^{p-1}(x_2)g^p & \ldots & \sigma^{p-1}(x_0) \\
          \end{array}
        \right)
\end{equation*}
describing the $\Lambda(U)$-linear map which arises by multiplication with $x$ from the right using
the basis given in  \eqref{f:decomp} and which thus represents $N^V_U\bar{x}$ in $K_1(\Lambda(U))$.
The image in $\Lambda(U^{\ab})^\times$ therefore is represented by its  determinant. Using the Leibniz rule we obtain
\begin{equation*}
\pr^{U}_{U^{\ab}}N^V_U\bar{x} =
\sum_{\delta\in S}
\mathrm{sign}(\delta)g^{pe_\delta}
\prod_{c \in C} \sigma^c\pr^{U}_{U^{\ab}}x_{\kappa(\delta(c)-c)} \ ;
\end{equation*}
here $S=S(\mathbb{Z}/p\mathbb{Z})$ denotes the symmetric group on (the group) $C:=\mathbb{Z}/p\mathbb{Z}$, $\kappa(c) \in \{0,1,\ldots,p-1\}$ is the respective representative of $c$, and $e_\delta$ is the number of $c \in C$ such that $\kappa(\delta(c))<\kappa(c)$. Note that, since $g^p \in U$, the endomorphism $\sigma^p$ induces the identity on $\Lambda(U^{\ab})$.

The above sum can be decomposed with respect to the following action of $C$ on $S$. Let $\gamma \in S$ denote the cycle of order $p$  defined by $\gamma(c) := c + 1$. We put
\begin{align*}
    C \times S & \longrightarrow S \\
    (c,\delta) & \longmapsto \delta^c := \gamma^c \delta \gamma^{-c} \ .
\end{align*}
Explicitly we have
\begin{equation*}
    \delta^c(c'):=\delta(c'-c)+c \qquad\text{for all $c,c'\in C$}.
\end{equation*}
This action obviously respects the signum of elements in $S$. Moreover, the function $e_\delta$ is constant on each $C$-orbit in $S$. We see that for every $\delta_0 \not\in S^C$ the partial sum
\begin{multline*}
\sum_{\delta\in C\delta_0}
\mathrm{sign}(\delta)g^{pe_\delta}
\prod_{c \in C} \sigma^c\pr^{U}_{U^{\ab}}
x_{\kappa(\delta(c)-c)} \\
= \mathrm{sign}(\delta_0)g^{pe_{\delta_0}}
\sum_{a \in C}\prod_{c \in C} \sigma^c \pr^{U}_{U^{\ab}}x_{\kappa(\delta_0^a(c)-c)} \\
 = \sum_{a \in C} \sigma^a \left(\mathrm{sign}(\delta_0)g^{pe_{\delta_0}}
\prod_{b \in C} \sigma^b \pr^{U}_{U^{\ab}}x_{\kappa(\delta_0(b)-b)}\right)
\end{multline*}
belongs to $\im(\sigma^V_U)$ (the second identity comes from substituting $b$ for $c-a$). On the other hand, if $\delta\in S^C$, one checks immediately that $\delta(c)-c=\delta(0)$ is constant for all $c\in C$, i.e., $\delta = \gamma^{\delta(0)} = \gamma^{\kappa(\delta(0))}$, whence
$\mathrm{sign}(\delta)=1$ (since $p \neq 2$).
Furthermore $e_\delta=\kappa(\delta(0))$. As $S$ decomposes
disjointly  into $S^C$ and the orbits of order $p$, we altogether obtain modulo $\im(\sigma^V_U)$
\begin{align}\label{f:Leibniz}
\pr^{U}_{U^{\ab}}N^V_U\bar{x} & \equiv \sum_{\delta\in S^C}
\mathrm{sign}(\delta)g^{pe_\delta}\prod_{c \in C}^{p-1}
\sigma^c \pr^{U}_{U^{\ab}}x_{\kappa(\delta(c)-c)} \\
& = \sum_{k=0}^{p-1}g^{pk}
\prod_{i=0}^{p-1}\sigma^i\pr^{U}_{U^{\ab}}x_k \ . \nonumber
\end{align}
Using  the fact that $\ver^V_U$ is a ring homomorphism we are now able to determine modulo $\im(\sigma^V_U)$ :
\begin{align*}
\ver^V_U(\pr^{V}_{V^{\ab}}\bar{x} ) & = \sum_{k=0}^{p-1}\ver^V_U(\pr^{V}_{V^{\ab}}(x_k))
\ver^V_U(\pr^{V}_{V^{\ab}}(g)^k) \\
& \equiv \sum_{k=0}^{p-1} \prod_{i=0}^{p-1}\sigma^i(\pr^U_{U^{\ab}}(x_k))
\pr^{U}_{U^{\ab}}(g^p)^k \\
& \equiv \pr^{U}_{U^{\ab}}N^V_U\bar{x} \ .
\end{align*}
Using the set of representatives $R = \{1, g, \ldots, g^{p-1}\}$ one checks from the definition that $\ver^V_U(\pr^{V}_{V^{\ab}}(g)) = \pr^{U}_{U^{\ab}}(g^p)$. The last congruence is \eqref{f:Leibniz} while the middle one can be seen as follows. First recall that $\im(\sigma^V_U)$ is a $\Lambda(U^{\ab})^{V/U}$-ideal and that $\pr^{U}_{U^{\ab}}(g^p)$ belongs to
$\Lambda(U^{\ab})^{V/U}$. Fix any $k$ and write $x_k=\sum_{h\in U/\Z} a_h h$ with $a_h\in\Lambda(\Z)$. Using again the fact that $\ver^V_U$ is a ring homomorphism combined with formula \eqref{f:ver}
(with $c_{u,g^i}=\sigma^{-i}(u)$) we obtain
\begin{align*}
\ver^V_U(\pr^{V}_{V^{\ab}}(x_k)) & = \pr^U_{U^{\ab}}\left(\sum_{h\in U/\Z}
\varphi_\Z(a_h)\prod_{i=0}^{p-1}\sigma^i(h) \right) \\
& \equiv \pr^U_{U^{\ab}}\left(\sum_{h\in U/\Z} a_h^p\prod_{i=0}^{p-1}\sigma^i(h) \right)
\end{align*}
as $\varphi_\Z(a_h)-a_h^p\in p\Lambda(\Z)$, so that its projection to $\Lambda(U^{\ab})$ lies in $\im(\sigma^V_U)$, and $\prod_{i=0}^{p-1}\sigma^i(h)$ belongs to $\Lambda(U)^{V/U}$ for every $ h\in V/U$. Since moreover
\begin{equation*}
    \prod_{i=0}^{p-1}\left(\sum_{h\in U/\Z} a_h\sigma^i(h)\right)-\sum_{h\in U/\Z}a_h^p\prod_{i=0}^{p-1}\sigma^i(h)
\end{equation*}
is a sum of (mixed) terms of the form
\begin{equation*}
    \sum_{i=0}^{p-1} \sigma^i\left(\prod_{j=0}^{p-1} (a_{h_j}\sigma^j(h_j)) \right)
\end{equation*}
with $h_j\in U/\Z$, such that $\{h_j : 0\leq j\leq p-1\}$ consists of at least two elements, and hence belongs to $\im(\sigma^V_U)$, we can continue our above congruences by
\begin{align*}
\phantom{\ver^V_U(\pr^{V}_{V^{\ab}}\bar{x} )} & \equiv \pr^U_{U^{\ab}}\prod_{i=0}^{p-1}\left(\sum_{h\in
U/\Z} a_h\sigma^i(h) \right) \\
& = \prod_{i=0}^{p-1}\sigma^i(\pr^U_{U^{\ab}}(x_k))
\end{align*}
as desired.

\noindent
 4) The last condition requires further preparations. Let $U \in C(\G,\Z)$. If $U = \Z$ we put
\begin{align*}
    \alpha_\Z : \Lambda(\Z)^\times & \longrightarrow 1 + p\Lambda(\Z) \subseteq \Lambda(\Z)^\times \\
    f & \longmapsto \frac{f^p}{\varphi_\Z(f)} \ .
\end{align*}
If $U \neq \Z$ then we let $U' \in C(\G,\Z)$ be the unique subgroup of $U$ such that $[U:U']=p$. Since $U$ is abelian we may consider the composed homomorphism
\begin{equation*}
    \Lambda(U)^\times \xrightarrow{\; N^U_{U'} \;} \Lambda(U')^\times \xrightarrow{\; \subseteq \;} \Lambda(U)^\times \ .
\end{equation*}
It is shown in \cite{SV2} Prop.\ 2.3 (under more general circumstances) that modulo $p$ this map coincides with the map $f \longmapsto f^p$. It follows that
\begin{align*}
    \alpha_U : \Lambda(U)^\times & \longrightarrow 1 + p\Lambda(U) \subseteq \Lambda(U)^\times \\
    f & \longmapsto \frac{f^p}{N^U_{U'}(f)}
\end{align*}
is a well defined homomorphism.

We also need, for any open subgroup $U \subseteq \G$, the ring
\begin{equation*}
    \Lambda^\infty(U) := \varprojlim K[U/N]
\end{equation*}
where $N$ runs over all open normal subgroups of $U$ and where $K$ denotes the field of fractions of $\mathcal{O}$. We obviously have $\Lambda(U) \subseteq \Lambda^\infty(U)$. The usual logarithm series induces a homomorphism
\begin{multline*}
    \log : K_1(\Lambda(U)) = \Lambda(U)^\times / [\Lambda(U)^\times, \Lambda(U)^\times] \longrightarrow \\
     \Lambda^\infty(U)^{\ab} := \Lambda^\infty(U) / \overline{[\Lambda^\infty(U), \Lambda^\infty(U)]} \ .
\end{multline*}
Our maps $\tr^V_U$, $\eta_U$, and $\varphi_U$ extend in an obvious way to maps, denoted by the same symbols, between the $\Lambda^\infty(U)^{\ab}$. As a consequence of the first step in the proof of \cite{OT} Thm.\ 1.4 we have, for any open subgroups $U \subseteq V \subseteq \G$, the commutative diagram
\begin{equation}\label{d:trlogN}
    \xymatrix{
      K_1(\Lambda(V)) \ar[d]_{N^V_U} \ar[r]^{\log} & \Lambda^\infty(V)^{\ab} \ar[d]^{\tr^V_U} \\
      K_1(\Lambda(U)) \ar[r]^{\log} & \Lambda^\infty(U)^{\ab} .  }
\end{equation}

\begin{lemma}\label{log}
For any $U \neq \Z$ in $C(\G,\Z)$ the diagram
\begin{equation*}
    \xymatrix{
       \Lambda(U)^\times \ar[d]_{\alpha_U} \ar[r]^{\log} & \Lambda^\infty(U) \ar[d]^{p \eta_U} \\
       \Lambda(U)^\times \ar[r]^{\log} & \Lambda^\infty(U)   }
\end{equation*}
is commutative.
\end{lemma}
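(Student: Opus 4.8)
The plan is to exploit that, since $U\in C(\G,\Z)$, the group $U$ is abelian, so that $\log$ is an ordinary group homomorphism from $\Lambda(U)^\times$ into the additive group of the commutative ring $\Lambda^\infty(U)$, and $\eta_U$ is $\Lambda(\Z)$-linear. Applying $\log$ to $\alpha_U(f)=f^p/N^U_{U'}(f)$ then turns the quotient into a difference:
\[ \log\alpha_U(f)=p\log f-\log N^U_{U'}(f). \]

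Next I would invoke the commutative square \eqref{d:trlogN} with the pair $(V,U)$ there specialized to $(U,U')$: since $U$ and $U'$ are abelian this says precisely $\log N^U_{U'}(f)=\tr^U_{U'}(\log f)$, an identity in $\Lambda^\infty(U')$ which I view inside $\Lambda^\infty(U)$ via the inclusion $\Lambda^\infty(U')\hookrightarrow\Lambda^\infty(U)$ --- the very inclusion already used in defining $\alpha_U$. Combining the two displays, $\log\alpha_U(f)=p\log f-\tr^U_{U'}(\log f)$, so the lemma reduces to the single endomorphism identity
\[ \tr^U_{U'}=p(\id-\eta_U)\quad\text{on }\Lambda^\infty(U). \]

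To establish this I would work on the $\Lambda(\Z)$-basis $U/\Z$ of $\Lambda(U)=\bigoplus_{g\in U/\Z}\Lambda(\Z)g$ and extend by $\Lambda(\Z)$-linearity and continuity; since all the maps in sight are the obvious extensions of their counterparts on $\Lambda(U)$, it suffices to treat $\Lambda(U)$. Now $U/\Z$ is cyclic of $p$-power order $\ge p$ and $U'/\Z$ is its unique subgroup of index $p$; a coset $g\Z$ fails to generate $U/\Z$ exactly when $g\in U'$. Hence $\id-\eta_U$ is the $\Lambda(\Z)$-linear projection of $\bigoplus_{g\in U/\Z}\Lambda(\Z)g$ onto $\bigoplus_{g\in U'/\Z}\Lambda(\Z)g=\Lambda(U')$, whereas Remark \ref{trace}.ii gives $\tr^U_{U'}(g)=[U:U']\,g=pg$ for $g\in U'$ and $\tr^U_{U'}(g)=0$ for $g\notin U'$; so indeed $\tr^U_{U'}=p(\id-\eta_U)$. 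Feeding this back in yields $\log\alpha_U(f)=p\log f-p(\id-\eta_U)(\log f)=p\,\eta_U(\log f)$, which is exactly the commutativity claimed.

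The computation itself is short; the only points needing care are formal ones --- checking that the three occurrences of $\log$ refer to mutually compatible maps (they do, all being the canonical logarithm on the relevant $K_1$'s), and that the passage from $\Lambda(U)$ to $\Lambda^\infty(U)$ for $\tr^U_{U'}$ and $\eta_U$ is the one compatible with \eqref{d:trlogN}. I do not expect a genuine obstacle here: the substance is already packaged into the commutative diagram \eqref{d:trlogN} (from \cite{OT}) and into Remark \ref{trace}.ii.
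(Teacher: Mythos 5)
Your proof is correct and follows essentially the same route as the paper: apply $\log$ to turn $\alpha_U$ into a difference, replace $\log N^U_{U'}$ by $\tr^U_{U'}\circ\log$ via the Oliver--Taylor diagram \eqref{d:trlogN}, and then check the resulting endomorphism identity $\tr^U_{U'}=p(\id-\eta_U)$ on the basis $U/\Z$ using Remark~\ref{trace}.ii and the definition of $\eta_U$. No substantive differences from the paper's argument.
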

\begin{proof}
Let $U' \in C(\G,\Z)$ be the unique subgroup of $U$ of index $p$. Then
\begin{align*}
    \log (\alpha_U(f)) & = p \log(f) - \log(N^U_{U'}(f)) = p \log(f) - \tr^U_{U'}(\log(f)) \\
    & = (p \id - \tr^U_{U'})(\log(f)) \ .
\end{align*}
Hence we have to establish the identity
\begin{equation*}
    \id - \frac{1}{p} \tr^U_{U'} = \eta_U \ .
\end{equation*}
But, by definition, we have
\begin{equation*}
    \eta_U(g) =
    \begin{cases}
    g & \textrm{if $g \in U \setminus U'$}, \\
    0 & \textrm{if $g \in U'$}.
    \end{cases}
\end{equation*}
Remark \ref{trace}.ii says that the same computation holds for the left hand side.
\end{proof}

For $V \in S(\G,\Z)$ let $P(V)$ denote the set of all
$W\neq \Z$ in $C(\G,\Z)$ such that the unique subgroup $\Z\subseteq W'\subseteq W$ with $[W:W']=p$ is contained in $V$. We now introduce the map
\begin{equation*}
    \mathcal{L} = (\mathcal{L}_V)_V : [\prod_{U \in S(\G,\Z)} \Lambda(U^{\ab})^\times]_{(M3a)}  \longrightarrow \prod_{V \in S(\G,\Z)} \Lambda(V^{\ab}) \otimes_{\mathbb{Z}_p} \mathbb{Q}_p
\end{equation*}
defined by
\begin{equation*}
    \mathcal{L}_V((y_U)_U) := \frac{1}{p^2|V/\Z|}\log \big(
\frac{y_V^{p^2|V/\Z|}}{\varphi_\Z(y_\Z^p)\prod_{W\in P(V)} \varphi_W(\alpha_W(y_W))^{|W/\Z|}} \big)
\end{equation*}
where $[ \ldots ]_{(M3a)}$ indicates the subgroup of all those elements which satisfy \eqref{f:M3a}. The individual factors $\varphi_W(\alpha_W(y_W))$ appearing in the above definition lie in $1 + p\Lambda(W')$, since $\alpha_W(y_W) \in 1+p\Lambda(W)$, and hence can be viewed in $1+p\Lambda(V^{\ab})$. Moreover using \eqref{f:M3a} we have
\begin{align*}
\varphi_\Z(y_\Z^p)\prod_{W\in P(V)} \varphi_W(\alpha_W(y_W))^{|W/\Z|} & \equiv \varphi_\Z(y_\Z^p) \\
   & \equiv  y_\Z^{p^2} \\
   &\equiv  y_V^{p^2|V/\Z|} \mod p\Lambda(V^{\ab}) \ .
\end{align*}
Thus the logarithms in the asserted map are defined and lie in $p\Lambda(V^{\ab})$.

\begin{lemma}\label{M-A}
If $(y_U)_U \in [\prod_{U \in S(\G,\Z)}  \Lambda(U^{\ab})^\times]_{(M3a)}$ satisfies the condition \eqref{f:M1}, resp.\ \eqref{f:M2}, then $\mathcal{L}((y_U)_U)$ satisfies \eqref{f:A1}, resp.\ \eqref{f:A2}.
\end{lemma}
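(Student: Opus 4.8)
The plan is to reduce each claim to the corresponding additive statement already established in Section \ref{sec:additive}, using the compatibility diagram \eqref{d:trlogN} for the logarithm together with the explicit definition of $\mathcal{L}_V$. Write $b_V := p^2|V/\Z| \cdot \mathcal{L}_V((y_U)_U) = \log(z_V)$ where $z_V := y_V^{p^2|V/\Z|} / \big( \varphi_\Z(y_\Z^p) \prod_{W \in P(V)} \varphi_W(\alpha_W(y_W))^{|W/\Z|} \big)$ is the argument of the logarithm; we must show that $(\mathcal{L}_V)_V$ satisfies \eqref{f:A1} under \eqref{f:M1}, and \eqref{f:A2} under \eqref{f:M2}.

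\emph{Verification of \eqref{f:A2}.} Fix $U \in S(\G,\Z)$ and $g \in \G$; set $V := gUg^{-1}$. Conjugation by $g$ is a ring isomorphism $\Lambda(U^{\ab}) \to \Lambda(V^{\ab})$ compatible with all the operators in sight: it intertwines $\varphi_U$ with $\varphi_V$, $\alpha_U$ with $\alpha_V$ (since $N^U_{U'}$ is transported to $N^V_{V'}$ with $V' = gU'g^{-1}$), and it fixes $\Z$ pointwise because $\Z$ is central, so $\varphi_\Z(y_\Z^p)$ is untouched. Moreover $P(V) = \{ gWg^{-1} : W \in P(U)\}$ and $|W/\Z| = |gWg^{-1}/\Z|$. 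Hence assuming $x_{gUg^{-1}} = g x_U g^{-1}$ for the data $(y_U)_U$ (this is \eqref{f:M2}), we get $z_{gUg^{-1}} = g z_U g^{-1}$, and since $\log$ commutes with conjugation, $b_{gUg^{-1}} = g b_U g^{-1}$, which after dividing by $p^2|U/\Z| = p^2|gUg^{-1}/\Z|$ is exactly \eqref{f:A2} for $\mathcal{L}((y_U)_U)$.

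\emph{Verification of \eqref{f:A1}.} This is the main obstacle. Take $U \subseteq V$ in $S(\G,\Z)$ with $[V,V] \subseteq U$, so $U \trianglelefteq V$; we must show $\tau^V_U(\mathcal{L}_V) = \pi^V_U(\mathcal{L}_U)$. The idea is to apply $\tau^V_U = \tr^{V^{\ab}}_{U/[V,V]}$ to $b_V = \log(z_V)$ and use \eqref{d:trlogN} (in the form where $\tr$ corresponds to the norm $N^{V^{\ab}}_{U/[V,V]} = \nu^V_U$) to convert it into $\log(\nu^V_U(z_V))$, then expand $\nu^V_U(z_V)$ factor by factor. The term $y_V^{p^2|V/\Z|}$ contributes $\nu^V_U(y_V)^{p^2|V/\Z|}$, which by \eqref{f:M1} equals $\pi^V_U(y_U)^{p^2|V/\Z|} = \pi^V_U(y_U^{p^2 [V:U] |U/\Z|})$. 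For the denominator factors one must check two things: first, that $\nu^V_U$ commutes appropriately with $\varphi$ and with $\alpha_W$ — this uses transitivity of norm maps and the fact that $\varphi_U$ restricted to $\cO$ is $\phi$, so that $\nu^V_U \circ \varphi_V$ and $\varphi_{U/[V,V]} \circ \nu^V_U$ agree on the relevant subrings; second, and this is the combinatorially delicate point, that the index sets match up: $P(V)$ decomposes according to which $W' \subseteq U$ (these already appear in $P(U)$) versus $W' \subseteq V$, $W' \not\subseteq U$, and the norm $\nu^V_U$ applied to the $W$-factor with $W' \not\subseteq U$ must be absorbed either into the $\Z$-factor or produce the missing comparison with $\alpha_U$ itself. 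I expect that, after taking $\log$, this reorganization becomes a linear identity among the additive operators $\tr$, $\eta$, $\varphi$ which is precisely Lemma \ref{log} (the identity $\id - \tfrac1p \tr^U_{U'} = \eta_U$) combined with Remark \ref{eta-A3}.ii.a; so the strategy is to pass everything through $\log$ at the outset, reduce to an identity in $\Lambda^\infty(V^{\ab})$, and then verify that identity using the already-proved additive lemmas rather than manipulating units directly. Dividing the resulting equality by $p^2[V:U]|U/\Z|$ and matching with $\pi^V_U(\mathcal{L}_U)$ — noting $\pi^V_U$ is a ring map compatible with $\log$ — yields \eqref{f:A1}.
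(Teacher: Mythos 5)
Your overall plan is the right one and matches the paper's proof in outline: apply $\tau^V_U = \tr^{V^{\ab}}_{U/[V,V]}$ to $\mathcal L_V$, use the trace/norm compatibility~\eqref{d:trlogN} to convert $\tau^V_U\circ\log$ into $\log\circ\nu^V_U$, feed in~\eqref{f:M1} on the $y_V$-factor, and then compare with $\pi^V_U\mathcal L_U$ after pushing the $P(V)$-product through Lemma~\ref{log}. Your treatment of~\eqref{f:A2} is also fine and is exactly what the paper means by ``straightforward''.

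However, there is a genuine gap in the~\eqref{f:A1} part, precisely at what you call the ``combinatorially delicate point''. You conjecture that the terms indexed by $W\in P(V)$ with $W'\not\subseteq U$ get ``absorbed into the $\Z$-factor or produce the missing comparison with $\alpha_U$'', and that the reorganization will follow from Lemma~\ref{log} together with Remark~\ref{eta-A3}.ii.a. Neither of these is what actually happens. The key technical fact, which your proposal never establishes, is the identity
\begin{equation*}
    \tau^V_U\,\varphi_W\,\eta_W \;=\;
    \begin{cases}
    [V:U]\,\pi^V_U\,\varphi_W\,\eta_W & \text{if $W'\subseteq U$,}\\[2pt]
    0 & \text{if $W'\not\subseteq U$,}
    \end{cases}
\end{equation*}
for $W\in P(V)$. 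Its proof uses Remark~\ref{trace}.iii, not Remark~\ref{eta-A3}.ii.a: $\eta_W$ projects onto generators $g$ of $W/\Z$, so $\varphi_W\eta_W$ lands on elements $g^p$ that generate $W'/\Z$; if $W'\not\subseteq U$ then no such $g^p$ lies in $U$, hence the trace $\tau^V_U$ annihilates the term outright. So the surplus factors from $P(V)\setminus P(U)$ simply \emph{vanish}; they are not absorbed elsewhere and do not reconstitute $\alpha_U$. Without this vanishing statement, the index sets on the two sides of~\eqref{f:A1} do not match and your argument cannot close. You should also make explicit the computation $\nu^V_U(\varphi_\Z(y_\Z^p)) = \varphi_\Z(y_\Z^p)^{[V:U]}$, which together with $|V/\Z| = [V:U]\,|U/\Z|$ is what makes the first ``piece'' of $\log z_V$ come out to $\pi^V_U$ of the corresponding piece of $\log z_U$; at present this is hidden in your phrase ``expand $\nu^V_U(z_V)$ factor by factor''.

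Finally, in your notation you divide by $p^2[V:U]|U/\Z|$ at the end, but the normalizing factor in $\mathcal L_V$ is $\tfrac{1}{p^2|V/\Z|}$; these agree since $|V/\Z| = [V:U]\,|U/\Z|$, but it is worth stating this explicitly so the bookkeeping is transparent.
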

\begin{proof}
It is straightforward to check that \eqref{f:M2} implies \eqref{f:A2}. Let therefore $U\subseteq V$ be any two subgroups in $S(\G,\Z)$ such that $[V,V]\subseteq U$. We note that
\begin{multline*}
 \log(\frac{y_V^{p^2|V/\Z|}}{\varphi_\Z(y_\Z^p)\prod_{W\in P(V)}
\varphi_W(\alpha_W(y_W))^{|W/\Z|}})\\ =\log(\frac{y_V^{p^2|V/\Z|}}{\varphi_\Z(y_\Z^p)})-\log(\prod_{W\in P(V)}
\varphi_W(\alpha_W(y_W))^{|W/\Z|}) \ .
\end{multline*}
This will allow us to split the subsequent calculations into two parts. We begin by computing
\begin{align*}
 \tau^V_U\left(\tfrac{1}{p^2|V/\Z|}\log(
\frac{y_V^{p^2|V/\Z|}}{\varphi_\Z(y_\Z^p) })\right)
 & = \tfrac{1}{p^2|V/\Z|}\log(
\frac{\nu_U^V(y_V)^{p^2|V/\Z|}}{\nu_U^V(\varphi_\Z(y_\Z^p))}) \\
& = \tfrac{1}{p^2|V/\Z|}\log(
\frac{\pi_U^V(y_U)^{p^2|V/\Z|}}{ \varphi_\Z(y_\Z^p)^{|V/U|}}) \\
& = \pi_U^V\left(\tfrac{1}{p^2|U/\Z|}\log(
\frac{y_U^{p^2|U/\Z|}}{ \varphi_\Z(y_\Z^p) })\right)
\end{align*}
where the first, resp.\ second, identity uses \eqref{d:trlogN}, resp.\ \eqref{f:M1}. Next we compute
\begin{align*}
&  \tau^V_U\left(\tfrac{1}{p^2|V/\Z|}\log(\prod_{W\in P(V)}
\varphi_W(\alpha_W(y_W))^{|W/\Z|})\right)\\
& \qquad\qquad =\tfrac{1}{p^2|V/\Z|}\sum_{W\in P(V)}\tau^V_U( \log(\varphi_W(\alpha_W(y_W))^{|W/\Z|}))\\
& \qquad\qquad =\tfrac{1}{p |V/\Z|}\sum_{W\in P(V)} \tau^V_U \varphi_W\eta_W\log(y_W^{|W/\Z|})
\end{align*}
where the last identity uses Lemma \ref{log}. Using the definitions and Remark \ref{trace}.iii it is straightforward to check that
\begin{equation*}
    \tau^V_U \varphi_W\eta_{W}=
    \begin{cases} [V:U]\pi^V_U\varphi_W\eta_{W} & \hbox{if $W'\subseteq U$,} \\
     0, & \hbox{otherwise}
    \end{cases}
\end{equation*}
where, as before, $W'$ denotes the unique subgroup of $W$ of index $p$ and containing $\Z$. We therefore may continue the above computation by
\begin{align*}
    & \qquad\qquad =\tfrac{1}{p|V/\Z|}\sum_{W\in P(U)}  [V:U]\pi^V_U \varphi_W\eta_{W}\log(y_W^{|W/\Z|})\\
& \qquad\qquad =\pi^V_U\left(\tfrac{1}{p^2|U/\Z|}\sum_{W\in P(U)}  \log(\varphi_W(\alpha_W(y_W))^{|W/\Z|})\right)\\
& \qquad\qquad  = \pi^V_U\left(\tfrac{1}{p^2|U/\Z|}\log(\prod_{W\in P(U)}
\varphi_W(\alpha_W(y_W))^{|W/\Z|})\right)
\end{align*}
where the second identity again uses Lemma \ref{log}.
This establishes that
\begin{equation*}
    \tau^V_U(\mathcal{L}_V((y_U)_U))=\pi_U^V \mathcal{L}_U((y_U)_U) \ ,
\end{equation*}
i.\ e., the condition \eqref{f:A1} for $\mathcal{L}_V((y_U)_U)$.
\end{proof}

\begin{lemma}\label{beta}
For any $U \in S(\G,\Z)$ and any $f \in \Lambda^\infty(\G)^{\ab}$ we have:
\begin{equation*}
    \beta_U(\varphi_\G(f)) = \tfrac{1}{[U:\Z]} \varphi_\Z(\beta_\Z(f)) + \sum_{W \in P(U)} \tfrac{[W:\Z]}{[U:\Z]} \varphi_W(\eta_W(\beta_W(f))) \ .
\end{equation*}
\end{lemma}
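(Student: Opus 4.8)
The plan is to reduce to $f = [g]_\G$, $g \in \G$. All maps occurring --- $\beta_U$, $\varphi_\G$, $\varphi_\Z\circ\beta_\Z$, the $\varphi_W\circ\eta_W\circ\beta_W$, and multiplication by the rational scalars --- are continuous and $\phi$-semilinear over $\cO$, and the classes $[g]_\G$ span $\Lambda^\infty(\G)^{\ab}$ topologically over $K$, so it is enough to check the formula on them. (The terms on the right are read in $\Lambda^\infty(U)^{\ab}$ through $\Z \subseteq U$, resp.\ $W' \subseteq U$, followed by $U \twoheadrightarrow U^{\ab}$; the inclusion $W' \subseteq U$ is available because $\eta_W$ lands in the $\Lambda(\Z)$-span of the generators of $W/\Z$, so that $\varphi_W\eta_W$ takes values in $\Lambda(W')$.)

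Since $\varphi_\G([g]_\G) = [g^p]_\G$, the explicit trace formula gives
\[
    \beta_U(\varphi_\G([g]_\G)) = \sum_{\substack{xU \in \G/U \\ x^{-1}g^p x \in U}} \pr^U_{U^{\ab}}(x^{-1}g^p x) = \frac{1}{[U:\Z]} \sum_{\substack{x\Z \in \G/\Z \\ x^{-1}g^p x \in U}} \pr^U_{U^{\ab}}(x^{-1}g^p x) \ ,
\]
the last equality because both the condition $x^{-1}g^p x \in U$ and the summand $\pr^U_{U^{\ab}}(x^{-1}g^p x)$ depend only on $xU$, and even only on $x\Z$ (as $\Z$ is central), so that passing from $\G/U$ to $\G/\Z$ merely repeats each summand $[U:\Z]$ times.

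I would then distinguish two cases. If $g \in \Z$: every conjugate of $g^p$ equals $g^p \in \Z \subseteq U$, so the left hand side is $[\G:U]\,g^p$; on the right, $\beta_\Z([g]_\G) = [\G:\Z]\,g$ yields the first summand $[\G:U]\,g^p$, while for every $W \in P(U)$ one has $\beta_W([g]_\G) = [\G:W]\,g$ with $g$ lying in the component $\Lambda(\Z)\cdot 1$ of $\Lambda(W)$, which $\eta_W$ annihilates since $W \neq \Z$; hence the $W$-contributions vanish and both sides coincide. If $g \notin \Z$: set $V_g := \langle g, \Z \rangle \in C(\G,\Z)$, so $V_g \neq \Z$, and --- using that $\G$ is pro-$p$, so $V_g/\Z$ is cyclic of order a power of $p$ which is $\geq p$ --- its unique index-$p$ subgroup containing $\Z$ is $V_g' = \langle g^p, \Z \rangle$. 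The first summand on the right now vanishes, because $\beta_\Z([g]_\G) = 0$. For a fixed $W \in P(U)$, $\eta_W$ retains exactly the conjugates $x^{-1}gx \in W$ with $\langle x^{-1}gx, \Z \rangle = W$, i.e.\ $x^{-1}V_g x = W$, whence
\[
    \varphi_W(\eta_W(\beta_W([g]_\G))) = \sum_{\substack{xW \in \G/W \\ x^{-1}V_g x = W}} x^{-1}g^p x \ \in \ \Lambda(W') \subseteq \Lambda(U) \ ;
\]
the same bookkeeping as in the second paragraph --- each such coset $xW$ splits into $[W:\Z]$ cosets $x\Z$, with condition and summand stable because $W$ is abelian and $\Z$ central --- converts the prefactor $\tfrac{[W:\Z]}{[U:\Z]}$ into $\tfrac{1}{[U:\Z]}$, giving
\[
    \frac{[W:\Z]}{[U:\Z]}\, \pr^U_{U^{\ab}}\varphi_W(\eta_W(\beta_W([g]_\G))) = \frac{1}{[U:\Z]} \sum_{\substack{x\Z \in \G/\Z \\ x^{-1}V_g x = W}} \pr^U_{U^{\ab}}(x^{-1}g^p x) \ .
\]

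To conclude I would sum this over $W \in P(U)$ and use the equivalence, for $x \in \G$, that $x^{-1}V_g x \in P(U)$ if and only if $x^{-1}g^p x \in U$: the group $x^{-1}V_g x = \langle x^{-1}gx, \Z \rangle$ lies in $C(\G,\Z)$, is $\neq \Z$, and has index-$p$ subgroup $\langle x^{-1}g^p x, \Z \rangle$, which is contained in $U$ precisely when $x^{-1}g^p x \in U$. Hence the right hand side equals $\tfrac{1}{[U:\Z]} \sum_{x\Z,\, x^{-1}g^p x \in U} \pr^U_{U^{\ab}}(x^{-1}g^p x)$, which is the left hand side as computed in the second paragraph. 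I expect the only genuine difficulty to be the index-set bookkeeping: the subgroup $\langle x^{-1}gx, \Z \rangle$ attached to $x$ is not constant along a coset $xU$, which forces one to push all sums down to $\G/\Z$ --- where centrality of $\Z$ makes the summands truly invariant --- while keeping careful track of the indices $[W:\Z]$ and $[U:\Z]$.
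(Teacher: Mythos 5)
Your proof is correct and takes essentially the same route as the paper's: reduce to $f=[g]_\G$, split off the case $g\in\Z$, and for $g\notin\Z$ pass the trace sum from $\G/U$ up to $\G/\Z$ and regroup by the conjugate $W = x^{-1}\langle g,\Z\rangle x \in P(U)$, recognizing $\eta_W\circ\beta_W$ after collapsing back to $\G/W$. The only difference is presentational — you build the right-hand side term by term and match it against the left, where the paper writes a single forward chain of equalities — and you supply the reduction-to-$[g]_\G$ and index-set bookkeeping in more explicit detail.
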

\begin{proof}
It suffices to consider elements of the form $f = [g]_\G$ for some $g \in \G$. First let $g \not\in \Z$. We compute
\begin{align*}
    \beta_U(\varphi_\G([g]_\G)) & = \pr^U_{U^{\ab}}(\tr^\G_U ([g^p]_\G))
    = \pr^U_{U^{\ab}}(\sum_{h \in \G/U, h^{-1}g^ph \in U} [h^{-1}g^ph]_U) \\
    & = \sum_{h \in \G/U, h^{-1}g^ph \in U} h^{-1}g^ph [U,U] \\
    & = \tfrac{1}{[U:\Z]} \sum_{h \in \G/\Z, h^{-1}g^ph \in U} h^{-1}g^ph [U,U] \\
    & = \tfrac{1}{[U:\Z]} \sum_{W \in P(U)} \  \sum_{h \in \G/\Z, W=<h^{-1}gh,\Z>} h^{-1}g^ph [U,U] \\
    & = \tfrac{1}{[U:\Z]} \sum_{W \in P(U)} \varphi_W \big( \sum_{h \in \G/\Z, W=<h^{-1}gh,\Z>} h^{-1}gh \big) \\
    & = \tfrac{1}{[U:\Z]} \sum_{W \in P(U)} \varphi_W \big([W:\Z] \sum_{h \in \G/W, W=<h^{-1}gh,\Z>} h^{-1}gh \big) \\
    & = \tfrac{[W:\Z]}{[U:\Z]} \sum_{W \in P(U)} \varphi_W(\eta_W(\beta_W([g]_\G))) \ .
\end{align*}
We also have $\varphi_\Z(\beta_\Z([g]_\G)) = 0$ in this case. Now suppose that $g \in \Z$. Then the last term in the above computation vanishes. But we have
\begin{equation*}
    \beta_U(\varphi_\G([g]_\G)) = [\G:U] g^p [U,U] = \frac{1}{[U:\Z]} \varphi_\Z(\beta_\Z([g]_\G)) \ .
\end{equation*}
\end{proof}

Oliver and Taylor (cf.\ \cite{Oli} Chap.\ 6 or \cite{CR} \S54) have shown that
\begin{align*}
    L=L_\G : K_1(\Lambda(\G)) & \longrightarrow \mathcal{O}[[\Conj(\G)]] \\
    x & \longmapsto \log(x) - \frac{1}{p} \varphi_\G (\log(x))
\end{align*}
is a well defined homomorphism which makes the diagram
\begin{equation*}
    \xymatrix{
  K_1(\Lambda(\G)) \ar[r]^{\log} \ar[dr]_{L}
                & \Lambda^\infty(\G)^{\ab} \ar[r]^{\id - \frac{1}{p} \varphi_\G} &    \Lambda^\infty(\G)^{\ab}    \\
                & \mathcal{O}[[\Conj(\G)]]  \ar[ur]_{\rm can}               }
\end{equation*}
commutative. We observe that, since $\Lambda^\infty(\G)^{\ab}$ also can be viewed as the projective limit $\varprojlim K[\Conj(\G/N)]$, the right oblique arrow is injective.

\begin{proposition}\label{beta-L}
The diagram
\begin{equation*}
    \xymatrix{
      K_1(\Lambda(\G)) \ar[d]_{\theta} \ar[r]^{L} & \mathcal{O}[[\Conj(\G)]] \ar[d]^{\beta} \\
      [\prod_{U \in S(\G,\Z)} \Lambda(U^{\ab})^\times]_{(M3a)} \ar[r]^-{\mathcal{L}} & \prod_{U \in S(\G,\Z)} \Lambda(U^{\ab}) \otimes_{\mathbb{Z}_p} \mathbb{Q}_p  }
\end{equation*}
is commutative.
\end{proposition}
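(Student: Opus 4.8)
The plan is to check commutativity of the square componentwise: fixing $x\in K_1(\Lambda(\G))$, I would show $\mathcal{L}_V(\theta(x))=\beta_V(L(x))$ for every $V\in S(\G,\Z)$, carrying out all computations inside the $\Q_p$-algebra $\Lambda^\infty(V^{\ab})$, where the various divisions by $p$ in the definitions of $L$ and $\mathcal{L}_V$ make sense. Write $y_U:=\theta_U(x)$. The first step is to identify $\log(y_U)=\beta_U(\log x)$, where $\beta_U$ is understood as extended to $\Lambda^\infty(\G)^{\ab}$: by functoriality of the logarithm under the projection $\pr^U_{U^{\ab}}$ together with the Oliver--Taylor compatibility \eqref{d:trlogN} (which gives $\log(N^\G_U(x))=\tr^\G_U(\log x)$) one gets $\log(y_U)=\pr^U_{U^{\ab}}(\tr^\G_U(\log x))=\beta_U(\log x)$.

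Next I would expand $\mathcal{L}_V(\theta(x))$ by splitting the logarithm of the defining quotient into a difference of three logarithms (legitimate since $\log\colon\Lambda(V^{\ab})^\times\to\Lambda^\infty(V^{\ab})$ is a homomorphism) and simplifying each term. The numerator gives $\tfrac{1}{p^2|V/\Z|}\log(y_V^{p^2|V/\Z|})=\log(y_V)=\beta_V(\log x)$. For the factor $\varphi_\Z(y_\Z^p)$ one uses that $\varphi_\Z$ is a continuous ring endomorphism, hence commutes with $\log$, obtaining $\tfrac{1}{p[V:\Z]}\varphi_\Z(\beta_\Z(\log x))$. For each $W\in P(V)$ one observes that $\alpha_W(y_W)\in 1+p\Lambda(W)$, so $\varphi_W$ commutes with its logarithm, and Lemma \ref{log} turns $\log(\varphi_W(\alpha_W(y_W)))$ into $p\,\varphi_W(\eta_W(\log y_W))$; pulling the exponent $|W/\Z|$ out of the logarithm, this factor contributes $\tfrac{[W:\Z]}{p[V:\Z]}\varphi_W(\eta_W(\beta_W(\log x)))$. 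Altogether
\begin{equation*}
\mathcal{L}_V(\theta(x))=\beta_V(\log x)-\tfrac{1}{p}\Big(\tfrac{1}{[V:\Z]}\varphi_\Z(\beta_\Z(\log x))+\sum_{W\in P(V)}\tfrac{[W:\Z]}{[V:\Z]}\varphi_W(\eta_W(\beta_W(\log x)))\Big)\ .
\end{equation*}

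Finally I would invoke Lemma \ref{beta} with $f=\log x\in\Lambda^\infty(\G)^{\ab}$ and the subgroup $V$: it says precisely that the bracketed expression equals $\beta_V(\varphi_\G(\log x))$. Hence
\begin{equation*}
\mathcal{L}_V(\theta(x))=\beta_V(\log x)-\tfrac1p\beta_V(\varphi_\G(\log x))=\beta_V\big(\log x-\tfrac1p\varphi_\G(\log x)\big)=\beta_V(L(x))
\end{equation*}
by additivity of $\beta_V$, which is the claim. I do not expect any single hard step here: the substantive inputs are Lemma \ref{log} (which converts the $\alpha_W$-contributions into terms of shape $\varphi_W\eta_W\log$) and Lemma \ref{beta} (which is tailor-made so that the ``denominator'' of $\mathcal{L}_V$ reassembles into $\tfrac1p\beta_V\varphi_\G$); once these are granted, the proposition is a formal manipulation. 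The only care needed is the bookkeeping of the normalising factors $p^2|V/\Z|$ and the exponents $|W/\Z|$, together with the two routine auxiliary facts — naturality of $\log$ under the ring endomorphisms $\varphi_\Z$, $\varphi_W$, and the identity $\log y_U=\beta_U(\log x)$ coming from \eqref{d:trlogN}.
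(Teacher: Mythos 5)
Your proof is correct and follows essentially the same route as the paper's: both rest on the identity $\log(\theta_V(x))=\beta_V(\log x)$ from \eqref{d:trlogN}, Lemma \ref{log} to trade $\log\alpha_W$ for $p\,\eta_W\log$, and Lemma \ref{beta} to reassemble the $\varphi_\Z$- and $\varphi_W\eta_W$-contributions into $\beta_V(\varphi_\G(\log x))$. The only difference is cosmetic — you expand $\mathcal{L}_V(\theta(x))$ and land on $\beta_V(L(x))$, whereas the paper starts from $\beta_U(L(x))$ and arrives at $\mathcal{L}(\theta(x))$.
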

\begin{proof}
Let $x \in K_1(\Lambda(\G))$ and put $(x_U)_U := \theta(x)$. Using \eqref{d:trlogN} we obtain
\begin{equation*}
    \beta_V(\log(x)) = \log(\theta_V(x)) = \log(x_V)
\end{equation*}
for any $V \in S(\G,\Z)$. Using this identity together with Lemmas \ref{log} and \ref{beta} we compute
\begin{align*}
    & \beta_U(L(x)) = \beta_U(\log(x)) - \tfrac{1}{p}\beta_U(\varphi_\G(\log(x))) \\
    & = \log(x_U) - \tfrac{1}{p[U:\Z]}\varphi_\Z(\beta_\Z(\log(x))) - \sum_{W \in P(U)} \tfrac{[W:\Z]}{p[U:\Z]} \varphi_W(\eta_W(\beta_W(\log(x)))) \\
    & = \log(x_U) - \tfrac{1}{p[U:\Z]}\varphi_\Z(\log(x_\Z)) - \sum_{W \in P(U)} \tfrac{[W:\Z]}{p[U:\Z]} \varphi_W(\eta_W(\log(x_W))) \\
    & = \log(x_U) - \tfrac{1}{p[U:\Z]}\varphi_\Z(\log(x_\Z)) - \sum_{W \in P(U)} \tfrac{[W:\Z]}{p^2[U:\Z]} \varphi_W(\log(\alpha_W(x_W))) \\
    & = \tfrac{1}{p^2[U:\Z]} \big( \log(x^{p^2[U:\Z]}_U) - \log(\varphi_\Z(x^p_\Z)) - \sum_{W \in P(U)} \log(\varphi_W(\alpha_W(x_W))^{[W:\Z]}) \big) \\
    & = \mathcal{L}((x_U)_U) = \mathcal{L}(\theta(x))\ .
\end{align*}
\end{proof}

For any $V \in C(\G,\Z)$ we let $P_c(V)$ be the set of all $W \in C(\G,\Z)$ such that $V \subseteq W$ and $[W:V] = p$. We claim that
\begin{equation}\tag{M4}\label{f:M4}
    \alpha_V(x_V) - \prod_{W \in P_c(V)} \varphi_W(\alpha_W(x_W)) \in p \im(\sigma_V) \quad\textrm{for any $V \in C(\G,\Z)$}.
\end{equation}
holds true. We repeat that the image of the homomorphism $\varphi_W$, by construction, is contained in the subring $\Lambda(V)$. Hence the left hand side of \eqref{f:M4} indeed lies in $\Lambda(V)$.

\begin{lemma}\label{add-mult}
Let $(y_U)_U \in \prod_{U \in C(\G,\Z)} \Lambda(U)^\times$ and $V \in C(\G,\Z)$; then:
\begin{itemize}
  \item[i.] We have
\begin{multline*}
    \eta_V \big( \tfrac{1}{p^2|V/\Z|}\log \big(
\frac{y_V^{p^2|V/\Z|}}{\varphi_\Z(y_\Z^p)\prod_{W\in P(V)} \varphi_W(\alpha_W(y_W))^{|W/\Z|}} \big) \big) = \\ \tfrac{1}{p} \log \big( \frac{\alpha_V(y_V)}{\prod_{W \in P_c(V)} \varphi_W(\alpha_W(y_W))} \big) ;
\end{multline*}
  \item[ii.] if $(y_U)_U$ satisfies \eqref{f:M2} then the following assertions are equivalent:
\begin{itemize}
  \item[a.] $\alpha_V(y_V) - \prod_{W \in P_c(V)} \varphi_W(\alpha_W(y_W)) \in p \im(\sigma_V)$;
  \item[b.]
  $\frac{\alpha_V(y_V)}{\prod_{W \in P_c(V)} \varphi_W(\alpha_W(y_W))} \in 1 + p \im(\sigma_V)$;
  \item[c.] $\log ( \frac{\alpha_V(y_V)}{\prod_{W \in P_c(V)} \varphi_W(\alpha_W(y_W))} ) \in p \im(\sigma_V)$.
\end{itemize}
\end{itemize}
\end{lemma}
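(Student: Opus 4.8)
The plan is to prove (i) by a direct computation reducing the left-hand side to $\eta_V$ applied term-by-term, exploiting the fact that $\eta_V$ kills everything supported on the unique index-$p$ subgroup $V'$ of $V$, and then to deduce (ii) from (i) together with Lemma \ref{log}. For (i) I would first observe, using the splitting $\log(\tfrac{y_V^{p^2|V/\Z|}}{\varphi_\Z(y_\Z^p)\prod_W \varphi_W(\alpha_W(y_W))^{|W/\Z|}}) = p^2|V/\Z|\log(y_V) - \varphi_\Z(\log(y_\Z^p)) - \sum_{W\in P(V)} |W/\Z|\log(\varphi_W(\alpha_W(y_W)))$, that applying $\tfrac{1}{p^2|V/\Z|}\eta_V$ to each summand is governed by how $\eta_V$ interacts with the supports. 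The term $\eta_V(\log y_V) = \eta_V(\alpha_V(y_V))/p \cdot (\text{const})$ comes out via Lemma \ref{log}: indeed $\log(\alpha_V(y_V)) = (p\,\mathrm{id} - \tr^V_{V'})(\log y_V) = p\,\eta_V(\log y_V)$, so $\eta_V(\log y_V) = \tfrac1p\log(\alpha_V(y_V))$, and the factor $p^2|V/\Z|$ is absorbed. For the $\varphi_\Z(\log y_\Z^p)$ term I would note it is supported on $\Z \subseteq V'$, hence annihilated by $\eta_V$. The subtle part is the sum over $W\in P(V)$: each $\varphi_W(\alpha_W(y_W))$ is supported on $W'$, and $\eta_V$ applied to such a term is nonzero precisely when $\varphi_W(\alpha_W(y_W))$, pushed into $\Lambda(V)$, has a component generating $V/\Z$; this forces $W' = V$, i.e.\ $W \in P_c(V)$, and the multiplicities $|W/\Z| = p|V/\Z|$ combine with the $\tfrac{1}{p^2|V/\Z|}$ to give exactly $\tfrac1p$. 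Assembling these pieces yields the asserted formula.

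For part (ii), the equivalence of (b) and (c) is immediate from the fact that $\log$ and $\exp$ give mutually inverse bijections between $1 + p\Lambda(V)$ and $p\Lambda(V)$ (here $p \neq 2$ so the usual convergence estimates apply), together with the observation that $\log(1 + p\im(\sigma_V)) = p\im(\sigma_V)$ — this last point uses that $\im(\sigma_V)$ is an ideal in $\Lambda(V)^{W(V)}$, so $\log$ of a principal-ish unit lying in $1 + p\im(\sigma_V)$ stays in the additive group $p\im(\sigma_V)$, and conversely. The equivalence of (a) and (b) amounts to the assertion that the quotient $\frac{\alpha_V(y_V)}{\prod_{W\in P_c(V)}\varphi_W(\alpha_W(y_W))}$ lies in $1 + p\Lambda(V)$ to begin with — which holds because both numerator and denominator lie in $1 + p\Lambda(V)$ — and then translating the multiplicative congruence $u \equiv u' \bmod p\im(\sigma_V)$ with $u, u' \in 1 + p\Lambda(V)$ into $u/u' \in 1 + p\im(\sigma_V)$; since $p\im(\sigma_V) \subseteq p\Lambda(V)$ and $(1+p\Lambda(V))$ is a group, this is an elementary manipulation once one knows $p\im(\sigma_V)\cdot(1+p\Lambda(V)) \subseteq$ itself, again using that $\im(\sigma_V)$ is an ideal in $\Lambda(V)^{W(V)}$ and $\alpha$-values are $W(V)$-invariant up to the relevant congruence. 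The hypothesis \eqref{f:M2} enters precisely here: it guarantees $y_V$, hence $\alpha_V(y_V)$, behaves compatibly under conjugation so that the relevant elements actually sit in $\Lambda(V)^{W(V)}$ modulo $p$, making $\im(\sigma_V)$ the right receptacle.

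The main obstacle I anticipate is the bookkeeping in (i): correctly matching the combinatorial factors $[W:\Z]$, $[V:\Z]$, $p$, $p^2$ coming from the definitions of $\mathcal{L}_V$, $\eta_W$, $\varphi_W$, and the index relations among $\Z \subseteq V \subseteq W$ with $[W:V]=p$, and in particular verifying that $\eta_V \circ \varphi_W \circ \eta_W$ restricted to the image of $\alpha_W$ picks out exactly the $W \in P_c(V)$ and no others. This requires care with which elements of $W/\Z$ generate $V/\Z$ after applying $\varphi_W$ (the $p$-power map on group elements), but it is a finite, explicit check. Once (i) is in hand, (ii) should be routine $p$-adic analysis of the logarithm on principal units, so I do not expect genuine difficulty there beyond being careful that $\im(\sigma_V)$ — not merely $\Lambda(V)$ — is preserved by $\log$ and $\exp$ on the relevant cosets.
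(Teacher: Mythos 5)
Your overall strategy mirrors the paper's: split the argument of the logarithm, use Lemma \ref{log} to convert $\eta_V$ applied to $\log y_V$ into $\tfrac1p\log\alpha_V(y_V)$, show the $y_\Z$-term dies under $\eta_V$, and reduce the $W$-sum to $P_c(V)$ via the identity $\eta_V\circ\varphi_W\circ\eta_W=\varphi_W\circ\eta_W$ for $W\in P_c(V)$ and $=0$ otherwise. But there is a genuine gap: the case $V=\Z$ is not handled, and the generic argument breaks there. When $V=\Z$ one has $\eta_\Z=\id$, the map $\alpha_\Z$ is defined by $f\mapsto f^p/\varphi_\Z(f)$ rather than by a norm, Lemma \ref{log} does not apply (it is stated for $U\neq\Z$), and the term $\varphi_\Z(\log y_\Z^p)$ is certainly not annihilated by $\eta_\Z$. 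The paper treats $V=\Z$ separately (using $\eta_\Z=\id$ together with $P(\Z)=P_c(\Z)$ to see the identity directly) and you would need to do the same.

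In part (ii), the claim that ``$p\im(\sigma_V)\cdot(1+p\Lambda(V))\subseteq$ itself'' is incorrect as stated: $\im(\sigma_V)$ is an ideal only in the subring $\Lambda(V)^{W(V)}$, not in all of $\Lambda(V)$, so multiplying by an arbitrary element of $1+p\Lambda(V)$ need not preserve $p\im(\sigma_V)$. What is actually needed, and what the paper records, is that $\prod_{W\in P_c(V)}\varphi_W(\alpha_W(y_W))$ is a unit in the subring $\Lambda(V)^{W(V)}$; the role of \eqref{f:M2} is precisely that any $g\in W(V)$ permutes the set $P_c(V)$ (and conjugates $y_W$ to $y_{gWg^{-1}}$), which forces the \emph{product} to be $W(V)$-invariant even though individual factors need not be. You gesture at this but the articulation needs to be sharpened before the (a)$\Leftrightarrow$(b) step is actually justified. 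The (b)$\Leftrightarrow$(c) argument via the $\log/\exp$ bijection on $1+p\im(\sigma_V)\leftrightarrow p\im(\sigma_V)$ is the same as the paper's and is fine, provided one also records (as the paper does) that each summand in the series lies in $p\im(\sigma_V)$ because $\sigma_V(f)^i\in\im(\sigma_V)$ and $p^i/i!\in p\mathbb{Z}_p$, and that $p\im(\sigma_V)$ is closed.
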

\begin{proof} i. We check that
\begin{equation*}
    \eta_V \left(\tfrac{1}{|V/\Z|}\log(
\frac{y_V^{p|V/\Z|}}{ \varphi_\Z(y_\Z) })\right)  = \log(\alpha_V(y_V))
\end{equation*}
and
\begin{multline*}
    \eta_V \left(\tfrac{1}{p|V/\Z|}\log(\prod_{W\in P(V)}
\varphi_W(\alpha_W(y_W))^{|W/\Z|})\right)  = \\  \log( \prod_{W \in P_c(V)} \varphi_W(\alpha_W(y_W)))
\end{multline*}
hold true. In case $V = \Z$ we have $\eta_\Z = \id_{\Lambda(\Z)}$, $\alpha_\Z(y_\Z) = \frac{y_\Z^p}{\varphi_\Z(y_\Z)}$ and $P(\Z)=P_c(\Z)$ which makes the two identities obvious. If $V \neq \Z$ we may use Lemma \ref{log} and further reduce to the identities
\begin{equation*}
    \tfrac{1}{p|V/\Z|}\log(
\frac{\alpha_V(y_V)^{p|V/\Z|}}{ \alpha_V(\varphi_\Z(y_\Z)) }) = \log(\alpha_V(y_V))
\end{equation*}
and
\begin{equation*}
    \sum_{W\in P(V)} \tfrac{|W/\Z|}{p|V/\Z|}
\eta_V(\varphi_W(\eta_W(\log(y_W)))) = \\  \sum_{W \in P_c(V)} \varphi_W(\eta_W(\log(y_W))) \ .
\end{equation*}
The first one follows from $\alpha_V \circ \varphi_\Z = 1$ and the second one from
\begin{equation*}
    \eta_V \circ \varphi_W \circ \eta_W =
    \begin{cases}
    \varphi_W \circ \eta_W & \textrm{if $W \in P_c(V)$}, \\
    0 & \textrm{if $W \in P(V) \setminus P_c(V)$.}
    \end{cases}
\end{equation*}
These latter equations are straightforward from the definitions.

ii. We begin by observing that $\alpha_V(y_V)$ and $\prod_{W \in P_c(V)} \varphi_W(\alpha_W(y_W))$ both are units in $1+p\Lambda(V)$. Hence the fraction in b. lies in $1+p\Lambda(V)$ so that its logarithm in c. is defined and contained in $p\Lambda(V)$ (recall that we assume $p \neq 2$). Since $p\im(\sigma_V)$ is an ideal in $\Lambda(V)^{W(V)}$ it suffices, for the equivalence of a. and b., to see that $\prod_{W \in P_c(V)} \varphi_W(\alpha_W(y_W)) \in \Lambda(V)^{W(V)}$. But this follows from \eqref{f:M2} since any $g \in W(V)$ permutes the elements of the set $P_c(V)$. For the equivalence of b. and c. it suffices to show that the isomorphisms
\begin{equation*}
    \xymatrix{
      1+p\Lambda(V) \ar[r]^-{\log} & p\Lambda(V) \ar@<1ex>[l]^-{\exp}  }
\end{equation*}
restrict to isomorphisms
\begin{equation*}
    \xymatrix{
      1+p\im(\sigma_V) \ar[r]^-{\log} & p\im(\sigma_V) \ar@<1ex>[l]^-{\exp} \ .  }
\end{equation*}
Consider
\begin{equation*}
    \log(1+p\sigma_V(f)) = \sum_{i \geq 1} (-1)^{i-1} \frac{p^i \sigma_V(f)^i}{i}
\end{equation*}
and
\begin{equation*}
    \exp(p\sigma_V(f)) = 1 + \sum_{i \geq 1} \frac{p^i \sigma_V(f)^i}{i!} \ .
\end{equation*}
Since $\im(\sigma_V)$ is an ideal in $\Lambda(V)^{W(V)}$ we have $\sigma_V(f)^i \in \im(\sigma_V)$ for any $i \geq 1$. Moreover, $\frac{p^i}{i!} \in p\mathbb{Z}_p$. Hence each summand in the above two series lies in $p\im(\sigma_V)$. As the latter is closed in $\Lambda(V)$ we conclude that
\begin{equation*}
    \log(1+p\sigma_V(f)) \in p\im(\sigma_V) \quad\textrm{and}\quad  \exp(p\sigma_V(f)) \in 1+p\im(\sigma_V) \ .
\end{equation*}
\end{proof}

Prop.\ \ref{beta-L} implies that $(\mathcal{L}_V((x_U)_U))_V$ satisfies $\eqref{f:A1}$ and that
\begin{equation*}
    \mathcal{L}_V((x_U)_U) \in \im(\sigma_V) \qquad\textrm{for any $V \in C(\G,\Z)$}.
\end{equation*}
Using Remark \ref{eta-A3}.ii.b and Lemma \ref{add-mult}.i we deduce that
\begin{equation*}
    p \eta_V( \mathcal{L}_V((x_U)_U) ) = \log ( \frac{\alpha_V( x_V)}{\prod_{W \in P_c(V)} \varphi_W(\alpha_W( x_W))} ) \in p \im(\sigma_V)
\end{equation*}
for any $V \in C(\G,\Z)$. Hence Lemma \ref{add-mult}.ii implies that \eqref{f:M4} holds true for $(x_U)_U = \theta(x)$.

Let $\Phi := \Phi^\G_\Z \subseteq \prod_{\Z \subseteq U \subseteq \G} \Lambda(U^{\ab})^\times$ be the subgroup of all elements $(x_U)_U$ which satisfy, for all $U \subseteq V$ in $S(\G,\Z)$, the conditions
\begin{align}
    & \nu^V_U (x_V) = \pi^V_U (x_U) \quad\textrm{if $[V,V] \subseteq U$}, \tag{\ref{f:M1}} \\
    & x_{gUg^{-1}} = gx_U g^{-1} \quad\textrm{for any $g \in \G$}, \tag{\ref{f:M2}} \\
    & \ver^V_U(x_V)- x_U \in  \im(\sigma_U^V) \quad\textrm{if $[V:U]=p$, and} \tag{\ref{f:M3}} \\
    & \alpha_U(x_U) - \prod_{W \in P_c(U)} \varphi_W(\alpha_W(x_W)) \in p \im(\sigma_U) \quad\textrm{if $U \in C(\G,\Z)$}. \tag{\ref{f:M4}}
\end{align}
So far we have established that
\begin{equation*}
    \im(\theta) \subseteq \Phi \ .
\end{equation*}

\begin{lemma}\label{calL-Phi}
$\mathcal{L}(\Phi) \subseteq \Psi$.
\end{lemma}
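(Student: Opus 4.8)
The goal is to show that for $(x_U)_U \in \Phi$, the image $\mathcal{L}((x_U)_U) = (\mathcal{L}_V((x_U)_U))_V$ lies in $\Psi$, i.e.\ satisfies \eqref{f:A1}, \eqref{f:A2}, and \eqref{f:A3}. Conditions \eqref{f:A1} and \eqref{f:A2} follow immediately from Lemma \ref{M-A}, since the elements of $\Phi$ satisfy \eqref{f:M1} and \eqref{f:M2} (and hence, as noted, \eqref{f:M3a}, so $\mathcal{L}$ is even defined on them). So the entire content is to verify \eqref{f:A3}, namely that
\begin{equation*}
    \mathcal{L}_U((x_V)_V) \in \im(\sigma^{N(U)}_U) \qquad\textrm{for all $U \in S(\G,\Z)$}.
\end{equation*}

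\textbf{Reduction to the cyclic case.} By Theorem \ref{iso-additive} (really by Lemma \ref{injection} together with the analogue of Remark \ref{qp-version} rationally), an element $(a_U)_U \in \prod_U \Lambda(U^{\ab}) \otimes \mathbb{Q}_p$ satisfying \eqref{f:A1} and \eqref{f:A2} lies in $\Psi \otimes \mathbb{Q}_p$ (i.e.\ satisfies \eqref{f:A3} rationally, automatically, by the remark's argument using $\sigma^{N(U)}_U(a_U) = |W(U)| a_U$), and it lies in the integral $\Psi$ as soon as $a_U \in \im(\sigma_U) \subseteq \Lambda(U)$ (integrally) for all $U \in C(\G,\Z)$ — this is precisely the statement extracted in Remark \ref{qp-version}. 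Since we already know from Lemma \ref{M-A} that $\mathcal{L}((x_U)_U)$ satisfies \eqref{f:A1} and \eqref{f:A2}, it suffices to prove the single integrality statement
\begin{equation*}
    \mathcal{L}_V((x_U)_U) \in \im(\sigma_V) \qquad\textrm{for every $V \in C(\G,\Z)$}.
\end{equation*}

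\textbf{The cyclic integrality.} This is where \eqref{f:M4} enters, and I expect it to be the main obstacle — or rather, the place where all the preceding machinery is designed to discharge the work. Fix $V \in C(\G,\Z)$. By Lemma \ref{add-mult}.i we have
\begin{equation*}
    p\,\eta_V\big(\mathcal{L}_V((x_U)_U)\big) = \log\big( \tfrac{\alpha_V(x_V)}{\prod_{W \in P_c(V)} \varphi_W(\alpha_W(x_W))}\big),
\end{equation*}
and since $(x_U)_U \in \Phi$ satisfies \eqref{f:M4}, Lemma \ref{add-mult}.ii tells us the right-hand side lies in $p\im(\sigma_V)$; hence $\eta_V(\mathcal{L}_V((x_U)_U)) \in \im(\sigma_V)$. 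Now $\mathcal{L}_V((x_U)_U)$ already satisfies \eqref{f:A1} (from Lemma \ref{M-A}), so by Remark \ref{eta-A3}.ii.b applied to the tuple $(\mathcal{L}_U((x_V)_V))_{U \in C(\G,\Z)}$, the condition \eqref{f:A3} holds for this tuple if and only if it holds for $(\eta_U(\mathcal{L}_U((x_V)_V)))_U$; and we have just shown each $\eta_V(\mathcal{L}_V((x_U)_U)) \in \im(\sigma_V)$, which is exactly \eqref{f:A3} for the $\eta$-tuple. (Here one uses that the rational tuple $(\mathcal{L}_U)_U$ automatically satisfies the rational version of \eqref{f:A3} by the $\Psi_{\mathbb{Q}_p}$-argument, so Remark \ref{eta-A3}.ii.b is applicable.) Therefore $\mathcal{L}_V((x_U)_U) \in \im(\sigma_V)$ for all $V \in C(\G,\Z)$. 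Combined with \eqref{f:A1} and \eqref{f:A2}, the criterion of Remark \ref{qp-version} yields $\mathcal{L}((x_U)_U) \in \Psi$, as desired.

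\textbf{Remark on the difficulty.} The only genuinely delicate point is keeping the bookkeeping straight between the rational and integral versions of \eqref{f:A3}: one proves \eqref{f:A3} rationally for free, improves it to integrality on the cyclic subgroups via \eqref{f:M4} and Lemmas \ref{add-mult} and \ref{log}, and then invokes Remark \ref{qp-version} to propagate integrality to all of $S(\G,\Z)$. No new computation beyond what Lemmas \ref{M-A} and \ref{add-mult} already provide is needed; the proof is essentially an assembly of those two lemmas with Remark \ref{eta-A3} and Remark \ref{qp-version}.
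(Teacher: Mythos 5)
Your proposal matches the paper's proof almost line for line: both rely on Lemma \ref{M-A} for conditions \eqref{f:A1} and \eqref{f:A2}, then use Lemma \ref{add-mult} together with \eqref{f:M4} to obtain $\eta_V(\mathcal{L}_V((x_U)_U)) \in \im(\sigma_V)$ for cyclic $V$, upgrade this to $\mathcal{L}_V((x_U)_U) \in \im(\sigma_V)$ via the induction encapsulated in Remark \ref{eta-A3}, and conclude with Remark \ref{qp-version}. The one slip is the parenthetical justifying the application of Remark \ref{eta-A3}.ii.b to a rational tuple: the relevant observation is not that the rational version of \eqref{f:A3} holds automatically, but rather that the identity in Remark \ref{eta-A3}.ii.a extends $\Lambda(\Z)\otimes\mathbb{Q}_p$-linearly to tuples in $\prod_V \Lambda(V)\otimes\mathbb{Q}_p$ satisfying \eqref{f:A1} --- which is precisely the remark the paper makes before running the induction explicitly; your invocation of .ii.b is correct in substance, but the stated reason for its applicability is the wrong one.
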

\begin{proof}
Let $(y_U)_U \in \Phi$ and put $(a_V)_V := \mathcal{L}((y_U)_U)$. We already know from Lemma \ref{M-A} that $(a_V)_V \in \Psi_{\mathbb{Q}_p}$. Lemma 4.5 and \eqref{f:M4} imply that
\begin{equation*}
    \eta_V(a_V) \in \im(\sigma_V) \qquad\textrm{for any $V \in C(\G,\Z)$}.
\end{equation*}
At this point we have to go back to Remark \ref{eta-A3}.ii.a and observe that the identity there equally holds for elements in $\prod_{V \in C(\G,\Z)} \Lambda(V) \otimes_{\mathbb{Z}_p} \mathbb{Q}_p$ satisfying \eqref{f:A1}. We check by induction with respect to the order of $V/\Z$ that
\begin{equation*}
    a_V \in \im(\sigma_V) \qquad\textrm{for any $V \in C(\G,\Z)$}.
\end{equation*}
For $V = \Z$ we have $\eta_\Z = \id$ and the claim is trivial. Let $V \neq \Z$ and let $\Z \subseteq W \subseteq V$ be the unique subgroup of index $p$. by the induction hypothesis we have $a_W \in \im(\sigma_W)$. Remark \ref{eta-A3}.i then implies that $\frac{1}{p} a_W \in \im(\sigma_V)$. Hence the identity $a_V = \eta_V(a_V) + \frac{1}{p} a_W$ shows that $a_V \in \im(\sigma_V)$. We now may apply Remark \ref{qp-version} to see that $(a_V)_V \in \Psi$.
\end{proof}

It follows that we have the commutative diagram
\begin{equation*}
    \xymatrix{
      K_1(\Lambda(\G)) \ar[d]_{\theta} \ar[r]^-{L} & \mathcal{O}[[\Conj(\G)]] \ar[d]^{\beta}_{\cong} \\
      \Phi \ar[r]^{\mathcal{L}} & \Psi   }
\end{equation*}
where the right perpendicular arrow, assuming also \eqref{f:H3}, is an isomorphism by Thm.\ \ref{iso-additive}. We define
\begin{equation*}
    SK_1(\Lambda(\G)) := \ker \big( K_1(\Lambda(\G)) \longrightarrow K_1(\Lambda^\infty(\G)) \big)
\end{equation*}
and
\begin{equation*}
    K'_1(\Lambda(\G)) := K_1(\Lambda(\G)) / SK_1(\Lambda(\G)) \ .
\end{equation*}
For any open subgroup $U \subseteq \G$ we consider the diagram
\begin{equation*}
    \xymatrix{
      K_1(\Lambda(\G)) \ar[d]_{N^\G_U} \ar[r] & K_1(\Lambda^\infty(\G)) \ar[d]^{N^\G_U} \\
      K_1(\Lambda(U)) \ar[d] \ar[r] & K_1(\Lambda^\infty(U)) \ar[d] \\
      K_1(\Lambda(U^{\ab}))  \ar[r] & K_1(\Lambda^\infty(U^{\ab}))  \\
      \Lambda(U^{\ab})^\times \ar[u]^{\cong} \ar[r] & \Lambda^\infty(U^{\ab})^\times \ar[u]_{\cong} .  }
\end{equation*}
The uppermost square commutes since
\begin{equation*}
    \Lambda(\G) \otimes_{\Lambda(U)} \Lambda^\infty(U) = \Lambda^\infty(\G) \ .
\end{equation*}
The two lower squares commute for trivial reasons. The indicated isomorphism on the right hand side is a special case of \cite{SV2} Prop.\ 3.1. Since the lowermost horizontal arrow visibly is injective we conclude that
\begin{equation*}
    SK_1(\Lambda(\G)) \subseteq \ker(\theta_U) \qquad\textrm{for any $\Z \subseteq U \subseteq \G$}.
\end{equation*}
Hence $\theta$ factorizes through a homomorphism
\begin{equation*}
    \theta : K'_1(\Lambda(\G)) \longrightarrow \Phi \ .
\end{equation*}
According to \cite{SV2} Cor.\ 3.2 we have
\begin{equation*}
    SK_1(\Lambda(\G)) = \varprojlim SK_1(\mathcal{O}[\G/N]) \ .
\end{equation*}
It therefore follows from \cite{Oli} Thm.\ 6.6 and Thm.\ 7.3 that $SK_1(\Lambda(\G))$ also lies in the kernel of the integral logarithm $L$ and that, more precisely, we have the exact sequence
\begin{equation}\label{f:exactsequ}
     1 \rightarrow \mu(\mathcal{O}) \times \G^{\ab} \xrightarrow{\; \iota \;} K'_1(\Lambda(\G)) \xrightarrow{\; L \;} \mathcal{O}[[\Conj(\G)]] \xrightarrow{\; \omega \;} \G^{\ab} \rightarrow 1
\end{equation}
where $\mu(\mathcal{O}) \subseteq \mathcal{O}^\times$ denotes the subgroup of all roots of unity, $\iota$ is the obvious map, and $\omega$ is given by
\begin{equation*}
    \omega(a[g]_\G) := g^{\Tr_{\mathcal{O}/\mathbb{Z}_p} (a)} [\G, \G] \qquad \textrm{for any $a \in \mathcal{O}$ and $g \in \G$}.
\end{equation*}
Let us now contemplate the commutative diagram
\begin{equation*}
    \xymatrix{
       1  \ar[r] & \mu(\mathcal{O}) \times \G^{\ab} \ar[d]_{=} \ar[r]^-{\iota} & K'_1(\Lambda(\G)) \ar[d]_{\theta} \ar[r]^-{L} & \mathcal{O}[[\Conj(\G)]] \ar[d]_{\beta}^{\cong} \ar[r]^-{\omega} & \G^{\ab} \ar[d]_{=} \ar[r] & 1  \\
     1 \ar[r] & \mu(\mathcal{O}) \times \G^{\ab} \ar[r]^-{\theta\circ \iota} & \Phi \ar[r]^-{\mathcal{L}} & \Psi \ar[r]^-{\omega\circ\beta^{-1}} & \G^{\ab} \ar[r] & 1 .  }
\end{equation*}
Our goal is to show that the lower row is exact as well, which then implies that $\theta$ is an isomorphism. Clearly $\theta_\G \circ\iota: \mu(\mathcal{O}) \times \G^{\ab} \longrightarrow \Lambda(\G^{\ab})^\times$ is injective. Hence $\theta \circ \iota$ is injective, and satisfies $\im(\theta\circ\iota) \subseteq \ker(\mathcal{L})$. For trivial reasons $\omega \circ \beta^{-1}$ is surjective with $\im(\mathcal{L}) \supseteq \ker(\omega\circ \beta^{-1})$. It therefore remains to establish the following two facts:
\begin{itemize}
  \item[a.] $\ker(\mathcal{L}|\Phi) \subseteq \im (\theta\circ \iota)$;
  \item[b.]$\im(\mathcal{L}|\Phi) \subseteq \ker(\omega\circ \beta^{-1})$.
\end{itemize}
But first of all we observe that, if $\G$ is abelian, then $\theta_\G$ and consequently by \eqref{f:M1a}, also $\theta : K_1(\Lambda(\G)) \xrightarrow{\; \cong \;} \Phi$ are isomorphisms. In particular, a. and b. hold in this case.

\begin{lemma}\label{LG}
For $(y_U)_U \in \Phi$ we have $\mathcal{L}_\G((y_U)_U) = \frac{1}{p} \log (\frac{y_\G^p}{\varphi_{\G^{\ab}}(y_\G)})$.
\end{lemma}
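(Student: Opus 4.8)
The plan is to linearise $\mathcal{L}_\G$ by means of the logarithm $\log\colon K_1(\Lambda(U^{\ab}))\to\Lambda^\infty(U^{\ab})$ and then to recognise the result as a special case of Lemma \ref{beta}. Writing $\ell_U:=\log(y_U)\in\Lambda^\infty(U^{\ab})$ and using that this logarithm is a homomorphism, that it commutes with the ring homomorphisms $\varphi_\Z$, $\varphi_W$ ($W\in P(\G)$) and $\varphi_{\G^{\ab}}$ and with the relevant inclusions into $\Lambda(\G^{\ab})$, and that $\log\circ\alpha_W=p\,\eta_W\circ\log$ for $W\neq\Z$ by Lemma \ref{log}, a direct computation from the definition of $\mathcal{L}_\G$ gives
\begin{equation*}
    \mathcal{L}_\G((y_U)_U)=\ell_\G-\tfrac{1}{p|\G/\Z|}\Big(\varphi_\Z(\ell_\Z)+\sum_{W\in P(\G)}|W/\Z|\,\varphi_W(\eta_W(\ell_W))\Big)\ .
\end{equation*}
On the other hand $\tfrac1p\log\big(y_\G^p/\varphi_{\G^{\ab}}(y_\G)\big)=\ell_\G-\tfrac1p\varphi_{\G^{\ab}}(\ell_\G)$, so the assertion of the lemma is equivalent to the identity
\begin{equation*}
    |\G/\Z|\,\varphi_{\G^{\ab}}(\ell_\G)=\varphi_\Z(\ell_\Z)+\sum_{W\in P(\G)}|W/\Z|\,\varphi_W(\eta_W(\ell_W))
\end{equation*}
in $\Lambda^\infty(\G^{\ab})$.

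To prove this identity I would first check that the family $(\ell_U)_{U\in S(\G,\Z)}$ satisfies the additive conditions \eqref{f:A1}, \eqref{f:A2} and \eqref{f:A3}. Condition \eqref{f:A2} is immediate from \eqref{f:M2}. Condition \eqref{f:A1} follows from \eqref{f:M1} upon applying $\log$, using the commutative diagram \eqref{d:trlogN} for the inclusion $U/[V,V]\hookrightarrow V^{\ab}$ on one side and the fact that $\pi^V_U$ is a ring homomorphism on the other. Finally \eqref{f:A3} comes for free at this point, because \eqref{f:A2} forces $\sigma^{N(U)}_U(\ell_U)=|W(U)|\,\ell_U$ and $|W(U)|$ is invertible in $\Lambda^\infty(\Z)$. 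Next I would observe that repeating the arguments of section \ref{sec:additive} with coefficients in $\Lambda^\infty(\Z)$ in place of $\Lambda(\Z)$ — equivalently, passing to the finite quotients $\G/N$ with $N\subseteq\Z$ and invoking Theorem \ref{iso-additive} over the fraction field $K$ of $\cO$ — shows that $\beta$ extends to an isomorphism from $\Lambda^\infty(\G)^{\ab}$ onto the evident analogue $\Psi^\infty$ of $\Psi$. Consequently there is an $f\in\Lambda^\infty(\G)^{\ab}$ with $\beta_U(f)=\ell_U$ for all $U\in S(\G,\Z)$.

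It then remains to apply Lemma \ref{beta} to this $f$ with $U=\G$. Since $\beta_\G=\pr^\G_{\G^{\ab}}$ and $\pr^\G_{\G^{\ab}}\circ\varphi_\G=\varphi_{\G^{\ab}}\circ\pr^\G_{\G^{\ab}}$, the left hand side of that lemma equals $\varphi_{\G^{\ab}}(\beta_\G(f))=\varphi_{\G^{\ab}}(\ell_\G)$, while on the right hand side $\beta_\Z(f)=\ell_\Z$ and $\beta_W(f)=\ell_W$; multiplying through by $[\G:\Z]=|\G/\Z|$ yields exactly the displayed identity, and hence the lemma. I expect the only genuinely non-formal step to be the realisation of $(\ell_U)_U$ as $\beta(f)$ in the previous paragraph, i.e.\ verifying that the additive machinery of section \ref{sec:additive} carries over verbatim to $\Lambda^\infty$-coefficients; once that is granted, what is left is only the bookkeeping with Lemmas \ref{log} and \ref{beta} and the diagram \eqref{d:trlogN}.
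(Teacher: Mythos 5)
Your proof is correct, but it takes a genuinely different route from the paper's. You linearise globally: apply $\log$ to each $y_U$, check that $(\ell_U)_U$ satisfies the additive conditions \eqref{f:A1}--\eqref{f:A3}, produce a preimage $f\in\Lambda^\infty(\G)^{\ab}$ under the (extended) additive isomorphism $\beta$, and then invoke Lemma~\ref{beta} with $U=\G$. The paper instead stays integral: it first observes via Lemma~\ref{calL-Phi} that $\mathcal{L}((y_U)_U)\in\Psi$, then applies the inversion relation $a_\G=\sum_V\tfrac{1}{[\G:V]}\eta_V(a_V)$ (a byproduct of Lemmas~\ref{inverse} and~\ref{image-delta}), rewrites each summand using Lemma~\ref{add-mult}.i, and carries out a multiplicative telescoping before applying $\varphi_{\G^{\ab}}$. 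Both proofs rest on the same core (the additive isomorphism over $C(\G,\Z)$) and both end by hitting a derived identity with $\varphi_{\G^{\ab}}$; the difference is where the manipulation happens. Your approach buys conceptual economy — no telescoping, just one application of the general Lemma~\ref{beta} — at the cost of having to extend the additive isomorphism to $\Lambda^\infty$-coefficients, whereas the paper avoids that extension by going through Lemma~\ref{calL-Phi} but has to do more by hand. The step you flag as potentially non-formal is in fact fine: one works level by level over $\G/N$ with $N\subseteq\Z$ open normal, where the $K$-tensored version of Theorem~\ref{iso-additive} holds (here \eqref{f:A3} is automatic over $K$, exactly as in Remark~\ref{qp-version}), and the passage to the projective limit is harmless since $S(\G,\Z)$ is a finite index set, so $\prod_U$ commutes with $\varprojlim_N$.
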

\begin{proof}
As a consequence of Lemmas \ref{inverse} and \ref{image-delta} any $(a_U)_U \in \Psi$ satisfies
\begin{equation*}
    a_\G = \sum_{V \in C(\G,\Z)} \tfrac{1}{[\G:V]} \eta_V(a_V) \qquad\textrm{in $\Lambda(\G^{\ab})$}.
\end{equation*}
Hence
\begin{equation*}
    \mathcal{L}_\G((y_U)_U) = \sum_{V \in C(\G,\Z)} \tfrac{1}{[\G:V]} \eta_V(\mathcal{L}_V((y_U)_U)) \qquad\textrm{in $\Lambda(\G^{\ab})$},
\end{equation*}
and, inserting the definition and using Lemma \ref{add-mult}.i, we obtain
\begin{align*}
    & \mathcal{L}_\G((y_U)_U) = \sum_{V \in C(\G,\Z)} \tfrac{1}{p|\G/V|} \log \big( \tfrac{\alpha_V(y_V)}{\prod\limits_{W \in P_c(V)} \varphi_W(\alpha_W(y_W))} \big) \\
    &  = \tfrac{1}{p|\G/\Z|} \log \big( \tfrac{\alpha_\Z(y_\Z)}{\prod\limits_{|W/\Z|=p} \varphi_W(\alpha_W(y_W))} \big) \\
    & \qquad\qquad\qquad\qquad + \sum_{V \in C(\G,\Z), V \neq \Z} \tfrac{1}{p|\G/V|} \log \big( \tfrac{\alpha_V(y_V)}{\prod\limits_{W \in P_c(V)} \varphi_W(\alpha_W(y_W))} \big) \\
    &  = \tfrac{1}{p^2|\G/\Z|}
    \log \big( \tfrac{\alpha_\Z(y_\Z)^p}{\prod\limits_{|W/\Z|=p} \varphi_W(\alpha_W(y_W))^p} \cdot \prod_{V \in C(\G,\Z), V \neq \Z}  \tfrac{\alpha_V(y_V)^{p|V/\Z|}}{\prod\limits_{W \in P_c(V)} \varphi_W(\alpha_W(y_W))^{p|V/\Z|}} \big) \\
    &  = \tfrac{1}{p^2|\G/\Z|}
    \log \big( \tfrac{\prod\limits_{W \in C(\G,\Z)} \alpha_W(y_W)^{p|W/\Z|} }{ \prod\limits_{W \in C(\G,\Z), W \neq \Z} \varphi_W(\alpha_W(y_W))^{|W/\Z|}} \big)
\end{align*}
emphasizing that the computation takes place in $\Lambda(\G^{\ab})$. Comparing this to the definition
\begin{equation*}
    \mathcal{L}_\G((y_U)_U) = \tfrac{1}{p^2|\G/\Z|}\log \big(
\tfrac{y_\G^{p^2|\G/\Z|}}{\varphi_\Z(y_\Z^p)\prod\limits_{W\in C(\G,\Z), W \neq \Z} \varphi_W(\alpha_W(y_W))^{|W/\Z|}} \big)
\end{equation*}
leads to the identity
\begin{align*}
    \log
\tfrac{y_\G^{p^2|\G/\Z|}}{\varphi_\Z(y_\Z^p)} & = \log \prod\limits_{W \in C(\G,\Z)} \alpha_W(y_W)^{p|W/\Z|} \\ & = \log \big( \tfrac{y_\Z^{p^2}}{\varphi_\Z(y_\Z^p)} \cdot \prod\limits_{W \in C(\G,\Z), W \neq \Z} \alpha_W(y_W)^{p|W/\Z|} \big)
\end{align*}
and hence to
\begin{equation*}
     \log y_\G^{p|\G/\Z|} = \log \big( y_\Z^p \cdot \prod\limits_{W \in C(\G,\Z), W \neq \Z} \alpha_W(y_W)^{|W/\Z|} \big)
\end{equation*}
where, we repeat, all factors in the argument of $\log$ are viewed in $\Lambda(\G^{\ab})$. We therefore may apply the ring homomorphism $\varphi_{\G^{\ab}}$, and we get
\begin{equation*}
    \log \varphi_{\G^{\ab}}(y_\G)^{p|\G/\Z|} = \log \big( \varphi_\Z(y_\Z^p) \cdot \prod\limits_{W \in C(\G,\Z), W \neq \Z} \varphi_W(\alpha_W(y_W))^{|W/\Z|} \big) \ .
\end{equation*}
By inserting this back into the definition we finally arrive at
\begin{equation*}
    \mathcal{L}_\G((y_U)_U) = \tfrac{1}{p^2|\G/\Z|}\log \big(
\tfrac{y_\G^{p^2|\G/\Z|}}{\varphi_{\G^{\ab}}(y_\G)^{p|\G/\Z|}} \big) =  \tfrac{1}{p} \log (\tfrac{y_\G^p}{\varphi_{\G^{\ab}}(y_\G)}) \ .
\end{equation*}
\end{proof}

\begin{lemma}\label{fact-b}
b. is satisfied.
\end{lemma}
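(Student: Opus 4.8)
The plan is to reduce everything to the $\G$-component of $\mathcal{L}$ and to exploit the already-established exact sequence \eqref{f:exactsequ}, applied to the abelian group $\G^{\ab}$. Concretely: for $(y_U)_U \in \Phi$ we know by Lemma \ref{calL-Phi} that $\mathcal{L}((y_U)_U) \in \Psi = \im(\beta)$, so put $f := \beta^{-1}(\mathcal{L}((y_U)_U)) \in \mathcal{O}[[\Conj(\G)]]$; what has to be shown is $\omega(f) = 1$ in $\G^{\ab}$.

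First I would note that $\omega$ factors through the abelianization in the obvious way: inspecting the formula $\omega(a[g]_\G) = g^{\Tr_{\mathcal{O}/\mathbb{Z}_p}(a)}[\G,\G]$ shows that $\omega = \bar\omega \circ \pr^\G_{\G^{\ab}}$, where $\bar\omega \colon \Lambda(\G^{\ab}) \to \G^{\ab}$ is the continuous homomorphism sending $a g \mapsto g^{\Tr_{\mathcal{O}/\mathbb{Z}_p}(a)}$ for $a \in \mathcal{O}$ and $g \in \G^{\ab}$. But $\bar\omega$ is precisely the third map $\omega_{\G^{\ab}}$ appearing in the Oliver--Taylor sequence \eqref{f:exactsequ} for the abelian group $\G^{\ab}$ (for which $\Conj(\G^{\ab}) = \G^{\ab}$, $[\G^{\ab},\G^{\ab}] = 1$ and $\mathcal{O}[[\Conj(\G^{\ab})]] = \Lambda(\G^{\ab})$).

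Next I would compute $\pr^\G_{\G^{\ab}}(f)$. Since $\tr^\G_\G = \id$ we have $\beta_\G = \pr^\G_{\G^{\ab}}$, hence $\pr^\G_{\G^{\ab}}(f)$ is the $\G$-component of $\beta(f) = \mathcal{L}((y_U)_U)$, namely $\mathcal{L}_\G((y_U)_U)$. By Lemma \ref{LG} this equals $\tfrac{1}{p}\log\bigl(y_\G^p/\varphi_{\G^{\ab}}(y_\G)\bigr)$ in $\Lambda(\G^{\ab})$. Viewing $y_\G$ as an element of $K_1(\Lambda(\G^{\ab})) = \Lambda(\G^{\ab})^\times$, and using that $\varphi_{\G^{\ab}}$ is a continuous ring endomorphism of $\Lambda^\infty(\G^{\ab})$ restricting to the Frobenius on $\mathcal{O}$ (so it commutes with the $\log$-series, which has $\mathbb{Q}_p$-coefficients), this rewrites as $\log(y_\G) - \tfrac{1}{p}\varphi_{\G^{\ab}}(\log(y_\G)) = L_{\G^{\ab}}(y_\G)$, the integral logarithm of Oliver--Taylor for $\G^{\ab}$. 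Putting the pieces together,
\[
 \omega(f) = \bar\omega\bigl(\pr^\G_{\G^{\ab}}(f)\bigr) = \omega_{\G^{\ab}}\bigl(\mathcal{L}_\G((y_U)_U)\bigr) = \omega_{\G^{\ab}}\bigl(L_{\G^{\ab}}(y_\G)\bigr) = 1,
\]
the last equality being the inclusion $\im(L_{\G^{\ab}}) \subseteq \ker(\omega_{\G^{\ab}})$ from the exactness of \eqref{f:exactsequ} for $\G^{\ab}$. Thus $\mathcal{L}((y_U)_U) = \beta(f) \in \ker(\omega\circ\beta^{-1})$, which is b.

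There is no real obstacle here; the argument is essentially a bookkeeping reduction resting on two already-proved inputs: Lemma \ref{LG}, which expresses $\mathcal{L}_\G$ as an abelian integral logarithm, and the exact sequence \eqref{f:exactsequ} for the (possibly finite) abelian group $\G^{\ab}$. The only point deserving explicit verification is that the map $\bar\omega$ read off from the formula for $\omega$ on $\Lambda(\G^{\ab})$ coincides with $\omega_{\G^{\ab}}$ from \eqref{f:exactsequ}, which is immediate from the displayed formula for $\omega$.
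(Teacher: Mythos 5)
Your argument is exactly the one the paper gives: factor $\omega$ through $\pr^\G_{\G^{\ab}}$, identify the $\G$-component of $\mathcal{L}$ with $L_{\G^{\ab}}(y_\G)$ via Lemma \ref{LG}, and invoke the exactness of \eqref{f:exactsequ} for the abelian group $\G^{\ab}$. Your write-up is a bit more explicit than the paper's terse three-line display (which in fact silently drops an $\omega$ from its middle term), but it is the same reduction resting on the same two inputs.
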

\begin{proof}
Let $(y_U)_U \in \Phi$. Using Lemma \ref{LG} we compute
\begin{align*}
    (\omega \circ \beta^{-1})(\mathcal{L}((y_U)_U)) & = \omega(\mathcal{L}_\G((y_U)_U))
     = \frac{1}{p} \log (\frac{y_\G^p}{\varphi_{\G^{\ab}}(y_\G)}) \\
    & = \omega(L_{\G^{\ab}}(y_\G))
    = 1 \ ,
\end{align*}
where the last identity holds by the exact sequence \eqref{f:exactsequ} for the group $\G^{\ab}$.
\end{proof}

\begin{lemma}
a. is satisfied.
\end{lemma}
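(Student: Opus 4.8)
The plan is first to peel off the contribution of $\mu(\cO)\times\G^{\ab}$, reducing to the case $y_\G = 1$, and then to show that any such element of $\ker(\mathcal{L}|\Phi)$ is in fact trivial. For the reduction, let $(y_U)_U \in \ker(\mathcal{L}|\Phi)$. By Lemma \ref{LG} the identity $\mathcal{L}_\G((y_U)_U) = 0$ reads $L_{\G^{\ab}}(y_\G) = \tfrac{1}{p}\log\bigl(y_\G^p/\varphi_{\G^{\ab}}(y_\G)\bigr) = 0$, so the exact sequence \eqref{f:exactsequ} for the abelian group $\G^{\ab}$ yields $y_\G = \iota_{\G^{\ab}}(\zeta,\bar g)$ for some $\zeta\in\mu(\cO)$ and $g\in\G$. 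The element $z := \theta(\iota(\zeta,\bar g)) \in \im(\theta\circ\iota)$ lies in $\ker(\mathcal{L}|\Phi)$, since $\iota(\zeta,\bar g)\in\ker L$ by exactness of the upper row and hence $\mathcal{L}(z) = \beta(L(\iota(\zeta,\bar g))) = 0$ by Proposition \ref{beta-L}, and clearly $z_\G = y_\G$. Replacing $(y_U)_U$ by $(y_U z_U^{-1})_U$ we may assume $y_\G = 1$; because $\theta_\G\circ\iota$ is injective, $\im(\theta\circ\iota)$ meets $\{(x_U)_U : x_\G = 1\}$ only in $1$, so it now suffices to prove that $(y_U)_U\in\Phi$ with $\mathcal{L}((y_U)_U) = 0$ and $y_\G = 1$ forces $(y_U)_U = 1$. (The abelian case is already known; the argument below works uniformly.)

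Next I would pass to the cyclic subquotients. From $\mathcal{L}_V((y_U)_U) = 0$, Lemma \ref{add-mult}.i, and the injectivity of $\log$ on $1 + p\Lambda(V)$ one gets $\alpha_V(y_V) = \prod_{W\in P_c(V)}\varphi_W(\alpha_W(y_W))$ for every $V\in C(\G,\Z)$; as $\varphi_W(1) = 1$, a downward induction on $|V/\Z|$ — starting from the maximal members of $C(\G,\Z)$, for which $P_c(V) = \varnothing$ — gives $\alpha_V(y_V) = 1$ for all $V\in C(\G,\Z)$. The case $V = \Z$ reads $y_\Z^p = \varphi_\Z(y_\Z)$, while \eqref{f:M3a} together with $y_\G = 1$ gives $y_\Z\in 1+p\Lambda(\Z)$; applying $\log$ and iterating the resulting relation $p\log y_\Z = \varphi_\Z(\log y_\Z)$ — a Frobenius‑eigenvalue argument, exploiting that $\varphi_\Z$ is injective modulo $p$ on the pro‑$p$ part of $\Lambda(\Z)$ — forces $\log y_\Z = 0$, so $y_\Z$ lies in $\ker\log = \mu(\cO)\times\Z_{\mathrm{tors}}$ (by \eqref{f:exactsequ} for $\Z$, cf.\ \cite{Oli} Ch.\ 6); combined with $y_\Z\in 1+p\Lambda(\Z)$ and the injectivity of $\mu(\cO)$ in the residue field this gives $y_\Z = 1$. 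Then for an arbitrary $U\in S(\G,\Z)$ the definition of $\mathcal{L}_U$, the vanishing $\mathcal{L}_U((y_V)_V) = 0$, and the values just found give $y_U^{\,p^2|U/\Z|} = \varphi_\Z(y_\Z^{\,p})\prod_{W\in P(U)}\varphi_W(\alpha_W(y_W))^{|W/\Z|} = 1$; hence $\log y_U = 0$, so $y_U\in\ker\log = \mu(\cO)\times(U^{\ab})_{\mathrm{tors}}$, and by \eqref{f:M3a} (again using $y_\Z = 1$) $y_U$ is $\equiv 1$ modulo the maximal ideal of $\Lambda(U^{\ab})$, which kills the $\mu(\cO)$‑component. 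Thus, for every $U\in S(\G,\Z)$, $y_U$ is the image in $\Lambda(U^{\ab})^\times$ of a torsion group element $\bar g_U\in U^{\ab}$; and $\bar g_\G = 1$ because $y_\G = 1$.

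It remains to propagate triviality using \eqref{f:M3}. The key is the elementary fact that for $U\unlhd V$ with $[V:U] = p$ and group elements $h_1, h_2\in U^{\ab}$, one has $h_1 - h_2\in\im(\sigma^V_U)$ only if $h_1 = h_2$: expanding $\sigma^V_U(f)$ in the basis $U^{\ab}$ shows it is a $\Lambda(\Z\cap U)$‑linear combination of the $V/U$‑conjugation orbit sums and of the elements $p\cdot h$ with $h$ fixed by $V/U$, and inspection of the coefficient of $h_1$ shows $h_1 - h_2$ cannot be of this form unless $h_1 = h_2$. Since $y_V = \bar g_V$ is a group element, $\ver^V_U(y_V)$ is again a group element (the ring‑homomorphism extension of the transfer agrees with the transfer on $U^{\ab}$), so \eqref{f:M3} at an index‑$p$ step $U\subseteq V$ forces $\ver^V_U(\bar g_V) = \bar g_U$. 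Choosing a chain $\G = V_0\supset V_1\supset\cdots\supset V_n = U$ with each $[V_i : V_{i+1}] = p$ and all $V_i\supseteq\Z$ — available because $\G/\Z$ is a finite $p$‑group — and using transitivity of the transfer (\cite{Hup} IV.1), we obtain $\bar g_U = \ver^\G_U(\bar g_\G) = \ver^\G_U(1) = 1$, hence $y_U = 1$, for every $U\in S(\G,\Z)$; this proves $(y_U)_U = 1$, as required.

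I expect the main obstacle to lie in the second paragraph, namely in turning the divisibility and congruence data into the assertion that every component $y_U$ is an honest group element — this is where the integrality conditions \eqref{f:M3a} and \eqref{f:M4} and Oliver's injectivity results must be combined — together with the transfer‑rigidity lemma of the third paragraph. The reduction to $y_\G = 1$ and the $\alpha$‑cascade, by contrast, should be routine once Lemmas \ref{LG} and \ref{add-mult} are in place.
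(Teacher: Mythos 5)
Your argument is correct in substance and takes a genuinely different route from the paper's. The paper works with a general $(x_U)_U\in\ker(\mathcal L|\Phi)$ throughout: after the $\alpha$-cascade it shows $x_\Z\in\mu(\cO)\times\Z$ via $\ker L_\Z$, then $x_U\in\mu(\cO)\times U$ for $U\in C(\G,\Z)$ by a separate induction using $\alpha_U(x_U)=1\Rightarrow x_U^p=N^U_\Z(x_U)=x_\Z$ together with torsion-freeness of $\Lambda(U)^\times/(\mu(\cO)\times U)$, then $x_U\in\mu(\cO)\times U^{\ab}$ for all $U\in S(\G,\Z)$, and finally invokes the $\sigma^V_U$-rigidity on elements $\zeta g-\xi h$ (with roots of unity attached) to get $x_U=\ver^\G_U(x_\G)$. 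You instead peel off the image of $\mu(\cO)\times\G^{\ab}$ at the very start, reducing to $y_\G=1$; this is not done in the paper, and it genuinely streamlines the later steps: (i) you get $y_\Z=1$ outright rather than merely $y_\Z\in\mu(\cO)\times\Z$; (ii) the relation $y_U^{p^2|U/\Z|}=1$ then drops out directly for all $U\in S(\G,\Z)$, so you never need the intermediate $C(\G,\Z)$-induction; (iii) the rigidity step only needs the cleaner statement about bare group elements $h_1-h_2\in\im(\sigma^V_U)$, without the $\zeta,\xi\in\mu(\cO)$ bookkeeping and augmentation step the paper goes through first. The trade-off is that you invoke Proposition~\ref{beta-L} and the injectivity of $\theta_\G\circ\iota$ up front; the paper instead pays later by carrying $\mu(\cO)$-factors through the rigidity argument.

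One step of yours, as written, is not quite right: you justify $\log y_\Z=0$ from $p\log y_\Z=\varphi_\Z(\log y_\Z)$ by asserting that ``$\varphi_\Z$ is injective modulo $p$ on the pro-$p$ part of $\Lambda(\Z)$''. Under hypotheses \eqref{f:H1}--\eqref{f:H4} the subgroup $\Z$ need not be torsion-free (only in Section~\ref{sec:multiplicative-B} is it chosen so), and when $\Z$ has $p$-torsion the reduction $\Lambda(\Z)/p$ has nilpotents, so $\varphi_\Z$ is \emph{not} injective mod $p$. The intended conclusion $y_\Z=1$ is nonetheless correct, and the cleanest fix is to argue as the paper does: $\alpha_\Z(y_\Z)=1$ is exactly $L_\Z(y_\Z)=0$, so $y_\Z\in\ker L_\Z=\mu(\cO)\times\Z$ by \eqref{f:exactsequ}, and combining this with $y_\Z\in 1+p\Lambda(\Z)$ (from \eqref{f:M3a} and $y_\G=1$) kills first the $\mu(\cO)$-component (unramifiedness) and then the group component (group elements remain distinct mod $p$). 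Also, your sketch of the $\sigma^V_U$-rigidity (``inspection of the coefficient of $h_1$'') elides the two delicate points the paper attends to --- that the $\Lambda(\Z)$-basis is indexed by $U/\Z[U,U]$, not all of $U^{\ab}$, so ``the coefficient'' has to be split into a basis representative and a $\Z$-part, and that $p\neq2$ is used to show the representatives are individually $V/U$-fixed --- but the underlying idea is the right one.
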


\begin{proof}
Let $(x_U)_U$ be in $\ker(\mathcal{L}|\Phi)$. This in particular means by Lemma \ref{add-mult}.i that
\begin{equation*}
    \log \big(\frac{\alpha_U(x_U)}{\prod_{V \in P_c(U)} \varphi_V(\alpha_V(x_V))} \big) = 0
\end{equation*}
for any $U \in C(\G,\Z)$. Since the logarithm is taken of an element in $1+p \Lambda(U)$ we deduce that
\begin{equation*}
    \alpha_U(x_U) = \prod_{V \in P_c(U)} \varphi_V(\alpha_V(x_V))
\end{equation*}
for any $U \in C(\G,\Z)$. If $U$ maximal in $C(\G,\Z)$ then $P_c(U)$ is empty and hence $\alpha_U(x_U)=1$. By downward induction we obtain
\begin{equation}\label{f:alpha}
    \alpha_U(x_U)=1 \qquad\textrm{for any $U \in C(\G,\Z)$.}
\end{equation}
In particular, for $U=\Z$ we get
\begin{equation*}
    \frac{x_\Z^p}{\varphi_\Z(x_\Z)}=\alpha_\Z(x_\Z)=1 \ .
\end{equation*}
But $\Z$ is abelian. In this case the integral logarithm is
\begin{align*}
    L_\Z : K_1(\Lambda(\Z)) & \longrightarrow \Lambda(\Z) \\
    x & \longmapsto \frac{1}{p} \log \big( \frac{x^p}{\varphi_\Z(x)} \big) \ .
\end{align*}
We see that $x_\Z \in \ker(L_\Z) = \mu(\cO) \times \Z \subseteq K_1(\Lambda(\Z)$ by \eqref{f:exactsequ}. Next suppose that $[U:\Z]=p$. Then, using \eqref{f:M1a}, we get
\begin{equation*}
    x_U^p = N^U_\Z(x_U) = x_\Z \in \mu(\cO) \times \Z \subseteq \mu(\cO) \times U \subseteq K_1(\Lambda(U)) \ .
\end{equation*}
From \eqref{f:exactsequ} for $L_U$ we see that $\Lambda(U)^\times/(\mu(\cO)\times U)$ is torsion free. We conclude that $x_U \in \mu(\mathcal{O})\times U$. Again by induction it then follows that
\begin{equation*}
    x_U \in \mu(\cO)\times U \subseteq K_1(\Lambda(U)) \qquad\textrm{for any $U \in C(\G,\Z)$.}
\end{equation*}
Similarly as above we have the relation
\begin{equation*}
    \frac{x_U^{p^2|U/\Z|}}{\prod_{W\in P(U)} \varphi_W(\alpha_W(x_W))^{|W/\Z|}}=1
\end{equation*}
for all $U$ in $S(\G,\Z)$. But by \eqref{f:alpha}
the denominator is equal to one, i.\ e., $x_U^{p^2|U/\Z|}=1$. Again by \eqref{f:exactsequ} for $L_{U^{\ab}}$ we see that the only torsion of $\Lambda(U^{\ab})^\times $ is contained in $\mu(\mathcal{O})\times
U^{\ab}$. Hence $x_U=1$. We now have established that
\begin{equation*}
    (x_U)_U\in \prod_{U\in S(\G,\Z)} \big( \mu(\mathcal{O})\times U^{\ab}\big)\cap \Phi \ .
\end{equation*}
At this point we need the condition \eqref{f:M3}. We claim that it implies
\begin{equation*}
    x_U=\mathrm{ ver}^\G_U(x_\G) \qquad\textrm{for any $U \in S(\G,\Z)$}.
\end{equation*}
Indeed, let first $U$ be of index $p$ in $\G$ and assume that for $g,h\in U^{\ab}$ and $\zeta,\xi\in \mu(\mathcal{O})$ the element $\zeta g-\xi h$ lies in $\im(\sigma^\G_U)$. Then,
under the augmentation map $\epsilon$ of $\Lambda(U^{\ab})$ we have that $\epsilon(\zeta g-\xi
h)=\zeta-\xi\in \epsilon(\im(\sigma^\G_U))=|\G/U|\mathcal{O}$ is divisible by $p$. Since $\zeta$ and $\xi$  are roots of unity in an unramified extension of $\mathbb{Z}_p$ this implies that $\zeta=\xi$. As $\zeta$ is a unit in $\cO$ it follows that $g-h$ lies in $\im(\sigma^\G_U)$ and therefore is, as any element in $\im(\sigma^\G_U)$, invariant under the conjugation action of $\G/U$. We choose a set of representatives $R \subseteq U^{\ab}$ of the cosets in $U/\Z[U,U]$ and  write
\begin{equation*}
    g =z_g u_g\; ,\ h = z_h u_h,\ \textrm{and}\ g-h = \sigma^\G_U (\sum_{u \in R} a_u u)
\end{equation*}
with $u_g, u_h \in R$, $z_g,z_h \in \Z[U,U]/[U,U]$, and $a_u \in \Lambda(\Z)$. On the one hand the invariance of $g-h$ implies that $u_g$ and $u_h$ are both invariant as $p\neq 2$. On the other hand any $\tau \in \G/U$ induces a permutation $\tau$ of $R$ such that $\tau u \tau^{-1} = b_{\tau,u}\tau(u)$ with $b_{\tau,u} \in \Z$ for any $u \in R$. We compute
\begin{align*}
    z_g u_g - z_h u_h = g - h = \sigma^\G_U (g-h) & = \sum_{u \in R} \sum_{\tau \in \G/U} a_u \tau u\tau^{-1} \\
    & = \sum_{u \in R} \sum_{\tau \in \G/U} a_u b_{\tau,u} \tau (u) \\
    & = \sum_{u \in R} (\sum_{\tau \in \G/U} a_{\tau^{-1}(u)} b_{\tau,\tau^{-1}(u)}) u \ .
\end{align*}
Since $\tau(u_g) = u_g$, $\tau(u_h) = u_h$, and $b_{\tau,u_g} = b_{\tau,u_h} = 1$ for any $\tau$ it follows that
\begin{equation*}
    z_g = [\G:U] a_{u_g} = p a_{u_g} \quad\textrm{and}\quad z_h = [\G:U] a_{u_h} = p a_{u_h}
\end{equation*}
provided $u_g \neq u_h$. Since this is not possible we must have $u := u_g = u_h$. But then $z_g - z_h = p a_u$ which implies $z_g = z_h$ and hence $g=h$. This proves our claim for $U$ of index $p$ in $\G$. The general case then follows by induction. This together with \eqref{f:N-ver} shows that $(x_U)_U \in \im(\theta\circ\iota)$.
\end{proof}

\begin{theorem}\label{iso-multiplicative}
Assuming \eqref{f:H1} - \eqref{f:H4} the map $\theta : K'_1(\Lambda(\G)) \xrightarrow{\; \cong \;} \Phi^\G_\Z$ is an isomorphism.
\end{theorem}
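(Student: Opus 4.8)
The plan is to read the assertion off formally from the commutative diagram displayed just before the statement. Its upper row is the exact sequence \eqref{f:exactsequ}, its left and right vertical maps are the identity, and its middle square commutes by Prop.\ \ref{beta-L} together with Lemma \ref{calL-Phi} (the latter being what lets us regard $\mathcal{L}$ as landing in $\Psi$ rather than in the rational hull); moreover, once \eqref{f:H3} is assumed, $\beta$ is an isomorphism by Thm.\ \ref{iso-additive}. Thus everything reduces to the exactness of the lower row, the remaining hypotheses \eqref{f:H1}, \eqref{f:H2} and \eqref{f:H4} having already entered in the construction of $\Phi$, of $\ver$, and of the conditions \eqref{f:M1}--\eqref{f:M4}.

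First I would assemble what is already in hand: $\theta$ factors through $K'_1(\Lambda(\G))$ with image contained in $\Phi$; the composite $\theta\circ\iota$ is injective because $\theta_\G\circ\iota$ already is; one has $\im(\theta\circ\iota)\subseteq\ker(\mathcal{L})$; and the last two lemmas supply the two remaining inclusions, namely a.\ ($\ker(\mathcal{L}|\Phi)\subseteq\im(\theta\circ\iota)$) and b.\ ($\im(\mathcal{L}|\Phi)\subseteq\ker(\omega\circ\beta^{-1})$). To finish the exactness of the lower row I would note that $\omega\circ\beta^{-1}$ is surjective, since $\beta$ is bijective and $\omega$ is surjective by \eqref{f:exactsequ}, and that $\ker(\omega\circ\beta^{-1})\subseteq\im(\mathcal{L}|\Phi)$; the latter is a one-line chase, given $(a_U)_U\in\Psi$ with $\omega(\beta^{-1}((a_U)_U))=1$, exactness of the upper row yields $x\in K'_1(\Lambda(\G))$ with $L(x)=\beta^{-1}((a_U)_U)$, and then commutativity of the middle square together with $\im(\theta)\subseteq\Phi$ gives $(a_U)_U=\beta(L(x))=\mathcal{L}(\theta(x))\in\mathcal{L}(\Phi)$.

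With both rows exact and the three outer vertical arrows isomorphisms, the five lemma forces $\theta\colon K'_1(\Lambda(\G))\to\Phi$ to be an isomorphism; equivalently one can run the short chase directly. For injectivity: if $\theta(x)=1$ then $\beta(L(x))=\mathcal{L}(\theta(x))=1$, hence $L(x)=0$, hence $x=\iota(z)$, and $z$ is trivial since $\theta\circ\iota$ is injective. For surjectivity: given $(x_U)_U\in\Phi$, pick --- using b.\ and the exactness of the upper row --- an $x$ with $L(x)=\beta^{-1}(\mathcal{L}((x_U)_U))$; then $\theta(x)$ and $(x_U)_U$ have the same image under $\mathcal{L}$, so by a.\ they differ by an element of $\ker(\mathcal{L}|\Phi)=\im(\theta\circ\iota)\subseteq\im(\theta)$, whence $(x_U)_U\in\im(\theta)$. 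I do not expect a genuine obstacle at this last stage: the real content has been deposited in Thm.\ \ref{iso-additive} and in the lemmas proving a.\ and b., and what remains is the bookkeeping of a morphism between two four-term exact sequences.
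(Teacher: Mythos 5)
Your proposal is correct and follows essentially the same route as the paper: it sets up the same morphism of four-term exact sequences, invokes Prop.\ \ref{beta-L}, Lemma \ref{calL-Phi}, and Thm.\ \ref{iso-additive} for the middle square and the isomorphism $\beta$, reduces to exactness of the lower row via the two inclusions a.\ and b.\ from the final two lemmas plus the short chase for $\ker(\omega\circ\beta^{-1})\subseteq\im(\mathcal{L}|\Phi)$, and closes with the five lemma (equivalently a direct diagram chase). The only difference is presentational: the paper absorbs this bookkeeping into the discussion preceding the theorem statement rather than a displayed proof, but the content is identical.
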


\section{The integral logarithm for $B(\G)$}\label{sec:log-B}

In this section we will construct an extension of the integral logarithm $L$ of Oliver and Taylor to $K_1(B(\G))$. We assume \eqref{f:H2}, \eqref{f:H4}, and that $H$ is finite, which implies \eqref{f:H1}.
The ring $B(\G)$ being local we have the surjection
$B(\G)^\times \twoheadrightarrow K_1(B(\G))$. The usual computation for the convergence of the logarithm series
\begin{equation*}
    \log(x)=\mathrm{Log}(1+x)=\sum_{n\geq 1} \frac{(-1)^{n+1}}{n}x^n
\end{equation*}
shows that it induces a homomorphism
\begin{equation*}
    \log : 1+ \Jac(B(\G)) \longrightarrow (B(\G)/[B(\G),B(\G)])\otimes_{\mathbb{Z}_p}\mathbb{Q}_p \ .
\end{equation*}
The additional point to note is that the ideal $\Jac(B(\G))/p B(\G)$ is nilpotent in $B(\G)/p B(\G)$; in particular, the denominators appearing in the image of this map are bounded.

On the other hand, as $\varphi_\Z(S(\Z))\subseteq S(\Z)$ we obtain a unique extension   $\varphi_\Z:A(\Z) \longrightarrow A(\Z)$ of
$\varphi_\Z$. It respects $\Jac (A(\Z))$ and therefore further extends to a homomorphism of
rings $\varphi_\Z:B(\Z) \longrightarrow B(\Z)$, which modulo $p$ induces   the endomorphism of $ B(\Z)/p
B(\Z)\cong \mathcal{O}/(p)[[\Z]]$  which sends $f\mapsto f^p$. Furthermore, for any subgroup $U \in S(\G,\Z)$ we may use the identification $B(U)/[B(U),B(U)] = B(\Z)\otimes_{\Lambda(\Z)}\mathcal{O}[[\Conj(U)]]$
from Lemma \ref{B-comm}.ii to extend the earlier map $\varphi_U$ to the map
\begin{align*}
    \varphi_U: B(U)/[B(U),B(U)] & \longrightarrow   B(U)/[B(U),B(U)] \\
    z\otimes f & \longmapsto \varphi_\Z(z)\otimes \varphi_U(f) \ .
\end{align*}
If $U$ is abelian then $\varphi_U$
is again a ring endomorphism of $B(U)$ which modulo $p$ coincides with the map $f\mapsto f^p$.

\begin{lemma}\label{units}
The group $B(\G)^\times/ [1+pB(\G)] \cdot B(\Z')^\times$, for any open subgroup $\Z' \subseteq \G_0$, is annihilated by $p^\ell$ for a sufficiently large $\ell \in \mathbb{N}$.
\end{lemma}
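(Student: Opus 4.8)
\medskip
\noindent\textbf{Proof strategy.}
The ring $B(\G)$ is local by Theorem~\ref{skew-Laurent}, and the plan is to work along the two--step filtration $B(\G)^\times \supseteq 1+\Jac(B(\G)) \supseteq 1+pB(\G)$. Since $\pi$ divides $p$, the two--sided ideal $pB(\G)$ lies in $\Jac(B(\G))$, so reduction modulo $pB(\G)$ identifies $B(\G)^\times/[1+pB(\G)]$ with $(B(\G)/pB(\G))^\times$; as $B(\Z')$ is local with $1+pB(\Z')\subseteq 1+pB(\G)$, the group in the statement becomes $(B(\G)/pB(\G))^\times$ modulo the image of $(B(\Z')/pB(\Z'))^\times$. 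It therefore suffices to annihilate, by one fixed power of $p$, each of the two layers of the filtration.

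\emph{The bottom layer $[1+\Jac(B(\G))]/[1+pB(\G)]$.} Here I would use the nilpotency of $\Jac(B(\G))/pB(\G)$ in $B(\G)/pB(\G)$ noted before the lemma: choose $N$ with $\Jac(B(\G))^N\subseteq pB(\G)$ and $\ell$ with $p^\ell\ge N$. For $x\in\Jac(B(\G))$ the expansion $(1+x)^{p^\ell}=1+\sum_{j=1}^{p^\ell}\binom{p^\ell}{j}x^j$ has every intermediate term in $pB(\G)$, because $p\mid\binom{p^\ell}{j}$ for $0<j<p^\ell$, and its last term is $x^{p^\ell}\in\Jac(B(\G))^N\subseteq pB(\G)$; hence $(1+x)^{p^\ell}\in 1+pB(\G)$, so this layer is killed by $p^\ell$.

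\emph{The top layer $k^\times$, where $k:=B(\G)/\Jac(B(\G))$.} Because $\Z'\subseteq\G_0$, the subring $B(\Z')$ is central in $B(\G)$; it is local, one checks that $\Jac(B(\Z'))\subseteq\Jac(B(\G))$, and the inclusion then induces an embedding of fields $E:=B(\Z')/\Jac(B(\Z'))\hookrightarrow k$ under which the image of $B(\Z')^\times$ is exactly $E^\times$. I claim $k/E$ is purely inseparable; granting this, $k^{p^a}\subseteq E$ for a suitable $a$ and so $k^\times/E^\times$ is killed by $p^a$. To see it, Theorem~\ref{skew-Laurent} identifies $k$ with the Laurent series field $(\cO/\pi\cO)((u))$, where $u=(\gamma-1)^{-1}$ in $B(\G)$. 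Let $p^a\mathbb{Z}_p$ be the image of the open subgroup $\Z'$ in $\G/H\cong\mathbb{Z}_p$, and pick a topological generator $\delta_0$ of $\Z'$ modulo its torsion subgroup $\Z'\cap H$ that maps onto $\gamma^{p^a}$ (possible, $\gamma^{p^a}$ being a generator of $p^a\mathbb{Z}_p$). Since $\Z'\cap H$ maps to $1$ in $k$ one gets $\overline{\delta_0}=(1+u^{-1})^{p^a}=1+u^{-p^a}$, whence $\overline{\delta_0-1}=u^{-p^a}$; consequently $E$ maps onto $(\cO/\pi\cO)((u^{p^a}))\subseteq(\cO/\pi\cO)((u))=k$, a purely inseparable extension of degree $p^a$, and the claim follows.

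Putting the layers together: given $x\in B(\G)^\times$, choose a central $z\in B(\Z')^\times$ whose image in $k^\times$ equals that of $x^{p^a}$; then $x^{p^a}z^{-1}\in 1+\Jac(B(\G))$, so $(x^{p^a}z^{-1})^{p^\ell}\in 1+pB(\G)$, and centrality of $z$ gives $(x^{p^a}z^{-1})^{p^\ell}=x^{p^{a+\ell}}z^{-p^\ell}$, whence $x^{p^{a+\ell}}\in[1+pB(\G)]\cdot B(\Z')^\times$. Thus $p^{a+\ell}$ annihilates the quotient. The only non-formal input — and the main obstacle — is the third paragraph: identifying the residue field $k$ and the subfield $E$. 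The delicate point is that one must take the generator $\delta_0$ so that it reduces to $\gamma^{p^a}$ exactly; a generator reducing instead to $\gamma^{p^a w}$ for a genuine (non-integral) $p$-adic unit $w$ would make $E$ a proper subfield of $(\cO/\pi\cO)((u^{p^a}))$ and introduce a tame, separable part in $k/E$, destroying the $p$-power torsion of $k^\times/E^\times$.
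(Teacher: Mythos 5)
Your proof is correct, but it takes a different route from the paper's. Rather than analyzing the filtration $B(\G)^\times \supseteq 1+\Jac(B(\G)) \supseteq 1+pB(\G)$ and working out residue fields, the paper fixes a section $\sigma:\Gamma\to\G$ of $\G\twoheadrightarrow\G/H$, writes
\[
B(\G)^\times=[1+\Jac(B(\G))]\cdot B(\sigma(\Gamma))^\times,
\]
kills the first factor by the same nilpotence argument you give, and then kills the second by applying the Frobenius endomorphism $\varphi_{\sigma(\Gamma)}^{\ell}$: once $\ell$ is large enough that $\sigma(\Gamma)^{p^\ell}\subseteq\Z'$ one has $\varphi_{\sigma(\Gamma)}^{\ell}(B(\sigma(\Gamma))^\times)\subseteq B(\Z')^\times$, while $\varphi_{\sigma(\Gamma)}^{\ell}(x)\equiv x^{p^\ell}\pmod{1+pB(\sigma(\Gamma))}$. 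Your identification of the residue field extension $E\hookrightarrow k$ as purely inseparable of degree $p^a$ is the residual shadow of exactly this Frobenius computation; the paper's version operates directly with $\varphi$ on the ring and never has to name $k$ or $E$. What you gain is an explicit exponent $p^{a+\ell}$ and a clean way (via centrality of $z\in B(\Z')^\times$) to combine the two layers, a point the paper passes over. Two small remarks. The convergence condition in the Laurent description of $B(\G)$ has to be read as $a_i\to 0$ as $i\to-\infty$ (that is consistent with the displayed formula $at^{-1}=\sum_{i<0}t^i\sigma\delta^{-i-1}(a)$ and with $\Lambda(H)[[t]]\subseteq B(\G)$), so the uniformizer of $k$ is $t=\gamma-1$ rather than $(\gamma-1)^{-1}$ and $E$ maps onto $(\cO/\pi)((t^{p^a}))$; this is purely cosmetic and does not affect the argument. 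And the worry in your last sentence is unfounded: even for a generator $\delta_0$ projecting to $\gamma^{p^aw}$ with $w\in\mathbb{Z}_p^\times$, its image in $k$ is $(1+t^{p^a})^w=\sum_k\binom{w}{k}t^{kp^a}$, so $\overline{\delta_0-1}$ is again a uniformizer of $(\cO/\pi)((t^{p^a}))$ (leading coefficient $\overline{w}\neq0$) and $E$ still maps onto all of it — no tame separable part appears. Finally, you silently use $\Jac(B(\Z'))\subseteq\Jac(B(\G))$; this holds because $B(\G)$ is module-finite over the central local noetherian ring $B(\Z')$, so every simple $B(\G)$-module is killed by $\Jac(B(\Z'))$ by Nakayama, but it deserves a word.
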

\begin{proof}
Let $I_B := \ker (B(\G) \longrightarrow B(\Gamma))$. We also fix a section $\sigma : \Gamma \rightarrow \G$ of the projection map $\G \rightarrow \Gamma$. It induces a section $B(\Gamma) \xrightarrow{\; \cong \;} B(\sigma(\Gamma)) \subseteq B(\G)$ of the ring homomorphism $B(\G) \longrightarrow B(\Gamma)$. Hence
\begin{equation}\label{f:units}
    B(\G)^\times = [1+I_B] \cdot B(\sigma(\Gamma))^\times = [1+\Jac(B(\G))] \cdot B(\sigma(\Gamma))^\times \ .
\end{equation}
If $\Jac(B(\G))^{p^\ell} \subseteq pB(\G)$ then $[1+\Jac(B(\G))]^{p^\ell} \subseteq 1+pB(\G)$. On the other hand, the subgroup $\sigma(\Gamma) \cap \Z'$ is open and hence contains $\sigma(\Gamma)^{p^\ell}$ for some sufficiently large
$\ell \in \mathbb{N}$. It follows that $\varphi_{\sigma(\Gamma)}^\ell(\sigma(\Gamma)) \subseteq \Z'$ and therefore that
\begin{equation*}
    \varphi_{\sigma(\Gamma)}^\ell(B(\sigma(\Gamma))^\times) \subseteq B(\Z')^\times \ .
\end{equation*}

\pagebreak

\noindent
Since $ \varphi_{\sigma(\Gamma)}^\ell (x) \equiv x^{p^\ell} \bmod 1+pB(\sigma(\Gamma))$ for any $x \in B(\sigma(\Gamma))^\times$ we conclude that
\begin{equation*}
    B(\sigma(\Gamma))^{\times p^\ell} \subseteq [1+pB(\sigma(\Gamma))] \cdot B(\Z')^\times \subseteq [1+pB(\G)] \cdot B(\Z')^\times \ .
\end{equation*}
\end{proof}

We now define the homomorphism
\begin{align*}
   [1+\Jac(B(\G))] B(\G_0)^\times & \longrightarrow (B(\G)/[B(\G),B(\G)]) \otimes_{\mathbb{Z}_p} \mathbb{Q}_p \\
    x & \longmapsto
    \begin{cases}
    \log(x)-\tfrac{1}{p} \varphi_\G(\log(x)) & \textrm{if $x \in 1+\Jac(B(\G))$}, \\
    \tfrac{1}{p}\log(\tfrac{x^p}{\varphi_{\G_0} (x)}) & \textrm{if $x \in B(\G_0)^\times$}.
    \end{cases}
\end{align*}
This is well defined since:
\begin{itemize}
  \item[--] $\tfrac{x^p}{\varphi_{\G_0} (x)} \in 1+pB(\G_0)$ for $x \in B(\G_0)^\times$.
  \item[--] $\tfrac{1}{p}\log(\tfrac{x^p}{\varphi_{\G_0} (x)}) = \log(x)-\tfrac{1}{p} \varphi_\G(\log(x))$ for $x \in [1+\Jac(B(\G))] \cap B(\G_0)^\times$.
\end{itemize}
As a consequence of Lemma \ref{units} and the unique divisibility of the target this map extends uniquely to $B(\G)^\times$ and induces a natural homomorphism
\begin{equation*}
    L_B := L_{B(\G)} : K_1(B(\G)) \longrightarrow (B(\G)/[B(\G),B(\G)]) \otimes_{\mathbb{Z}_p} \mathbb{Q}_p \ .
\end{equation*}

\begin{proposition}\label{integral-LB}
$L_B$ has image in $B(\G)/[B(\G),B(\G)]$.
\end{proposition}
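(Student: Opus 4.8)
The plan is to reduce to two classes of units and then, for the essential one, to transplant the Oliver--Taylor proof of the integrality of the classical integral logarithm, now working over the base ring $B(\G_0)$ in place of $\cO$.

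Since $B(\G)$ is local, $B(\G)^\times$ surjects onto $K_1(B(\G))$, so it suffices to show $L_B(x) \in B(\G)/[B(\G),B(\G)]$ for every $x \in B(\G)^\times$; here I use that $B(\G)/[B(\G),B(\G)]$ is $\mathbb Z_p$-torsion-free (by Lemma \ref{B-comm}.ii it is free over $B(\G_0)$, which is $\mathbb Z_p$-flat), so it genuinely sits inside the rationalized target. As $L_B$ is a homomorphism, formula \eqref{f:units} reduces us to the cases $x \in B(\sigma(\Gamma))^\times$ and $x \in 1+\Jac(B(\G))$. The first case is elementary: $B(\sigma(\Gamma))^\times = \mu(\cO) \times (1+pB(\sigma(\Gamma)))$; on $\mu(\cO)$ one has $\varphi_{\G_0}(\zeta)=\zeta^p$ (Frobenius of a root of unity in an unramified extension), so $L_B(\zeta)=\tfrac1p\log(1)=0$; and $1+pB(\sigma(\Gamma)) \subseteq 1+pB(\G)$. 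The same estimate disposes of $1+pB(\G)$ itself: for $x \in 1+pB(\G)$ one has $\log(x) \in p\big(B(\G)/[B(\G),B(\G)]\big)$ since $v_p(p^n/n)\geq 1$, and then $\tfrac1p\varphi_\G(\log x)$ is integral because $\varphi_\G$ is $\mathbb Z_p$-linear, hence maps $p\big(B(\G)/[B(\G),B(\G)]\big)$ into itself.

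The substance is the case $x \in 1+\Jac(B(\G))$, and the heart of it is the Frobenius-type congruence
\begin{equation*}
    \overline{b^p}\equiv\varphi_\G(\overline b)\pmod{p\,\big(B(\G)/[B(\G),B(\G)]\big)}\qquad\text{for all }b\in B(\G),
\end{equation*}
where $\overline{\,\cdot\,}$ denotes the image in $B(\G)/[B(\G),B(\G)]$. I would deduce it from two ingredients. First, the cyclic-rearrangement identity $(b_1+\cdots+b_m)^p\equiv b_1^p+\cdots+b_m^p$ modulo $pB(\G)+[B(\G),B(\G)]$, valid in any ring: among the $p^m$ length-$p$ words in the $b_i$, the non-constant ones split into free cyclic orbits of size $p$, and within an orbit consecutive words differ by an additive commutator, so the orbit sum is congruent to $p$ times one of its terms. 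Expanding $b$ over a transversal of $\G_0$ in $\G$ — which by Prop.\ \ref{finitefree} is a $B(\G_0)$-basis of $B(\G)$, $\G_0$ being central — reduces the congruence to monomials $\lambda\tilde g$ with $\lambda\in B(\G_0)$, for which $(\lambda\tilde g)^p=\lambda^p\tilde g^p$; second, $\varphi_{\G_0}\equiv(\cdot)^p\bmod pB(\G_0)$ together with $\varphi_\G([\tilde g]_\G)=[\tilde g^p]_\G$ identifies $\overline{\lambda^p\tilde g^p}$ with $\varphi_\G(\overline{\lambda\tilde g})$ modulo $p$. Granting this, one then propagates it through the logarithm series to obtain $\varphi_\G(\log x)\equiv\log(x^p)\pmod{p\,(B(\G)/[B(\G),B(\G)])}$; since $\log$ is additive on $1+\Jac(B(\G))$ modulo commutators, $\log(x^p)=p\log x$, so this yields $\tfrac1p\varphi_\G(\log x)\equiv\log x$ modulo $B(\G)/[B(\G),B(\G)]$, i.e.\ $L_B(x)=\log x-\tfrac1p\varphi_\G(\log x)$ is integral.

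The step I expect to be the real obstacle is this last propagation. The series $\log x=\sum_n\tfrac{(-1)^{n+1}}{n}\overline{(x-1)^n}$ carries $p$-power denominators, so applying the Frobenius congruence term by term does not by itself suffice: one must show that the denominators cancel, exactly the delicate point already present in the classical theorem. The tools are the nilpotence of $\Jac(B(\G))/pB(\G)$ (equivalently $\Jac(B(\G))^N\subseteq pB(\G)$ for some $N$, which holds because $H$ is finite), bounding the denominators and guaranteeing $p$-adic convergence, together with the compatibility of $\varphi_\G$ with the $p$-adic and $\Jac(B(\G))$-adic filtrations; one organizes the cancellation by an induction along these filtrations, as in \cite{Oli} and the proof of \cite{OT} Thm.\ 1.4. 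Conceptually, $B(\G_0)$ being a complete local $\mathbb Z_p$-flat ring carrying the Frobenius lift $\varphi_{\G_0}$, and $B(\G)$ being a module-finite, $p$-adically complete $B(\G_0)$-algebra with $\Jac(B(\G))/pB(\G)$ nilpotent, the ring $B(\G)$ shares the formal features of a $p$-adic group ring $\cO[G]$, so Oliver's argument carries over with essentially only notational changes.
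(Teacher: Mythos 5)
Your overall strategy matches the paper's: you reduce via \eqref{f:units} to the two classes $B(\sigma(\Gamma))^\times$ and $1+\Jac(B(\G))$, observe that $B(\G)/[B(\G),B(\G)]$ is $p$-torsion-free so the statement makes sense, and for the principal-unit class you appeal to the Oliver--Taylor method built on the Frobenius congruence $\overline{b^p}\equiv\varphi_\G(\overline b)\pmod{p}$. The paper likewise defers the $1+\Jac(B(\G))$ case to ``arguments completely analogous'' to Oliver--Taylor; your sketch (cyclic rearrangement modulo $pB(\G)+[B(\G),B(\G)]$, reduction to monomials $\lambda\tilde g$ with $\lambda\in B(\G_0)$ over a transversal of $\G_0$, control of denominators via nilpotence of $\Jac(B(\G))/pB(\G)$) is the right outline and you correctly flag where the genuine work lies.

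The place where your proof breaks is the treatment of $B(\sigma(\Gamma))^\times$. The decomposition $B(\sigma(\Gamma))^\times=\mu(\cO)\times\bigl(1+pB(\sigma(\Gamma))\bigr)$ is false. By the discussion after Prop.\ \ref{finitefree}, $B(\sigma(\Gamma))$ is the $p$-adic completion of the localization of $\cO[[t]]$ at the prime $(\pi)=(p)$, a complete discrete valuation ring whose residue field is the Laurent series field $k((t))$ over the residue field $k$ of $\cO$. Hence $B(\sigma(\Gamma))^\times/\bigl(1+pB(\sigma(\Gamma))\bigr)\cong k((t))^\times$, which is much larger than $\mu(\cO)\cong k^\times$. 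Concretely, $\gamma-1$ lies in the Ore set and maps to $t$, so it is a unit of $B(\sigma(\Gamma))$ not of the form root of unity times principal unit, and your case analysis does not reach it. The repair is short and is exactly the paper's argument: $B(\sigma(\Gamma))$ is commutative and $\varphi_{\sigma(\Gamma)}$ reduces modulo $p$ to the $p$-power Frobenius of $B(\sigma(\Gamma))/pB(\sigma(\Gamma))$, so for \emph{every} $x\in B(\sigma(\Gamma))^\times$ one has $x^p/\varphi_{\sigma(\Gamma)}(x)\in 1+pB(\sigma(\Gamma))$; since $p\ne 2$, this gives $L_B(x)=\tfrac1p\log\bigl(x^p/\varphi_{\sigma(\Gamma)}(x)\bigr)\in\tfrac1p\log\bigl(1+pB(\sigma(\Gamma))\bigr)\subseteq B(\sigma(\Gamma))$, with no structure theorem for the unit group required.
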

\begin{proof} First of all we note that $B(\G)/[B(\G),B(\G)]$ by Lemma \ref{B-comm}.ii  is $p$-torsion free so that the statement makes sense.

For $x \in B(\sigma(\Gamma))^\times$ we have that $L_B(x)$ is the image in $B(\G)/[B(\G),B(\G)]$ of
\begin{equation*}
    \tfrac{1}{p}\log(\tfrac{x^p}{\varphi_{\sigma(\Gamma)} (x)}) \in \tfrac{1}{p} \log(1+pB(\sigma(\Gamma))) \subseteq B(\sigma(\Gamma)) \ .
\end{equation*}
Therefore \eqref{f:units} reduces us to proving that
\begin{equation*}
    \log(x)-\tfrac{1}{p} \varphi_\G(\log(x)) \in B(\G)/[B(\G),B(\G)] \qquad\textrm{for any $x \in 1+\Jac(B(\G))$}.
\end{equation*}
This is done by arguments completely analogous to the case of the integral logarithm of Oliver and Taylor (cf.\ \cite{Oli} Chap.\ 6 or \cite{CR} \S54).
\end{proof}

\begin{lemma}\label{L-L}
If $i : \Lambda(\G) \rightarrow B(\G)$ denotes the inclusion map then the diagram
\begin{equation*}
    \xymatrix{
      K_1(\Lambda(\G)) \ar[d]_{L_\G} \ar[r]^{K_1(i)} & K_1(B(\G)) \ar[d]^{L_{B(\G)}} \\
      \cO[[\Conj(\G)]] \ar[r]^-{i} & B(\G)/[B(\G),B(\G)]   }
\end{equation*}
commutes.
\end{lemma}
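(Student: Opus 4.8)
The plan is to reduce the identity first to units of $\Lambda(\G)$ and then to abelian subquotients, where the integral logarithm is transparent. Both composites $i\circ L_\G$ and $L_{B(\G)}\circ K_1(i)$ are group homomorphisms $K_1(\Lambda(\G))\to B(\G)/[B(\G),B(\G)]$, the second one indeed landing in $B(\G)/[B(\G),B(\G)]$ by Proposition~\ref{integral-LB}. Since $\G$ is pro-$p$ (assumption \eqref{f:H2}) the ring $\Lambda(\G)$ is local, so $\Lambda(\G)^\times\twoheadrightarrow K_1(\Lambda(\G))$, and it suffices to prove $i(L_\G(x))=L_{B(\G)}(K_1(i)(x))$ for $x\in\Lambda(\G)^\times$.

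To pass to the abelian setting I would fix an open central subgroup $\Z\subseteq\G$ (possible by \eqref{f:H1}) and use that $\beta_B=(\beta_{B,U})_U$ is injective by Theorem~\ref{iso-B-additive}; it is then enough to check, for each $U\in S(\G,\Z)$, that $\beta_{B,U}(i(L_\G(x)))=\beta_{B,U}(L_{B(\G)}(K_1(i)(x)))$. On the left, under the identifications of Proposition~\ref{finitefree} and Lemma~\ref{B-comm}.ii the map $\beta_{B,U}\circ i$ is the map induced by the canonical inclusion $j\colon\Lambda(U^{\ab})\hookrightarrow B(U^{\ab})$ applied to $\beta_U$; and by Proposition~\ref{beta-L} together with Lemmas~\ref{beta} and~\ref{log}, $\beta_U(L_\G(x))=\mathcal{L}_U(\theta(x))$ is given by an explicit formula in the components $\theta_U(x),\theta_\Z(x),\theta_W(x)$ ($W\in P(U)$) built out of $\log$ applied to elements of $1+p\Lambda(U^{\ab})$, the operators $\varphi_\bullet$, and the maps $\eta_\bullet$ (recall the text's observation that the argument of each logarithm occurring in $\mathcal{L}_U$ lies in $1+p\Lambda(U^{\ab})$).

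For the right hand side I would use that the norm maps $N^\G_U$ are compatible with the flat base change $\Lambda(\G)\to B(\G)$ (Proposition~\ref{finitefree}, Theorem~\ref{skew-Laurent}.iii), so that the relevant $B$-component of $K_1(i)(x)$ is $j$ applied to $\theta_U(x)$, and that the statements \eqref{d:trlogN}, Lemma~\ref{beta} and Lemma~\ref{log} hold verbatim over $B$ — either by rerunning the Oliver--Taylor argument over $B(\G)$ (whose logarithm has bounded denominators, as $\Jac(B(\G))/pB(\G)$ is nilpotent) or, for the abelian pieces, by $B(\Z)$-linear extension — so that $\beta_{B,U}(L_{B(\G)}(K_1(i)(x)))$ is given by the same formula in the $B$-components. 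It then remains to note that $j$ is a continuous ring homomorphism which commutes with $\varphi_\bullet$ (immediate from the construction $z\otimes f\mapsto\varphi_\Z(z)\otimes\varphi_U(f)$, whence $\varphi_\bullet\circ i=i\circ\varphi_\bullet$), with $\eta_\bullet$ and $\alpha_\bullet$, and with $\log$ restricted to $1+p\Lambda(U^{\ab})$: here one uses $p\neq2$ (assumption \eqref{f:H4}) to see that the logarithm series applied to an element of $1+p\Lambda(U^{\ab})$ converges $p$-adically already inside $\Lambda(U^{\ab})$, so that no passage to $\Lambda^\infty$ is needed and $j$ visibly carries it to the logarithm of the corresponding element of $1+pB(U^{\ab})$. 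Matching the two formulas and invoking injectivity of $\beta_B$ gives the claim.

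The main obstacle is the assertion in the previous paragraph that the logarithm/trace/norm compatibility \eqref{d:trlogN} and the computation of $\varphi_\G$ on the $\beta_U$ (Lemma~\ref{beta}) transfer to $B(\G)$; this is the one place where one must reconcile the $\Lambda^\infty$-valued logarithm used on the $\Lambda$-side with the logarithm on $1+\Jac(B(\G))$ valued in $(B(\G)/[B(\G),B(\G)])\otimes_{\mathbb{Z}_p}\mathbb{Q}_p$. What makes this harmless is precisely the reduction to abelian $U$: after it, every logarithm in sight is applied to an element of $1+p\Lambda(U^{\ab})$, where the uncompleted series and the $B$-completed series converge to elements that are identified by the inclusion $j\colon\Lambda(U^{\ab})\hookrightarrow B(U^{\ab})$. (Alternatively, one could argue directly by reducing the identity to the finite quotients $\cO[\Conj(\G/N)]$, using $\cO[[\Conj(\G)]]=\varprojlim_N\cO[\Conj(\G/N)]$ and the corresponding presentation of $B(\G)/[B(\G),B(\G)]$ from Lemma~\ref{B-comm}.ii, but the route through abelian subquotients seems cleaner.)
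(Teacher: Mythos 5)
Your argument is correct in substance but takes a genuinely different and much heavier route from the paper's, and it relies on several results that the paper only establishes later. The paper's proof is short and elementary: with $I := \ker(\Lambda(\G)\to\Lambda(\Gamma))$, the argument of Lemma~\ref{units} shows that $\Lambda(\G)^\times/[1+I]\cdot\Lambda(\G_0)^\times$ is annihilated by some $p^\ell$; since the target is $p$-torsion free, it therefore suffices to check the identity on $1+I$ and on $\Lambda(\G_0)^\times$. On $1+I$ both $L_{B(\G)}$ and $i\circ L_\G$ are literally given by the same defining formula $\log - \tfrac{1}{p}\varphi_\G\log$, compatibly under $i$ ``by the very definitions,'' and on $\Lambda(\G_0)^\times$ one is in the abelian case where $L_{B(\G_0)}|\Lambda(\G_0) = L_{\G_0}$. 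Your proof, by contrast, passes through the injectivity of $\beta_B$ (Theorem~\ref{iso-B-additive}), Proposition~\ref{beta-L}, and --- crucially --- the $B$-analog of Proposition~\ref{beta-L}, namely the commutativity of diagram~\eqref{d:comm-L-L-B}, whose proof in Section~\ref{sec:multiplicative-B} relies on Lemma~\ref{logB} and Remark~\ref{betaB} (the $B$-analogs of~\eqref{d:trlogN}, Lemma~\ref{beta}, and Lemma~\ref{log}). This is not circular, since those later results do not use the present lemma, so your argument would stand if the sections were rearranged; but note one imprecision: your claim that the reduction to abelian $U$ makes the transfer of~\eqref{d:trlogN} to $B$ ``harmless'' undersells the work, since Lemma~\ref{logB}.i concerns $\tr^V_U$ for arbitrary open subgroups and its proof requires the Oliver--Taylor adaptation. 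Passing to $B(U^{\ab})$ helps only at the final step of comparing the explicit $\mathcal{L}$- and $\mathcal{L}_B$-formulas, not in establishing the $B$-version of the trace/log compatibility that you need in order to express $\beta_{B,U}\circ L_{B(\G)}$ in the first place. The paper's direct argument avoids all of this and is self-contained within Section~\ref{sec:log-B}.
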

\begin{proof}
If $I := \ker(\Lambda(\G) \longrightarrow \Lambda(\Gamma))$ then we have, as in the proof of Lemma \ref{units}, that $\Lambda(\G)^\times/ [1+I] \cdot \Lambda(\G_0)^\times$ is annihilated by some $p^\ell$. But
\begin{equation*}
    L_{B(\G)} \big| [1+I] = i \circ L_\G \big| [1+I]
\end{equation*}
by the very definitions and
\begin{equation*}
    L_{B(\G_0)} \big| \Lambda(\G_0) = L_{\G_0}
\end{equation*}
since $\G_0$ is commutative.
\end{proof}

Next we define
\begin{gather*}
    SK_1(A(\G)) := \ \textrm{image of $SK_1(\Lambda(\G))$ in $K_1(A(\G))$}, \\
    SK_1(B(\G)) := \ \textrm{image of $SK_1(\Lambda(\G))$ in $K_1(B(\G))$}
\end{gather*}
and
\begin{gather*}
    K'_1(A(\G)) := K_1(A(\G))/SK_1(A(\G)), \\
    K'_1(B(\G)) := K_1(B(\G))/SK_1(B(\G)).
\end{gather*}
Lemma \ref{L-L} implies that $L_{B(\G)}$ induces a homomorphism
\begin{equation*}
    L_B = L_{B(\G)} : K'_1(B(\G)) \longrightarrow B(\G)/[B(\G),B(\G)] \ .
\end{equation*}

\section{The multiplicative theory, part 2}\label{sec:multiplicative-B}

Throughout this section we assume \eqref{f:H1} - \eqref{f:H4} and, in fact, that $H$ is finite. We choose $\Z$ small enough so that it is torsion free (i.\ e., $\Z \cong \mathbb{Z}_p$). We shall study the analogs
\begin{align*}
    \theta_A := (\theta^\G_\Z)_A : K_1(A(\G)) & \longrightarrow \prod_{U \in S(\G,\Z)} A(U^{\ab})^\times
\end{align*}
and
\begin{align*}
    \theta_B := (\theta^\G_\Z)_B : K_1(B(\G)) & \longrightarrow \prod_{U \in S(\G,\Z)} B(U^{\ab})^\times
\end{align*}
of $\theta$, where we abbreviate
\begin{equation*}
    A:=A(\G):=\Lambda(\G)_{S(\G)}
\end{equation*}
and as before
\begin{equation*}
    B:=B(\G):=\widehat{\Lambda(\G)_S} \ .
\end{equation*}
Since by Prop.\ \ref{finitefree} we have
\begin{equation*}
    A(\cdot)=A(\Z)\otimes_{\Lambda(\Z)}\Lambda(\cdot) \qquad\text{and}\qquad B(\cdot)=B(\Z)\otimes_{\Lambda(\Z)}\Lambda(\cdot)
\end{equation*}
the norm maps $N^V_U $ extend naturally and one immediately checks that the analogs
\begin{equation*}
    (\theta_A)_U : K_1(A(\G)) \xrightarrow{\; N^\G_U \;} K_1(A(U)) \longrightarrow K_1(A(U^{\ab})) = A(U^{\ab})^\times
\end{equation*}
and similarly $(\theta_B)_U$  of $\theta_U$ are defined and induce a commutative diagram
\begin{equation*}
    \xymatrix{
  K_1(\Lambda(\G) \ar[d]_{} \ar[r]^{\theta_U} & {\Lambda(U^{\ab})^\times} \ar@{^{(}->}[d] \\
  K_1(A(\G)) \ar[d]_{ } \ar[r]^{(\theta_A)_U} & A( U^{\ab})^\times\ar@{^{(}->}[d]  \\
  K_1(B(\G)) \ar[r]^{(\theta_B)_U} & B( U^{\ab})^\times
 }
\end{equation*}
which implies an analogous diagram for $\theta$, $\theta_A$, and $\theta_B$.

Let $U \subseteq V$ be subgroups in $S(\G,\Z)$. As
$\Z\subseteq U^{\ab}$ we have $A(U^{\ab}) = A(\Z) \otimes_{\Lambda(\Z)}\Lambda(U^{\ab})$ and similarly for $B(U^{\ab})$. It follows easily that the maps
$\pi^V_U$, $\nu^V_U$, and $\sigma^V_U$ extend naturally to the rings $A(\cdot)$ and $B(\cdot)$. In particular, $\alpha_\Z$ extends
to a map $\alpha_\Z: B(\Z)^\times \to 1+pB(\Z)\subseteq B(\Z)^\times$ sending again $f$ to
$\frac{f^p}{\varphi_\Z(f)}$. If $U\in C(\G,\Z)$ differs from $\Z$ then the reasoning in the proof of \cite{SV2} Prop.\ 2.3 still works and shows that  the map
\begin{align*}
    \alpha_U:B(U)^\times & \longrightarrow B(U)^\times \\
    f & \longmapsto \frac{f^p}{N^U_{U'}(f)} \ ,
\end{align*}
which extends the earlier such map, has image in $1+pB(U)$.

If $[V:U] = p^n$ then the Verlagerung satisfies
\begin{equation*}
    \ver^V_U(z \bar{g}) = z^{p^n} \ver^V_U(\bar{g}) = \varphi_\Z^n(z) \ver^V_U(\bar{g})
\end{equation*}
for any $z \in \Z$ and $\bar{g} \in U^{\ab}$. Hence we may extend the earlier ring homomorphism $\ver^V_U : \Lambda(V^{\ab}) \longrightarrow \Lambda(U^{\ab})$ to the ring homomorphism
\begin{align*}
    B(V^{\ab}) = B(\Z) \otimes_{\Lambda(\Z)}\Lambda(V^{\ab}) & \longrightarrow B(U^{\ab}) = B(\Z) \otimes_{\Lambda(\Z)}\Lambda(U^{\ab}) \\
    z \otimes f & \longmapsto \varphi_\Z^n(z) \otimes \ver^V_U(f) \ .
\end{align*}
It maps $A(V^{\ab})$ into $A(U^{\ab})$.

We now may define the subsets
\begin{equation*}
    \Phi_A=(\Phi^\G_\Z)_A\subseteq \prod_{U\in S(\G,\Z)} A(U^{\ab})^\times
\end{equation*}
and
\begin{equation*}
    \Phi_B=(\Phi^\G_\Z)_B\subseteq \prod_{U\in S(\G,\Z)} B(U^{\ab})^\times
\end{equation*}
by the corresponding conditions \eqref{f:M1} - \eqref{f:M4}, and we obtain the obvious inclusions
\begin{equation*}
    \Phi\subseteq\Phi_A\subseteq \Phi_B \ .
\end{equation*}
For the purpose of   proving the main conjecture the following result is essential.

\begin{theorem}\label{imagethetaA}
\begin{enumerate}
\item[i.] The image of $\theta_A$ is contained in $\Phi_A$.
\item[ii.] Within $\prod_{U\in S(\G,\Z)} A(U^{\ab})^\times$ we have
    \begin{equation*}
        \Phi_A\cap \prod_{U\in S(\G,\Z)} \Lambda(U^{\ab})^\times =\Phi=\im(\theta) \ .
    \end{equation*}
\end{enumerate}
\end{theorem}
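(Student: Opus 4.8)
The plan is to reduce both parts to the $\Lambda$-theory of Section~\ref{sec:multiplicative} and the $B$-integral logarithm of Section~\ref{sec:log-B}. For i.\ I would verify the four conditions \eqref{f:M1}--\eqref{f:M4} for an arbitrary $\theta_A(x)$, $x\in K_1(A(\G))$. Conditions \eqref{f:M1} and \eqref{f:M2} carry over verbatim from Section~\ref{sec:multiplicative}: they rest only on transitivity of norm maps, conjugation functoriality, and base-change identities of the shape $A(U/[V,V])\otimes_{A(U)}A(V)=A(V^{\ab})$, all of which follow from Prop.~\ref{finitefree}. Likewise \eqref{f:M3} is obtained by repeating the determinant/Leibniz computation of Section~\ref{sec:multiplicative} with the decomposition $A(V)\cong\bigoplus_{i=0}^{p-1}A(U)g^i$, using that $\im(\sigma^V_U)$ is an ideal in $A(U^{\ab})^{W(V)}$ and that the extended $\varphi_\Z\colon A(\Z)\to A(\Z)$ is a ring endomorphism with $\varphi_\Z(a)\equiv a^p\bmod pA(\Z)$ (which holds since $A(\Z)/pA(\Z)=B(\Z)/pB(\Z)$).

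The one condition requiring genuinely new input is \eqref{f:M4}. Here I would push $x$ to $x'\in K_1(B(\G))$ and first prove \eqref{f:M4} for $\theta_B(x')$ by the $B$-analogue of the argument in Section~\ref{sec:multiplicative}: extend $L$ to $L_B$ via Section~\ref{sec:log-B} (its image is integral by Prop.~\ref{integral-LB}); establish the $B$-analogues of diagram \eqref{d:trlogN} and of Lemmas~\ref{log}, \ref{beta} and \ref{add-mult} by base change along the flat map $\Lambda(\Z)\to B(\Z)$, using Lemma~\ref{B-comm}.ii; deduce the $B$-analogue of Prop.~\ref{beta-L}; and invoke Thm.~\ref{iso-B-additive} (so that $\beta_B\circ L_B$ lands in $\Psi_B$) to conclude $\mathcal{L}_{B,V}(\theta_B(x'))\in\im(\sigma_V)_B$, and hence that $\log$ of $\alpha_V/\prod_{W}\varphi_W\circ\alpha_W$ evaluated on $\theta_B(x')$ lies in $p\,\im(\sigma_V)_B$. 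Since the left-hand side of \eqref{f:M4} for $\theta_A(x)$ visibly lies in $A(V^{\ab})$, the statement for $\theta_A$ then follows from the descent identity $p\,\im(\sigma_V)_B\cap A(V^{\ab})=p\,\im(\sigma_V)_A$, valid because $B(\Z)$ is faithfully flat over the discrete valuation ring $A(\Z)$ and the finitely generated $A(\Z)$-module $A(V^{\ab})/p\,\im(\sigma_V)_A$ is $\mathfrak m_{A(\Z)}$-adically separated (Krull), hence embeds into its completion.

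For ii., the equality $\Phi=\im(\theta)$ is Thm.~\ref{iso-multiplicative}, and $\Phi\subseteq\Phi_A\cap\prod_U\Lambda(U^{\ab})^\times$ is immediate since the maps occurring in \eqref{f:M1}--\eqref{f:M4} are compatible with $\Lambda(\cdot)\hookrightarrow A(\cdot)$ and $\im(\sigma)_\Lambda\subseteq\im(\sigma)_A$. The content is the reverse inclusion: given $(x_U)_U\in\Phi_A$ with every $x_U$ integral, I would check that it satisfies \eqref{f:M1}--\eqref{f:M4} over $\Lambda$. Conditions \eqref{f:M1} and \eqref{f:M2} are equalities with integral sides, so they descend from $A$ to $\Lambda$. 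For \eqref{f:M3} and \eqref{f:M4} the relevant elements $\ver^V_U(x_V)-x_U$ and $\alpha_U(x_U)-\prod_W\varphi_W(\alpha_W(x_W))$ are manifestly integral, so it suffices to know $\im(\sigma^V_U)_A\cap\Lambda(U^{\ab})=\im(\sigma^V_U)_\Lambda$ and $p\,\im(\sigma_U)_A\cap\Lambda(U)=p\,\im(\sigma_U)_\Lambda$; since $A(\Z)$ is a localization of $\Lambda(\Z)$, these reduce to the absence of $S$-torsion in $\Lambda(U^{\ab})/\im(\sigma^V_U)$ and $\Lambda(U)/p\,\im(\sigma_U)$, where $S=\Lambda(\Z)\setminus p\Lambda(\Z)$. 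This last point I would settle by choosing the $\Lambda(\Z)$-bases $U^{\ab}/\Z$, $U/\Z$ (legitimate because $[U,U]\subseteq[\G,\G]\subseteq H$ is finite and $\Z\cong\mathbb{Z}_p$, so $\Z\hookrightarrow U^{\ab}$): in these bases $\sigma^V_U$ and $\sigma_U$ are represented by matrices over $\mathbb Z$ (sums of permutation matrices for the conjugation actions of the finite $p$-groups $V/U$ and $W(U)$), so the quotients in question have the form $\Lambda(\Z)^{r}\oplus\bigoplus_i(\mathcal{O}/p^{m_i})[[t]]$, and each summand $(\mathcal{O}/p^m)[[t]]\cong\Lambda(\Z)/p^m$ is Cohen--Macaulay with the single associated prime $p\Lambda(\Z)/p^m$, whose zero divisors are precisely the $p$-divisible elements, none of which lie in the image of $S$.

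The hard part is the construction in i.\ of the $B$-version of the integral-logarithm picture (the $B$-analogue of Prop.~\ref{beta-L}) together with the descent of \eqref{f:M4} from $B(\G)$ back to $A(\G)$; everything else is a transcription of the $\Lambda$-arguments plus the elementary localization facts used in ii.
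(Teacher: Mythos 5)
Your overall strategy coincides with the paper's: check \eqref{f:M1}--\eqref{f:M3} directly for $\theta_A$, obtain \eqref{f:M4} by passing to $B(\G)$ where the integral logarithm $L_B$ is available, and come back by a faithfully-flat descent. In fact the descent identity you invoke, $p\,\im(\sigma_V)_B\cap A(V^{\ab})=p\,\im(\sigma_V)_A$, justified by Krull's intersection theorem for the complete DVR $B(\Z)=\widehat{A(\Z)}$, is exactly the content of Theorem~\ref{imagethetaB}.ii (which the paper labels ``trivial''), and your proposed chain of $B$-analogues of \eqref{d:trlogN}, Lemmas~\ref{log}, \ref{beta}, \ref{add-mult}, and Prop.~\ref{beta-L}, together with Theorem~\ref{iso-B-additive}, reproduces the paper's Lemma~\ref{logB}, Remark~\ref{betaB}, and diagram \eqref{d:comm-L-L-B}. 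So for part~i.\ you have reconstructed the proof.

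For part~ii.\ the paper says the equality ``follows trivially'' from Theorem~\ref{iso-multiplicative} and the definitions, whereas you correctly identify that some content is hidden: for the inclusion $\Phi_A\cap\prod_U\Lambda(U^{\ab})^\times\subseteq\Phi$ one must know that $\im(\sigma^V_U)_A\cap\Lambda(U^{\ab})=\im(\sigma^V_U)_\Lambda$ and $p\,\im(\sigma_U)_A\cap\Lambda(U)=p\,\im(\sigma_U)_\Lambda$, i.e., that the relevant cokernels have no $(\Lambda(\Z)\setminus p\Lambda(\Z))$-torsion. Your intended proof is essentially right, but you assert without justification that the matrices of $\sigma^V_U$ and $\sigma_U$ in a $\Lambda(\Z)$-basis indexed by $U^{\ab}/\Z$ are sums of \emph{permutation} matrices, hence have entries in $\mathbb{Z}$. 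A priori conjugation by $g$ only gives $gh_jg^{-1}=z_jh_{\pi(j)}$ with $z_j\in\Z$, so the matrices are monomial with group-ring entries rather than permutation matrices; one must first choose representatives for $U^{\ab}/\Z$ that are permuted on the nose by the acting finite $p$-group. This can be arranged --- for a fixed point $\bar h$ the obstruction is a homomorphism from the finite $p$-group stabilizer to $\Z\cong\mathbb{Z}_p$, which vanishes, and one then extends equivariantly along orbits --- but this step is essential and should be stated; without it the Smith normal form argument over $\mathbb{Z}$, and hence the decomposition $\Lambda(\Z)^r\oplus\bigoplus_i(\mathcal{O}/p^{m_i})[[t]]$ you rely on, does not follow. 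Once that is supplied your argument closes the gap the paper leaves implicit.
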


Note that Thm.\ \ref{imagethetaA}.ii follows trivially from the  definition of $\Phi$ and $\Phi_A$ as well as from Thm.\ \ref{iso-multiplicative}. While one checks that the image of $\theta_A$ satisfies
\eqref{f:M1} - \eqref{f:M3} for the same reasons as  $\im(\theta)$, condition \eqref{f:M4} would
again require an (integral) logarithm, which due to convergence issues is not available for
$A(\G)$. This is the {\it only reason} to consider also $\theta_B$ and $\Phi_B$ in the following!
Thus we are going to prove the following variant

\begin{theorem}\label{imagethetaB}
\begin{enumerate}
\item[i.] The image of $\theta_B$ is contained in $\Phi_B$.
\item[ii.] Within $\prod_{U\in S(\G,\Z)} B(U^{\ab})^\times$ we have
    \begin{equation*}
        \Phi_B\cap \prod_{U\in S(\G,\Z)} A(U^{\ab})^\times =\Phi_A \ .
    \end{equation*}
\end{enumerate}
\end{theorem}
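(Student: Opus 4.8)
The plan is to establish (i) by repeating, over $B(\G)$, the arguments of Section~\ref{sec:multiplicative} that gave $\im(\theta)\subseteq\Phi$ — the only genuinely new ingredient being the integral logarithm $L_B$ of Section~\ref{sec:log-B} — and to establish (ii) by a faithfully flat descent along $A(\Z)\hookrightarrow B(\Z)$.

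For (i), I would first recall that, by Prop.~\ref{finitefree}, for open subgroups $U\subseteq V$ the ring $B(V)$ is free over $B(U)$ on any coset system of $V/U$, and that each $B(V)$ is local, so $K_1(B(V))=B(V)^\times/[B(V)^\times,B(V)^\times]$. With these two facts, the verification of \eqref{f:M1}, \eqref{f:M2} and \eqref{f:M3} for $\theta_B(x)$ is formally identical to the one for $\theta$, with $\Lambda$ replaced by $B$ throughout: the transitivity-of-norms diagrams, the identity $B(U/[V,V])\otimes_{B(U)}B(V)=B(V^{\ab})$ (obtained from its $\Lambda$-analogue by $-\otimes_{\Lambda(\Z)}B(\Z)$), the conjugation square, and the Leibniz-rule evaluation of $\pr^U_{U^{\ab}}N^V_U\bar x$ all carry over verbatim, using only that $\varphi_\Z$ reduces modulo $p$ to the $p$-th power map on $B(\Z)$ and that $\im(\sigma^V_U)$ is an ideal of $B(U^{\ab})^{V/U}$. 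For \eqref{f:M4} one imitates the $\Lambda$-argument: define $\mathcal{L}_B$ as the obvious $B$-analogue of $\mathcal{L}$ — its defining logarithms converge since $B(U^{\ab})$ is $\pi$-adically complete with $p\in\Jac(B(U^{\ab}))$, so $\log$ maps $1+pB(U^{\ab})$ into $pB(U^{\ab})\otimes_{\mathbb{Z}_p}\mathbb{Q}_p$ — prove the $B$-version $\beta_B\circ L_B=\mathcal{L}_B\circ\theta_B$ of Prop.~\ref{beta-L} via the $B$-analogues of Lemmas~\ref{log} and \ref{beta} and of the trace--log--norm diagram~\eqref{d:trlogN}, invoke Thm.~\ref{iso-B-additive} (so that $\im\beta_B=\Psi_B$) to conclude $\mathcal{L}_{B,V}(\theta_B(x))\in\im(\sigma_V)$ for every $V\in C(\G,\Z)$, and finally feed this through the $B$-analogues of Remark~\ref{eta-A3}.ii.b and Lemma~\ref{add-mult} to obtain \eqref{f:M4}. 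This yields $\im(\theta_B)\subseteq\Phi_B$.

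For (ii), the inclusion $\Phi_A\subseteq\Phi_B\cap\prod_{U\in S(\G,\Z)} A(U^{\ab})^\times$ is immediate from $\Phi_A\subseteq\Phi_B$. Conversely, fix $(x_U)_U\in\Phi_B$ with all $x_U$ in the $A$-rings; I must show that the relations \eqref{f:M1}--\eqref{f:M4} already hold over $A$. Since $A(\Z)$ is a Noetherian local domain (a discrete valuation ring) with completion $B(\Z)$, the map $A(\Z)\to B(\Z)$ is faithfully flat; as each of $A(U^{\ab})$, $A(U/[V,V])$, $A(U)$ is finite free over $A(\Z)$ with its $B$-counterpart obtained by $-\otimes_{A(\Z)}B(\Z)$, we get $A(-)\hookrightarrow B(-)$, and for every finitely generated $A(\Z)$-submodule $M$ of $A(-)$ one has $\bigl(B(\Z)\otimes_{A(\Z)}M\bigr)\cap A(-)=M$ (flatness together with Krull's intersection theorem for $A(\Z)$). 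Now \eqref{f:M1} and \eqref{f:M2} are equalities between elements of the $A$-rings, so they hold there by injectivity of $A(-)\hookrightarrow B(-)$; while \eqref{f:M3} and \eqref{f:M4} assert that certain elements of the $A$-rings, namely $\ver^V_U(x_V)-x_U$ (which lies in $A(U^{\ab})$ since $\ver^V_U$ maps $A(V^{\ab})$ into $A(U^{\ab})$), respectively $\alpha_U(x_U)-\prod_{W\in P_c(U)}\varphi_W(\alpha_W(x_W))\in A(U)$, belong to $\im(\sigma^V_U)$, respectively $p\,\im(\sigma_U)$, computed over $B$. Because forming the image commutes with the flat base change $A(\Z)\to B(\Z)$, those images over $B$ are obtained from the corresponding images over $A$ by $-\otimes_{A(\Z)}B(\Z)$, and the displayed intersection property pushes the membership down to $A$. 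Hence $(x_U)_U\in\Phi_A$.

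The bulk of the work — and essentially the only place where anything beyond formal manipulation enters — is the $B$-version of Prop.~\ref{beta-L}, in particular the $B$-analogue of the trace--log--norm diagram~\eqref{d:trlogN}, which in the $\Lambda$-case rested on the Oliver--Taylor input; this is precisely where the construction of $L_B$ in Section~\ref{sec:log-B} is put to use. Part (ii), by contrast, is a routine descent once the intersection property above is in place.
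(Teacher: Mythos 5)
Your proof is correct and follows the paper's strategy for part (i) essentially verbatim: carry (M1)--(M3) over by replacing $\Lambda$ with $B$ throughout, and for (M4) establish the $B$-analogue $\beta_B\circ L_B=\mathcal{L}_B\circ\theta_B$ of Prop.~\ref{beta-L} (via $B$-versions of Lemma~\ref{log}, Lemma~\ref{beta}, and the trace--log--norm diagram~\eqref{d:trlogN}), then feed $\mathcal{L}_B(\theta_B(x))\in\Psi_B$ through Thm.~\ref{iso-B-additive} and the $B$-analogues of Remark~\ref{eta-A3}.ii.b and Lemma~\ref{add-mult}, exactly as the paper does in Section~\ref{sec:multiplicative-B}. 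For part (ii) the paper merely asserts it ``holds for trivial reasons''; your faithfully flat descent along $A(\Z)\to B(\Z)$ (completion of a discrete valuation ring, combined with the fact that images commute with flat base change and the intersection property $(B(\Z)\otimes_{A(\Z)}M)\cap A(\cdot)=M$ for finitely generated $A(\Z)$-submodules) supplies a correct and explicit justification of exactly the point the paper leaves implicit.
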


Again, Thm.\ \ref{imagethetaB}.ii holds for trivial reasons. Moreover, Thm.\ \ref{imagethetaB}.i
clearly implies Thm.\ \ref{imagethetaA}.i. It is straightforward to check (by the same arguments as
before) that the image of $\theta_B$ satisfies the conditions \eqref{f:M1} - \eqref{f:M3}. In
particular,  upon replacing \eqref{f:decomp} by
\begin{equation*}
    B(V)\cong\bigoplus_{i=0}^{p-1}B(U)g^i
\end{equation*}
and using also
\begin{equation*}
    B(U)\cong \bigoplus_{h\in U/\Z}B(\Z) h
\end{equation*}
the proof for \eqref{f:M3} is literally the same as before.

Before we discuss the condition \eqref{f:M4} we would like to point out the following conclusion.

\begin{corollary}
For any compact $p$-adic Lie group $G$ and $\mathcal{O}$ the ring of integers of a finite unramified
extension of $\mathbb{Q}_p$ the canonical maps
\begin{equation*}
    K_1'(\Lambda(G))\hookrightarrow
K_1'(A(G)) \qquad\text{and}\qquad K_1'(\Lambda(G))\hookrightarrow
K_1'(B(G))
\end{equation*}
are injective.
\end{corollary}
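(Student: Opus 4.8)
The plan is to deduce the statement from the integral logarithm, for which the material of Section~\ref{sec:log-B} (in particular Lemma~\ref{L-L}) was prepared. Since the ring homomorphisms $\Lambda(\G)\to A(\G)\to B(\G)$ carry $SK_1$ into $SK_1$ — indeed $SK_1(A(\G))$ and $SK_1(B(\G))$ were \emph{defined} as the images of $SK_1(\Lambda(\G))$ — the map $K_1'(\Lambda(\G))\to K_1'(B(\G))$ factors through $K_1'(A(\G))$. It therefore suffices to show that $j\colon K_1'(\Lambda(\G))\to K_1'(B(\G))$ is injective; the injectivity of $K_1'(\Lambda(\G))\hookrightarrow K_1'(A(\G))$ then follows for free.

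The tool is the square of Lemma~\ref{L-L}, which — once one notes that $L_\G$ descends to $K_1'(\Lambda(\G))$ by the Oliver--Taylor exact sequence~\eqref{f:exactsequ} and that $L_{B(\G)}$ descends to $K_1'(B(\G))$ — takes the form
\[
\xymatrix{
  K_1'(\Lambda(\G)) \ar[d]_{L_\G} \ar[r]^-{j} & K_1'(B(\G)) \ar[d]^{L_{B(\G)}} \\
  \cO[[\Conj(\G)]] \ar[r]^-{i} & B(\G)/[B(\G),B(\G)] }
\]
and the point is that $i$ is injective. Indeed, by Lemma~\ref{B-comm}.ii the target is $B(\G_0)\otimes_{\Lambda(\G_0)}\cO[[\Conj(\G)]]$, and $\cO[[\Conj(\G)]]\cong\Lambda(\G_0)[\Conj(\G/\G_0)]$ is a \emph{free} $\Lambda(\G_0)$-module, so $i$ is the base change of a free module along the injection $\Lambda(\G_0)\hookrightarrow B(\G_0)$. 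Hence for $x\in\ker(j)$ we get $L_\G(x)=0$, so by~\eqref{f:exactsequ} we may write $x=\iota(w)$ with $w\in\mu(\cO)\times\G^{\ab}$.

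It remains to check that $j\circ\iota$ is injective on $\mu(\cO)\times\G^{\ab}$, and here I would pass to the abelianization. The surjection $\Lambda(\G)\twoheadrightarrow\Lambda(\G^{\ab})$ extends, by base change, to $B(\G)\twoheadrightarrow B(\G^{\ab})$, producing a commutative square relating $j$, the homomorphism $\theta_\G\colon K_1'(\Lambda(\G))\to\Lambda(\G^{\ab})^\times$ induced by $\G\twoheadrightarrow\G^{\ab}$, its evident $B$-analogue $K_1'(B(\G))\to B(\G^{\ab})^\times$, and the inclusion $\Lambda(\G^{\ab})^\times\hookrightarrow B(\G^{\ab})^\times$. (Both $\theta$-maps make sense on the $K_1'$-level: $SK_1(\Lambda(\G))\subseteq\ker(\theta_\G)$ as already observed when defining $\theta\colon K_1'(\Lambda(\G))\to\Phi$, and hence the $B$-analogue also kills $SK_1(B(\G))=\im(SK_1(\Lambda(\G)))$.) The inclusion $\Lambda(\G^{\ab})^\times\hookrightarrow B(\G^{\ab})^\times$ is injective, since $S(\G^{\ab})$ consists of regular elements, whence $\Lambda(\G^{\ab})\hookrightarrow A(\G^{\ab})$, and $A(\G^{\ab})\hookrightarrow B(\G^{\ab})$ by Krull's intersection theorem. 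As recorded just before Lemma~\ref{LG}, $\theta_\G\circ\iota\colon\mu(\cO)\times\G^{\ab}\to\Lambda(\G^{\ab})^\times$ is injective; so, chasing the square, $j\circ\iota$ followed by the $B$-analogue of $\theta_\G$ is injective, whence $j\circ\iota$ is injective. Thus $w=1$, $x=1$, and $\ker(j)=1$.

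Besides this diagram-chasing, the only real input is the integral logarithm $L_{B(\G)}$ together with the compatibility of Lemma~\ref{L-L}; the unramifiedness of $\cO$ is used precisely here (and in~\eqref{f:exactsequ}). Since Section~\ref{sec:log-B} provides $L_{B(\G)}$ when $H$ is finite, the argument is complete for one-dimensional $\G$. I expect the main obstacle to be the passage to a general compact $p$-adic Lie group, where $H$ may be infinite (and where one also wants to drop the pro-$p$ hypothesis and allow $p=2$); the natural route is a limit argument, writing $\Lambda(\G)=\varprojlim_{H_0}\Lambda(\G/H_0)$ over the open subgroups $H_0\unlhd\G$ contained in $H$ — so that each $\G/H_0$ has $H/H_0$ finite — and using that $K_1'$ of an Iwasawa algebra is the projective limit of the $K_1'$ of its finite quotients, so that $\ker(j)\subseteq\varprojlim_{H_0}\ker(j_{\G/H_0})=1$. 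Under the standing hypotheses~\eqref{f:H1}--\eqref{f:H4} of this section one can also argue at once: by Theorem~\ref{iso-multiplicative} $\theta$ identifies $K_1'(\Lambda(\G))$ with $\Phi$, and by Theorems~\ref{imagethetaA} and~\ref{imagethetaB} together with the inclusions $\Phi\subseteq\Phi_A\subseteq\Phi_B$, the composites $K_1'(\Lambda(\G))\to K_1'(A(\G))\xrightarrow{\theta_A}\Phi_A$ and $K_1'(\Lambda(\G))\to K_1'(B(\G))\xrightarrow{\theta_B}\Phi_B$ are injective.
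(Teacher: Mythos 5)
Your main argument is correct but genuinely different from the paper's. The paper simply observes that the square
\begin{equation*}
    \xymatrix{
  K_1'(\Lambda(\G))\ar@{^{(}->}[d]_{\theta} \ar[r] & K_1'(A(\G)) \ar[d]^{\theta_A} \\
  {\Phi} \ar@{^{(}->}[r] & {\Phi_A}   }
\end{equation*}
commutes and that the left vertical map is injective by Theorem~\ref{iso-multiplicative}; this is precisely the ``one can also argue at once'' alternative you sketch at the end. Your primary route instead bypasses $\theta$ and the combinatorial Theorems~\ref{iso-additive} and~\ref{iso-multiplicative} altogether, using only the integral-logarithm package of sections~\ref{sec:multiplicative} and~\ref{sec:log-B}: the square of Lemma~\ref{L-L}, the exact sequence~\eqref{f:exactsequ}, and the observation that $i\colon\cO[[\Conj(\G)]]\to B(\G)/[B(\G),B(\G)]$ is injective (via Lemma~\ref{B-comm}.ii and freeness of $\cO[[\Conj(\G)]]$ over $\Lambda(\G_0)$), reducing you to killing the $\mu(\cO)\times\G^{\ab}$-part by abelianization and the injectivity of $\Lambda(\G^{\ab})^\times\hookrightarrow B(\G^{\ab})^\times$. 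Each step checks out, and the base case is handled under exactly the hypotheses of section~\ref{sec:log-B}. What your route buys is independence from the description of $\Phi$ --- you only need the Oliver--Taylor exact sequence and Lemma~\ref{L-L}, not Theorem~\ref{iso-multiplicative} --- which is a cleaner logical dependency; what it costs is the extra diagram chase through $\G^{\ab}$, and in the paper's context the $\theta$-argument is free anyway since Theorem~\ref{iso-multiplicative} is already proved. One small imprecision in your passage to general $G$: the relevant limit is over one-dimensional quotients $\G/H_0$ (with $H/H_0$ finite), not over finite quotients, and the needed compatibility $K_1'(\Lambda(G))\cong\varprojlim K_1'(\Lambda(G/H_0))$ is what \cite{FK} Prop.~1.5.1 and \cite{SV2} Cor.~3.2 supply; the further reduction dropping~\eqref{f:H2} and $p\neq 2$ is, as you note and as the paper does, deferred to \cite{Su}.
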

\begin{proof}
Assuming first that $G=\G$ is pro-$p$ of dimension one, the claim follows from the following
commutative diagram
\begin{equation*}
    \xymatrix{
  K_1'(\Lambda(\G))\ar@{^{(}->}[d]_{\theta} \ar[r]^{ } & K_1'(A(\G)) \ar[d]^{\theta_A} \\
  {\Phi} \ar@{^{(}->}[r]^{ } & {\Phi_A}.   }
\end{equation*}
In the general pro-$p$ case we may express $\Lambda(G)$ as an inverse limit of Iwasawa algebras of groups $\G$ of dimension one which by \cite{FK} Prop.\ 1.5.1 and \cite{SV2} Cor.\ 3.2 gives rise to a commutative diagram
\begin{equation*}
    \xymatrix{
    K_1'(\Lambda(G)) \ar[d] \ar[r]^{\cong} & {\varprojlim  K_1'(\Lambda(\G))} \ar@{^{(}->}[d] \\
    K_1'(A(G)) \ar[r] & {\varprojlim  K_1'(A(\G))}}
\end{equation*}
and to a corresponding one for $K'_1(B(G))$. The claim follows. For general $G$ one uses the same reduction steps as discussed in \cite{Su}.
\end{proof}

In order to show that the image of $\theta_B$ satisfies \eqref{f:M4} we need the commutativity  of the diagram
\begin{equation}
\label{d:comm-L-L-B} \xymatrix{
  K_1(B(\G)) \ar[d]_{\theta_B} \ar[r]^-{L_B} & B(\G)/[B(\G),B(\G)] \ar[d]^{\beta_B} \\
  [\prod_{U\in S(\G,\Z)} B(U^{\ab})^\times]_{(M3a)} \ar[r]^-{\mathcal{L}_B} & {\prod\limits_{U\in S(\G,\Z)} B(U^{\ab})\otimes_{\mathbb{Z}_p}\mathbb{Q}_p} ,  }
\end{equation}
where $[\ldots]_{(M3a)}$ indicates, as before, the subgroup of elements satisfying \eqref{f:M3a} and where $\mathcal{L}_B =(\mathcal{L}_{B,V})_{V\in S(\G,\Z)}$ is defined in the same manner as earlier the map $\mathcal{L}$:
\begin{equation*}
    \mathcal{L}_{B,V}((y_U)_U) := \frac{1}{p^2|V/\Z|}\log \big(
\frac{y_V^{p^2|V/\Z|}}{\varphi_\Z(y_\Z^p)\prod_{W\in P(V)} \varphi_W(\alpha_W(y_W))^{|W/\Z|}} \big)
\ .
\end{equation*}

By $\Lambda(\Z)$-linearity the map $\eta_U$ extends to a $B(\Z)$-linear map
\begin{equation*}
    \eta_U: B(U)\to
B(U) \ .
\end{equation*}

\begin{lemma}\label{logB}
\begin{enumerate}
\item[i.] For any open subgroups $U \subseteq V \subseteq \G$, we have the commutative diagram
\begin{equation*}
    \xymatrix{
       \frac{1 + pB(V)}{[1+pB(V),B(V)^\times]} \ar[d]_{N^V_U} \ar[r]^-{\log} & B(V)/[B(V),B(V)] \otimes_{\mathbb{Z}_p}\mathbb{Q}_p \ar[d]^{\tr^V_U} \\
       \frac{1 + pB(U)}{[1+pB(U),B(U)^\times]} \ar[r]^-{\log} & B(U)/[B(U),B(U)] \otimes_{\mathbb{Z}_p}\mathbb{Q}_p .  }
\end{equation*}
\item[ii.]
For any $U \neq \Z$ in $C(\G,\Z)$ the diagram
\begin{equation*}
    \xymatrix{
       1 + pB(U)  \ar[d]_{\alpha_U} \ar[r]^{\log} & B(U) \otimes_{\mathbb{Z}_p}\mathbb{Q}_p \ar[d]^{p \eta_U} \\
       1 + pB(U) \ar[r]^{\log} & B(U) \otimes_{\mathbb{Z}_p}\mathbb{Q}_p   }
\end{equation*}
is commutative.
\end{enumerate}
\end{lemma}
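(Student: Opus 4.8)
The plan is to establish (i) by the matrix argument that underlies \eqref{d:trlogN}, and then to deduce (ii) from (i) exactly as Lemma \ref{log} was deduced from \eqref{d:trlogN}.

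For (i): by Prop.\ \ref{finitefree} the ring $B(V)$ is free of rank $n := [V:U]$ as a right $B(U)$-module, a basis being given by a system of representatives of the left cosets of $U$ in $V$. For $x \in B(V)$ let $M_x \in M_n(B(U))$ be the matrix of left multiplication by $x$ in this basis; then $x \mapsto M_x$ is a continuous ring homomorphism, $N^V_U(x) = \det M_x$ in $K_1(B(U))$ by definition of the norm map, and $x \mapsto \sum_i (M_x)_{ii}$ induces, modulo commutators, precisely $\tr^V_U$ — this is checked on group elements, exactly as in the identification of $\tr^V_U$ in Section \ref{sec:additive} and of $N^V_U$ with $\det M_g$ in Section \ref{sec:multiplicative}. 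For $x \in 1+pB(V)$ one has $M_x \in 1 + pM_n(B(U))$; since $B(U)$ is $p$-adically complete the series $\log M_x$ converges and equals $M_{\log x}$, and the formal determinant--trace identity $\log\det M \equiv \tr(\log M) \bmod [B(U),B(U)]$, valid for $M \equiv 1 \bmod pM_n(B(U))$ by the same computation as in \cite{OT}, resp.\ \cite{CR} \S54, gives
\begin{equation*}
    \log N^V_U(x) = \log\det M_x = \tr(\log M_x) = \tr^V_U(\log x) \ .
\end{equation*}
It remains to observe the routine points that $N^V_U$ carries $1+pB(V)$ into $1+pB(U)$ and kills $[1+pB(V),B(V)^\times]$, so that the diagram is well defined.

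For (ii): let $U' \subseteq U$ be the subgroup of index $p$. For $f \in 1+pB(U)$ one computes, using part (i) for the pair $U' \subseteq U$,
\begin{equation*}
    \log(\alpha_U(f)) = p\log f - \log\bigl(N^U_{U'}(f)\bigr) = p\log f - \tr^U_{U'}(\log f) = p\bigl(\id - \tfrac{1}{p}\tr^U_{U'}\bigr)(\log f) \ ,
\end{equation*}
and then invokes the identity $\id - \tfrac1p\tr^U_{U'} = \eta_U$ of $B(\Z)$-linear endomorphisms of $B(U) = \bigoplus_{g \in U/\Z} B(\Z)g$, which follows --- just as in the proof of Lemma \ref{log} --- from Remark \ref{trace}.ii, both sides being $B(\Z)$-linear and agreeing on the basis $U/\Z$.

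The only real obstacle should be (i), where the substance is exactly as for \eqref{d:trlogN} in \cite{OT}: identifying $N^V_U$ and $\tr^V_U$ with the determinant and the trace of the multiplication matrix, together with the determinant--trace identity. Passing from a group ring to the pseudocompact ring $B(U)$ costs nothing once one knows --- as arranged in Section \ref{sec:log-B} --- that $\log$ converges on $1+pB(U)$, which holds since $p$ is a uniformizer of $\cO$ and $B(U)$ is $p$-adically complete. Part (ii) is then a purely formal consequence.
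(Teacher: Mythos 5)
Your argument is correct and coincides with the paper's own proof, which is stated tersely: adapt the first step of \cite{OT} Thm.\ 1.4 (the determinant--trace identity for matrices congruent to $1$ modulo $p$, applied to the matrix of left multiplication) to obtain (i), and then deduce (ii) formally from (i) exactly as Lemma \ref{log} was deduced from \eqref{d:trlogN}. You have merely spelled out the details that the paper delegates to the reader, including the observation that for finite $H$ the ring $B(U)$ is $p$-adically complete so $\log$ converges on $1+pB(U)$, and the verification that the trace of $M_{\log x}$ computes $\tr^V_U(\log x)$ modulo commutators.
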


\begin{proof}
ii.\ follows formally from i.\ in the same way as Lemma \ref{log} follows from \eqref{d:trlogN}. For i.\ one has to adapt the first step in the proof of \cite{OT} Thm.\ 1.4.
\end{proof}

\begin{remark}\label{betaB}
By $B(\Z)$-(semi)linearity the identity in Lemma \ref{beta} remains valid for any $f \in B(\G)/[B(\G),B(\G)] \otimes_{\mathbb{Z}_p}\mathbb{Q}_p$ and any $U \in S(\G,\Z)$.
\end{remark}

To establish the commutativity of \eqref{d:comm-L-L-B} we have to show that
\begin{equation*}
    \mathcal{L}_B(\theta_B(x)) = \beta_B(L_B(x)) \qquad\text{for any $x \in K_1(B(\G))$}
\end{equation*}
holds true. Because of Lemma \ref{units} it suffices to treat the two special cases $x \in 1+pB(\G)$ and $x \in B(\Z)^\times$. In the first case the computation is formally the same as the computation in the proof of Prop.\ \ref{beta-L}, now using \eqref{d:comm-L-L-B}, Lemma \ref{logB}, and Remark \ref{betaB} of course. Let therefore $x \in B(\Z)^\times$. Then $N^\G_U(x) = x^{|\G/U|}$ for any $U \in S(\G,\Z)$ and $\alpha_W(x) = \frac{x^p}{N^W_{W'}(x)} = \frac{x^p}{x^p} = 1$ for any $W \in P(U)$. Hence we obtain $\theta_B(x) = (x^{|\G/U|})_U$ and
\begin{align*}
    \mathcal{L}_B(\theta_B(x)) & = ( \tfrac{1}{p^2|V/\Z|}\log \big(
\frac{x^{p^2|\G/\Z|}}{\varphi_\Z(x^{p|\G/\Z|})\prod_{W\in P(V)} \varphi_W(\alpha_W(x))^{|\G/\Z|}} \big) )_V \\
& = ( \tfrac{|\G/V|}{p} \log \big( \frac{x^p}{\varphi_\Z(x)} \big) )_V \\
& = ( |\G/V| L_B(x) )_V \\
& = \beta_B(L_B(x)) \ .
\end{align*}

To see that $\theta_B(x)$ satisfies \eqref{f:M4} we observe that due to Thm.\ \ref{iso-B-additive} and the commutativity of \eqref{d:comm-L-L-B} the element $\mathcal{L}_B(\theta_B(x))=\beta_B(L_B(x))$ satisfies \eqref{f:A1} and \eqref{f:A3}. We further note that the obvious analogs of Remark \ref{eta-A3}.ii.b and Lemma \ref{add-mult} remain true in the present setting. Hence we may argue exactly as we did earlier (before Lemma \ref{calL-Phi}) for $\theta(x)$.

This finishes the proof of Theorems \ref{imagethetaB} and \ref{imagethetaA}.

\newpage


\end{document}